\numberwithin{equation}{section}
\renewcommand{\Re}{\mathop{\rm Re}}
\renewcommand{\Im}{\mathop{\rm Im}}
\newcommand{\R}{\mathfrak{t}}
\newcommand{\s}{\mathfrak{s}}
\newcommand{\p}{\mathfrak{p}}
\newcommand{\C}{\mathbb{C}}
\newcommand{\N}{\mathbb{N}}
\newcommand{\M}{\mathcal{M}}
\renewcommand{\Re}{\mathop{\rm Re}}
\renewcommand{\Im}{\mathop{\rm Im}}
\newtheorem{theorem}{Theorem}[section]
\newtheorem{lemma}[theorem]{Lemma}
\newtheorem{corollary}[theorem]{Corollary}
\newtheorem{remark}[theorem]{Remark}
\renewcommand{\p@subsection}{}
\renewcommand{\p@subsubsection}{}
\begin{document}

\title{Differential equations for a class of  semiclassical orthogonal polynomials on the unit circle}

\author{Cleonice F. Bracciali} 
\email{cleonice.bracciali@unesp.br}
\affiliation{Departamento de Matemática, IBILCE, UNESP-Universidade Estadual Paulista, São José do Rio Preto, 15054-000, SP, Brazil.}

\author{Karina S. Rampazzi} 
\email{karina.rampazzi@unesp.br}
\affiliation{Faculdade SESI de Educação, Sorocaba, 18040-350, SP, Brazil.}

\author{Luana L. Silva Ribeiro}
\email{luana.ribeiro@unifei.edu.br (Corresponding author)}
\affiliation{Instituto de Ciências Puras e Aplicadas, UNIFEI-Universidade Federal de Itajubá, Itabira, 35903-087, MG, Brazil}

\begin{abstract}
We consider semiclassical orthogonal polynomials on the unit circle associated with a weight
function that satisfy a Pearson-type differential equation involving two polynomials of degree at most three. Structure relations and difference equations for these orthogonal polynomials are found, and, as a consequence, explicit first and second order differential equations are derived. Among the applications, differential equations for a family of polynomials  that generalizes the Jacobi polynomials on the unit circle and the modified Bessel polynomials are established. It is also shown that in some cases the Verblunsky coefficients satisfy a discrete Painlevé II equation.
\vspace{0.5cm}

\textbf{Keywords:} Orthogonal polynomials on the unit circle,  differential equation, difference equation, discrete Painlevé II equation.
\vspace{0.2cm}

\textbf{2020 MSC:} 42C05, 33C47. 
\end{abstract}

\maketitle

\section{Introduction}
\label{sec1}
Let $\mu$ be a positive measure on the unit circle $\mathbb{T}=\{z\in\C:|z|=1\}$, parametrized by $z=e^{i\theta}$, and let $\{\Phi_n\}_{n\in\N}$ be the sequence of \textit{monic orthogonal polynomials on the unit circle} (MOPUC) satisfying the orthogonality condition $ \langle \Phi_n,\Phi_m \rangle =\kappa^{-2}_n\delta_{m,n}$, $m,n\in\mathbb{Z}_+=\{0,1,2,\ldots\}$, where
\begin{align}\label{General-Orthogonality}
    \langle f,g \rangle =\int_{0}^{2\pi}f(e^{i\theta})\overline{g(e^{i\theta})}\mbox{d}\mu(e^{i\theta})
\end{align}
is the standard inner product on $\mathbb{T}$, see \cite{Simon2005}. In this work we adopt the following notation for the MOPUC, 
\begin{equation}\label{monic-OPUC}
    \Phi_n(z)=z^n+\ell_{n,n-1} z^{n-1}+\ell_{n,n-2} z^{n-2}+\cdots +\ell_{n,1} z +\ell_{n,0}, \ n\in \mathbb{Z}_+,
\end{equation}
where $\alpha_{n-1}=-\overline{\ell}_{n,0}$ is the Verblunsky coefficient with  $\alpha_{-1}=-1$. Also $\kappa_n^2/\kappa_{n+1}^2=1-|\alpha_n|^2.$ These polynomials satisfy the Szeg\H{o}'s  recurrences 
\begin{equation}\label{Szego-recurrence}
\begin{aligned}
    &\Phi_{n}(z) = z\Phi_{n-1}(z)-\overline{\alpha}_{n-1} \Phi_{n-1}^{\ast}(z), \\ 
    &\Phi_{n}(z) = (1 - |\alpha_{n-1}|^2) z\Phi_{n-1}(z)  - \overline{\alpha}_{n-1} \Phi_{n}^{\ast}(z),
\end{aligned}
\end{equation}
where $\Phi_n^{\ast}(z)=z^n\overline{\Phi_{n}(1/\overline{z})}$ is the reversed polynomial. The above recurrences imply 
\begin{equation}
\begin{aligned} \label{prop-coef-n-1}
    & \ell_{n,1}=-(\overline{\alpha}_{n-1}\overline{\ell}_{n-1,n-2}+\overline{\alpha}_{n-2})=-[(1-|\alpha_{n-1}|^2)\overline{\alpha}_{n-2}+\overline{\alpha}_{n-1}\overline{\ell}_{n,n-1}], \ n\geqslant 1,  \\
    & \ell_{n,n-1}=\ell_{n-1,n-2}+\overline{\alpha}_{n-1}\alpha_{n-2}, \ n\geqslant 1 \ \mbox{with} \ \ell_{0,-1}=0,  \\
    & \ell_{n,n-2}=\ell_{n-1,n-3}-\overline{\alpha}_{n-1}\overline{\ell}_{n-1,1},  \ n\geqslant 2 \ \mbox{and} \ \ell_{1,-2}=0.
\end{aligned}
\end{equation}

One goal of this work is to establish structure relations (or differential-recurrence relations) for orthogonal polynomials on the unit circle. These are finite relations involving the polynomials and their derivatives. Such relations appear in the theory of (classical and semiclassical) orthogonal polynomials on the real line  and also on the unit circle \cite{bonan1987orthogonal,branquinho2012structure,ISMAIL-JAT_2001,MARCELLAN2007537,maroni1991une,vanassche2018orthogonal}. We also cite applications in the framework of Sobolev inner products with
coherent pairs of measures \cite{DEJESUS2008482,iserles1991sobolev,marcellan2017sobolev}.

In general the classification of semiclassical orthogonal polynomials on the unit circle is based on semiclassical functionals \cite{AlfaroMarcellan1991,CachafeiroSuarez1997,Magnus2013}. Specifically, one considers the class of measures satisfying  $d\mu(e^{i\theta})=w(\theta)d\theta$, where $w$ is a positive \textit{semiclassical weight function}, i.e., $w$ satisfies the Pearson-type differential equation
\begin{equation}\label{Eq-Tipo-Pearson-1}
\frac{d}{d\theta} \left[ A (e^{i\theta})  w(\theta) \right]  = B(e^{i\theta})  w(\theta), \  0 \leqslant \theta \leqslant 2\pi.
\end{equation}
Here $A(z)$ and $B(z)$ are Laurent polynomials and $A(e^{i\theta})=0$ at singular points of $1/w$ (see \cite{AlfaroMarcellan1991,Magnus2013}).  
Furthermore, if $A(z)$ and $B(z) $ are polynomials and $w$ satisfy \eqref{Eq-Tipo-Pearson-1}, it is said that $w$ belongs to the class $(p,q)$ if $\deg A(z)=p$ and 
$\max\{p-1,\deg((p-1)A(z)+iB(z))\}=q$ \cite{CachafeiroSuarez1997,Suarez2001}. Note that the polynomials $A(z)$ and $B(z)$ are not unique. In fact, if the weight function belongs to the class $(p,q)$, it also belongs to the class $(p+1,q+1)$.

In this work the structure relations and difference equations for semiclassical MOPUC studied in \cite{Bracciali-Rampazzi-SRibeiro-JAT2023} are extended to the case where $A(z)$ and $B(z)$ are complex polynomials of degree at most three:
\begin{align}\label{defin-A-B-grau3}
    A(z)=a_{3}z^3+a_{2}z^2+a_{1}z+a_{0} \ \ \mbox{and} \ \ B(z)=b_{3}z^3+b_{2}z^2+b_{1}z+b_{0}.
\end{align}
As consequence, we obtain explicit first and second order differential equations for the related semiclassical polynomials. We note that differential equations for the MOPUC are an important tool for their description, and a method to find such equations can be found, for instance, in \cite{ISMAIL-JAT_2001}. We also cite \cite{Branquinho2009}, where the authors identify differential equations for the MOPUC using the Carathéodory function of the corresponding measure.
Our findings for the first and second order differential equations are obtained using a different approach from the ones described in \cite{ISMAIL-JAT_2001} and \cite{Branquinho2009}, and the equations are explicitly given in terms of the polynomials $A(z)$, $B(z)$, and the coefficients $\ell_{n,k}$ defined in \eqref{monic-OPUC}.

Furthermore, our results are used to find difference equations for the Verblunsky coefficients, including the discrete Painlevé II equation
\begin{align}\label{tipo-painleve-absolute-value}
\alpha_n+\alpha_{n-2}=-\frac{2}{t}\frac{[\lambda +(\lambda+n)\alpha_{n-1}]}{1-\alpha_{n-1}^2}, \ \ n \geqslant 2,
\end{align} 
 where $t\neq 0$, $\lambda> -1/2$. Equation \eqref{tipo-painleve-absolute-value} arises when one considers MOPUC associated with the weight function $w(\theta)=e^{t\cos(\theta)}[\sin^2(\theta/2)]^{\lambda}$. If $\lambda=0$, this finite difference relation appeared in \cite{periwal1990unitary} in connection to modified Bessel polynomials \cite{ISMAIL-JAT_2001,vanassche2018orthogonal}.
 Further results involving orthogonal polynomials and Painlevé equations (continuous and discrete) can be found for instance in \cite{Filipuk2018,MAGNUS-1995,Magnus1999,vanassche2018orthogonal} and references therein. Discrete Painlevé equations are also found in quantum gravity and random partitions models \cite{fokas1991discrete,ChouteauTarricone2023}.

Among the applications, novel differential equations for families of semiclassical orthogonal polynomials are found, including
the MOPUC with respect to the weight function
\begin{equation*}
w(\theta)= e^{-\eta \theta}[\sin^2(\theta/2)]^{\lambda}[\cos^2(\theta/2)]^{\beta}, \  0 \leqslant \theta \leqslant 2\pi,
\end{equation*}
where $\eta\in\mathbb{R}$, $\lambda>-1/2$ and $\beta>-1/2$. This measure gives rise to several known examples of MOPUC. When $\beta=0$, the corresponding polynomials were considered in \cite{Ranga-PAMS2010}, and if $\eta=0$ the Jacobi polynomials on the unit circle \cite{ISMAIL-JAT_2001,badkov1987systems} are found. Also, when $\eta=\beta=0$ we obtain the well-known weight function of the circular Jacobi polynomials \cite{ISMAIL-JAT_2001}. The case where $\beta\lambda\eta\neq 0$ was considered in \cite{Bracciali-Rampazzi-SRibeiro-JAT2023}.

This work is organized as follows. Section \ref{sec2} presents the structure relations for the MOPUC and difference equations for the Verblunsky coefficients. In Section \ref{Sec3} first and second order differential equations for $\Phi_n$ and $\Phi_n^*$ are presented. Section \ref{Sec4} contains applications of the results by presenting differential equations for several families of MOPUC. Finally, Section \ref{final-section} closes the analysis with final remarks and numerical simulations to generate the Verblunsky coefficients.

\section{Structure relations for the MOPUC in the class $(p,q)$ with $p,q \leqslant 3$}
\label{sec2}

Let $A(z)$ e $B(z)$ be (complex) polynomials of degree at most $d\geqslant 1$ and $w$ be such that \eqref{Eq-Tipo-Pearson-1} holds. A straightforward use of integration properties and \eqref{Eq-Tipo-Pearson-1} imply that 
\begin{align}\label{Eq-A-phi-1}
\langle A(z)\Phi_n',z^{k} \rangle & =\langle [iB(z)+(k+1)A(z)]\Phi_{n},z^{k+1} \rangle - i\Phi_n(1)A(1)[w(2\pi)-w(0)]. 
\end{align}
This type of property was explored in \cite{Magnus2013} when $w(2\pi)=w(0)$ and it was recently explored in \cite{Bracciali-Rampazzi-SRibeiro-JAT2023} for $d=2$. If $d=3$ we obtain
\begin{align}\label{Eq-A-phi-2}
\langle A(z)\Phi_n',z^{k} \rangle=- i\Phi_n(1)A(1)[w(2\pi)-w(0)], \ \ k=2,\ldots,n-2.
\end{align}
This is the basic relation to establish a structure relation for MOPUC in Theorem \ref{Thm1-main}.  

\begin{theorem}\label{Thm1-main} Let $w$ be a weight function for which \eqref{Eq-Tipo-Pearson-1} holds with $A(z)$ and $B(z)$ as in \eqref{defin-A-B-grau3}. If $A(1)[w(2\pi)-w(0)]=0$ then the associated monic orthogonal polynomials satisfy the structure relation for $n\geqslant 3$
\begin{equation*}
A(z)\Phi_{n}'(z)=na_{3}\Phi_{n+2}(z)+\s_{n,n+1}\Phi_{n+1}(z)+\s_{n,n}\Phi_{n}(z)+\s_{n,n-1}\Phi_{n-1}(z)+[\p_{n,n}z+\R_{n,n}]\Phi_{n}^{\ast}(z),  
\end{equation*}
where 
\begin{eqnarray*} 
\p_{n,n} &=& (ib_3+2a_3) \overline{\alpha}_n,   \quad 
 \label{pnn} \\[0.5ex] 
\R_{n,n} &=& (ib_3+a_3) \overline{\alpha}_{n+1} (1 - |\alpha_n|^2) + (ib_2+a_2 + a_3\ell_{n+1,n} ) \overline{\alpha}_{n}, 
\label{tnn}  \\[0.5ex] 
\s_{n,n-1} &=&    (ib_0+na_0)  (1 - |\alpha_{n-1}|^2), 
\label{snn-1} \\[0.5ex] 
\s_{n,n} &=&  ib_1+(n+1)a_1 - a_0\overline{\ell}_{n,n-1} - [ib_0+(n+1)a_0]  \alpha_n \overline{\alpha}_{n-1}, 
\label{snn}  \\[0.5ex]
\s_{n,n+1} &=& n a_2  + a_3[(n-1)\ell_{n,n-1}  -n \ell_{n+2,n+1}]  +(ib_3+2a_3) \overline{\alpha}_n\alpha_{n-1}. 
\label{snn+1-1form}   
\end{eqnarray*}
Here $\alpha_n$ is the Verblunsky coefficient and $\ell_{i,i-1}$, $i=n+2,n+1,n$, are as in \eqref{monic-OPUC}. 
\end{theorem}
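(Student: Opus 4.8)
The plan is to expand $A(z)\Phi_n'(z)$ in the basis of monic polynomials and pin down the coefficients using the orthogonality machinery set up in \eqref{Eq-A-phi-1} and \eqref{Eq-A-phi-2}. First I would observe that $A(z)\Phi_n'(z)$ is a polynomial of degree $n+2$ (with leading coefficient $na_3$ from the $a_3z^3$ term times $nz^{n-1}$), so it admits a finite expansion
\begin{equation*}
A(z)\Phi_n'(z) = \sum_{k=0}^{n+2} c_{n,k}\Phi_k(z).
\end{equation*}
The key structural input is \eqref{Eq-A-phi-2}: since $\langle A(z)\Phi_n', z^k\rangle = 0$ for $k = 2,\ldots, n-2$ (using the hypothesis $A(1)[w(2\pi)-w(0)]=0$), and since $\{\Phi_0,\ldots,\Phi_{n-2}\}$ spans the same space as $\{1,z,\ldots,z^{n-2}\}$, the coefficients $c_{n,k} = \kappa_k^2\langle A(z)\Phi_n',\Phi_k\rangle$ vanish for $k=2,\ldots,n-2$. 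This kills all but six terms: $c_{n,n+2},c_{n,n+1},c_{n,n},c_{n,n-1},c_{n,1},c_{n,0}$. The next step is to recombine the two leftover low-degree terms $c_{n,1}\Phi_1 + c_{n,0}\Phi_0$ into the stated form $[\p_{n,n}z + \R_{n,n}]\Phi_n^*(z)$; this works because $\Phi_n^*(z)$ is a polynomial of degree $n$ with $\Phi_n^*(0) = 1$, so $z\Phi_n^*$ and $\Phi_n^*$ together span (with $\Phi_n,\ldots$ already accounted for) the remaining two-dimensional freedom — equivalently, one checks $z\Phi_n^*$ and $\Phi_n^*$ are, modulo $\mathrm{span}\{\Phi_2,\ldots,\Phi_{n+1}\}$, independent combinations of $1$ and $z$. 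Concretely I would extract $\p_{n,n}$ and $\R_{n,n}$ by comparing the coefficients of $z^0$ and $z^1$ on both sides, using \eqref{prop-coef-n-1} to express everything through the Verblunsky coefficients.

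Having isolated the six surviving coefficients, the remaining work is to evaluate them explicitly. For the top coefficient $c_{n,n+2} = na_3$ one just matches leading terms. For $c_{n,n+1} = \s_{n,n+1}$, $c_{n,n}=\s_{n,n}$, and $c_{n,n-1}=\s_{n,n-1}$ I would use \eqref{Eq-A-phi-1} with $k=n-1,n,n+1$ together with the orthogonality: e.g. taking $k=n-1$ in \eqref{Eq-A-phi-1} gives $\langle A\Phi_n', z^{n-1}\rangle = \langle [iB+nA]\Phi_n, z^n\rangle$ (the boundary term vanishes by hypothesis), and the right side is computed by reading off the relevant $\ell_{n,\cdot}$ coefficients of $\Phi_n$ — producing the $(ib_0+na_0)(1-|\alpha_{n-1}|^2)$ expression for $\s_{n,n-1}$ once one recalls $\kappa_n^2/\kappa_{n-1}^2 = 1-|\alpha_{n-1}|^2$ and $\ell_{n,0}=-\overline{\alpha}_{n-1}$. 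Similarly $k=n$ and $k=n+1$ feed $\s_{n,n}$ and $\s_{n,n+1}$, the latter also requiring the three-term data $\ell_{n,n-1}$, $\ell_{n+2,n+1}$ from \eqref{prop-coef-n-1}. For $\p_{n,n}$ and $\R_{n,n}$, rather than using inner products one compares the lowest-order Taylor coefficients at $z=0$: the left side $A(z)\Phi_n'(z)$ near $z=0$ contributes $a_0\ell_{n,1} + O(z)$ at order $z^0$, etc., and the right side's contributions from $\Phi_{n+1},\Phi_n,\Phi_{n-1}$ at orders $z^0,z^1$ involve $\ell_{n\pm1,0},\ell_{n\pm1,1}$, which \eqref{prop-coef-n-1} converts into the $\overline{\alpha}$-expressions displayed. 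Throughout, Szeg\H{o}'s recurrences \eqref{Szego-recurrence} are the tool for rewriting $\Phi_n^*$-terms and for the identities among the $\ell$'s.

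I expect the main obstacle to be the bookkeeping in the low-order part: extracting $\p_{n,n}$ and $\R_{n,n}$ cleanly requires juggling the coefficients $\ell_{n,0},\ell_{n,1},\ell_{n\pm1,0},\ell_{n\pm1,1}$ and the first two relations in \eqref{prop-coef-n-1}, and getting the combination $(ib_3+2a_3)\overline{\alpha}_n$ versus $(ib_3+a_3)\overline{\alpha}_{n+1}(1-|\alpha_n|^2)+(ib_2+a_2+a_3\ell_{n+1,n})\overline{\alpha}_n$ in exactly the stated form is delicate — in particular one must be careful that $A(z)\Phi_n'(z)$ has no constant term issues and that the decomposition into $z\Phi_n^* + \Phi_n^*$ is forced (uniqueness of the expansion). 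The computation of $\s_{n,n+1}$ is also somewhat involved because the $a_3z^3$ term interacts with $\Phi_n'$ at two orders ($nz^{n+2}$ and $(n-1)\ell_{n,n-1}z^{n+1}$), and one has to combine this with the degree-lowering contribution coming from re-expanding $\Phi_{n+2}$ via its leading coefficients — this is where the term $a_3[(n-1)\ell_{n,n-1} - n\ell_{n+2,n+1}]$ comes from. Once these identifications are made, verifying that the five displayed formulas are equivalent to the raw coefficients is routine algebra with \eqref{Szego-recurrence} and \eqref{prop-coef-n-1}.
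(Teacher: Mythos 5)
Your overall strategy (expand $A\Phi_n'$, use \eqref{Eq-A-phi-1}--\eqref{Eq-A-phi-2} together with orthogonality to kill the middle coefficients, then evaluate the survivors) has the same shape as the paper's argument, but the central step as you state it does not hold. From \eqref{Eq-A-phi-2} you only know that $\langle A\Phi_n',z^k\rangle=0$ against the \emph{monomials} $z^k$ with $k=2,\dots,n-2$; this does not give $\langle A\Phi_n',\Phi_k\rangle=0$ for $k=2,\dots,n-2$, because $\Phi_k=z^k+\cdots+\ell_{k,1}z+\ell_{k,0}$ has components along $1$ and $z$, and $\langle A\Phi_n',1\rangle$, $\langle A\Phi_n',z\rangle$ do not vanish. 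Concretely, $c_{n,k}=\kappa_k^2\bigl[\overline{\ell}_{k,1}\langle A\Phi_n',z\rangle+\overline{\ell}_{k,0}\langle A\Phi_n',1\rangle\bigr]$ for $2\leqslant k\leqslant n-2$, which is generically nonzero (note $\ell_{k,0}=-\overline{\alpha}_{k-1}$). The span argument you invoke would require vanishing against \emph{all} of $1,z,\dots,z^{n-2}$, which you do not have. Your second step inherits the same problem: $c_{n,1}\Phi_1+c_{n,0}\Phi_0$ has degree at most $1$ and cannot equal the degree-$(n+1)$ polynomial $[\p_{n,n}z+\R_{n,n}]\Phi_n^{\ast}$; rewriting it ``modulo $\mathrm{span}\{\Phi_2,\dots,\Phi_{n+1}\}$'' reintroduces contributions to exactly the coefficients $c_{n,2},\dots,c_{n,n-2}$ that you just declared to be zero, and you never verify that these cancel against the nonzero values above. (They do cancel, but showing this is essentially the whole content of the proof, not a routine recombination.)

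The repair is the paper's ordering of the argument: subtract $na_3\Phi_{n+2}+[\p_{n,n}z+\R_{n,n}]\Phi_n^{\ast}$ \emph{first}, write the remainder (degree $\leqslant n+1$) as $\sum_{j=0}^{n+1}\s_{n,j}\Phi_j$, and \emph{choose} $\p_{n,n}=\langle A\Phi_n',z\rangle/\langle\Phi_n^{\ast},1\rangle$ and $\R_{n,n}$ via $(\R_{n,n}-\p_{n,n}\ell_{n+1,n})\langle\Phi_n,\Phi_n\rangle=\langle A\Phi_n',1\rangle$ precisely so that $\s_{n,0}=\s_{n,1}=0$, using $\langle\Phi_n^{\ast},1\rangle=\langle\Phi_n,\Phi_n\rangle$ and $\langle z\Phi_n^{\ast},1\rangle=-\ell_{n+1,n}\langle\Phi_n,\Phi_n\rangle$. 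Only after that does pairing with $z^k$, $k=2,\dots,n-2$, successively force $\s_{n,2}=\cdots=\s_{n,n-2}=0$, since the $\Phi_n^{\ast}$-terms are orthogonal to these $z^k$ and \eqref{Eq-A-phi-2} annihilates the left-hand side. Your evaluation of $\s_{n,n-1}$ and $\s_{n,n}$ via \eqref{Eq-A-phi-1} at $k=n-1,n$, and of $\s_{n,n+1}$ via $\langle A\Phi_n',\Phi_{n+1}\rangle$, is essentially what the paper does and is fine once the decomposition is legitimately established; but the Taylor-coefficient route to $\p_{n,n}$ and $\R_{n,n}$ is circular as a way of \emph{proving} the relation, since it presupposes that an identity of the stated form already holds.
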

\begin{proof}
The polynomial
$
A(z)\Phi_{n}'(z) - n a_3\Phi_{n+2}(z) -\p_{n,n} z\Phi_{n}^{\ast}(z) - \R_{n,n} \Phi_{n}^{\ast}(z)
$
has degree at most $n+1$ and thus it can be written as
\begin{align}\label{defAdevphi}
A(z)\Phi_{n}'(z) - n a_3 \Phi_{n+2}(z) -\p_{n,n} z\Phi_{n}^{\ast}(z) - \R_{n,n} \Phi_{n}^{\ast}(z) =
\sum_{j=0}^{n+1} \s_{n,j} \Phi_{j}(z). 
\end{align}
Then we have $
\langle A\Phi_{n}',1 \rangle
- n a_3 \langle  \Phi_{n+2},1 \rangle
- \p_{n,n} \langle  z\Phi_{n}^{\ast},1 \rangle
- \R_{n,n} \langle \Phi_{n}^{\ast},1 \rangle
= \sum_{j=0}^{n+1} \s_{n,j} \langle \Phi_{j}, 1 \rangle
$. Using the identities  $ \langle  z\Phi_{n}^{\ast},1 \rangle=-\ell_{n+1,n}\langle \Phi_n,\Phi_n\rangle$, $\langle \Phi_{n}^{\ast},1 \rangle=\langle \Phi_n,\Phi_n\rangle$, and the orthogonality of the MOPUC we find
$$
\langle A\Phi_{n}',1 \rangle
+ (\p_{n,n} \ell_{n+1,n} - \R_{n,n})\langle\Phi_n,\Phi_n\rangle = \s_{n,0} \langle 1,1 \rangle. 
$$
Let $\p_{n,n}$ and $\R_{n,n}$ be such that $(\R_{n,n}-\p_{n,n} \ell_{n+1,n})\langle\Phi_n,\Phi_n\rangle=\langle A\Phi_{n}',1 \rangle$, it follows 
that $\s_{n,0}=0$. Set $\p_{n,n}=\langle A\Phi_{n}',z \rangle/ \langle \Phi_{n}^{\ast}, 1 \rangle = (ib_3+2a_3)\overline{\alpha}_n$. By requiring that 
$(\R_{n,n}-\p_{n,n} \ell_{n+1,n})\langle\Phi_n,\Phi_n\rangle=\langle A\Phi_{n}',1 \rangle$ and by letting $k=1 $ in \eqref{Eq-A-phi-1} we deduce
$$
\langle A\Phi_{n}',1 \rangle 
= (ib_3+a_3) \langle z^2 \Phi_{n}, 1 \rangle + (ib_2+a_2) \langle z \Phi_{n}, 1 \rangle, 
$$
and since $\langle z^2 \Phi_{n}, 1 \rangle =[-\overline{\alpha}_n\ell_{n+1,n}+\overline{\alpha}_{n+1}(1-|\alpha_n|^2)]\langle\Phi_n,\Phi_n\rangle$
and $\langle z \Phi_{n}, 1 \rangle=\overline{\alpha}_n\langle\Phi_n,\Phi_n\rangle$, we conclude that 
\begin{align}
\R_{n,n}=\p_{n,n}\ell_{n+1,n}+(ib_3+a_3)\overline{\alpha}_{n+1}(1-|\alpha_n|^2)+[ib_2+a_2-\ell_{n+1,n}(ib_3+a_3)]\overline{\alpha}_n. 
\end{align}
Therefore, $\R_{n,n}$ is found from $\p_{n,n}$.

From \eqref{defAdevphi}, since $\s_{n,0}=0$, we obtain 
$
\langle A\Phi_{n}',z \rangle
- \p_{n,n} \langle  \Phi_{n}^{\ast} , 1 \rangle
= \s_{n,1} \langle \Phi_{1}, z \rangle.
$
Thus we obtain that $\s_{n,1}=0$ from the definition of $\p_{n,n}$. 

The inner product of \eqref{defAdevphi} with $z^k$ produces the identity
\begin{equation}\label{rel-estrutura-inner-product}
\langle A(z)\Phi_{n}',z^k \rangle - n a_3 \langle\Phi_{n+2},z^k \rangle -\p_{n,n} \langle z\Phi_{n}^{\ast},z^k \rangle - \R_{n,n} \langle\Phi_{n}^{\ast},z^k \rangle =
\sum_{j=2}^{n+1} \s_{n,j} \langle \Phi_{j},z^k \rangle. 
\end{equation}
Note that \eqref{Eq-A-phi-2} and the equation above, when successively applied for $k=2,\ldots,n-2$,  shows that $\s_{n,k}=0$ for $k=2\ldots,n-2$ in view of the orthogonality of the MOPUC. To conclude the proof we need only to find the $\s_{n,k}$ for $k=n-1,n,n+1$. 

When $k=n-1$ in \eqref{rel-estrutura-inner-product}, it follows that $\langle A\Phi_n',z^{n-1}\rangle=\s_{n,n-1}\langle \Phi_{n-1},z^{n-1}\rangle$ because $\s_{n,i}=0$ for $i=2,3,\ldots,n-2$. As $\langle A\Phi_n',z^{n-1}\rangle = (ib_{0}+na_{0})\langle \Phi_{n},\Phi_{n}\rangle$, the value of $\s_{n,n-1}$ follows. 

By taking $k=n$ in \eqref{rel-estrutura-inner-product} we get $\langle A\Phi_{n}',z^{n} \rangle
=
 \s_{n,n} \langle \Phi_{n}, z^n \rangle +
 \s_{n,n-1} \langle \Phi_{n-1}, z^{n} \rangle $.
To obtain $\s_{n,n}$ notice that $\langle \Phi_{n-1},z^n\rangle = -\overline{\ell}_{n,n-1} \langle \Phi_{n-1},\Phi_{n-1}\rangle$ and $\langle A\Phi_{n}',z^{n} \rangle = \{ib_1+(n+1)a_{1}-\overline{\ell}_{n+1,n}[ib_{0}+(n+1)a_0]\}\langle \Phi_n,\Phi_n \rangle$.

Finally, from \eqref{defAdevphi} we get $\langle A\Phi_{n}',\Phi_{n+1} \rangle = \s_{n,n+1}\langle \Phi_{n+1},\Phi_{n+1}\rangle+\p_{n,n}\langle z\Phi_n^{\ast},\Phi_{n+1}\rangle $, and  
$\s_{n,n+1}$ follows from 
$\langle A\Phi_{n}',\Phi_{n+1} \rangle=\{na_{2}+a_{3}[(n-1)\ell_{n,n-1}-n\ell_{n+2,n+1}]\}\langle \Phi_{n+1},\Phi_{n+1}\rangle$ and $\langle z\Phi_n^{\ast},\Phi_{n+1}\rangle=-{\alpha}_{n-1}\langle \Phi_{n+1},\Phi_{n+1}\rangle$. 
\end{proof}

The coefficients $\s_{n,n}$ and $\s_{n,n+1}$ appearing in the structure relation of Theorem \ref{Thm1-main} can also be expressed in an alternative forms, as shown in the next result.

\begin{corollary}\label{Coro1-formula} Under the hypotheses of Theorem \ref{Thm1-main}, the values of $\s_{n,n}$ and $\s_{n,n+1}$ can also be represented as 
\begin{align*} 
\s_{n,n}  = & \ 
 na_1 - a_2\ell_{n,n-1} + a_{3}[(\ell_{n+1,n})^2-2\ell_{n,n-2}] \\ 
   & +  \{ib_2-(n-1)a_2-[ib_3-(n-2)a_3]\overline{\alpha}_{n}\alpha_{n-1}\}\overline{\alpha}_{n}\alpha_{n-1}
 \\ 
 & +  [ib_3-(n-2)a_3](1 - |\alpha_{n-1}|^2) \overline{\alpha}_{n}\alpha_{n-2}+[ib_3-(n-1)a_3](1 - |\alpha_{n}|^2) \overline{\alpha}_{n+1}\alpha_{n-1}, 
\end{align*}
and 
\begin{align*}
\s_{n,n+1} = &
\Big\{ (n+1)a_2 -  a_3 \ell_{n+1,n} |\alpha_n|^2-\overline{\ell}_{n+1,n}[a_1+a_{0}\overline{\ell}_{n+1,n}] \\
& +2a_{0}[\overline{\ell}_{n+2,n+1}\overline{\ell}_{n+1,n}-\overline{\ell}_{n+2,n}]  \Big\}  \times \frac{1}{1 - |\alpha_n|^2}  \\
& - (ib_3+a_3) \overline{\alpha}_{n+1} \alpha_n  +(ib_2+a_2) -(ib_0+na_0)\alpha_{n+1}\overline{\alpha}_{n-1}, 
\end{align*}
where $\ell_{i,i-1}$, $i=n+2,n+1,n$, and $\ell_{j,j-2}$, $j=n+2,n$, are as in \eqref{monic-OPUC}.
\end{corollary}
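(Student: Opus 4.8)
The plan is to re-evaluate the two inner products that pinned down $\s_{n,n}$ and $\s_{n,n+1}$ in the proof of Theorem~\ref{Thm1-main}, this time pairing the identity \eqref{defAdevphi} with a test element different from the one used there, expanding every resulting inner product into monomials and simplifying with the relations \eqref{prop-coef-n-1}. Throughout I would use the following elementary consequences of orthogonality: $\langle z^{m},\Phi_{n}\rangle=0$ for $0\le m\le n-1$, $\langle z^{n},\Phi_{n}\rangle=\langle\Phi_{n},\Phi_{n}\rangle$, $\langle z^{n+1},\Phi_{n}\rangle=-\ell_{n+1,n}\langle\Phi_{n},\Phi_{n}\rangle$, $\langle z^{n+2},\Phi_{n}\rangle=(\ell_{n+2,n+1}\ell_{n+1,n}-\ell_{n+2,n})\langle\Phi_{n},\Phi_{n}\rangle$, their conjugates $\langle\Phi_{n},z^{m}\rangle=\overline{\langle z^{m},\Phi_{n}\rangle}$, the expansion $\Phi_{n}^{\ast}(z)=\overline{\ell}_{n,0}z^{n}+\overline{\ell}_{n,1}z^{n-1}+\cdots+\overline{\ell}_{n,n-1}z+1$ with $\overline{\ell}_{n,0}=-\alpha_{n-1}$, and the norm ratio $\langle\Phi_{n+1},\Phi_{n+1}\rangle=(1-|\alpha_{n}|^{2})\langle\Phi_{n},\Phi_{n}\rangle$.

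For the alternative form of $\s_{n,n}$ I would take the inner product of \eqref{defAdevphi} with $\Phi_{n}$. Orthogonality annihilates $\Phi_{n+2}$, $\Phi_{n+1}$ and every $\Phi_{j}$ with $j\le n-1$, so
\[
\s_{n,n}\langle\Phi_{n},\Phi_{n}\rangle=\langle A\Phi_{n}',\Phi_{n}\rangle-\p_{n,n}\langle z\Phi_{n}^{\ast},\Phi_{n}\rangle-\R_{n,n}\langle\Phi_{n}^{\ast},\Phi_{n}\rangle .
\]
Expanding $A\Phi_{n}'$ in powers of $z$ and applying the moment identities above expresses $\langle A\Phi_{n}',\Phi_{n}\rangle$ as an explicit combination of $a_{1},a_{2},a_{3}$ and of $\ell_{n,n-1},\ell_{n,n-2},\ell_{n+1,n},\ell_{n+2,n+1},\ell_{n+2,n}$ times $\langle\Phi_{n},\Phi_{n}\rangle$; likewise $\langle\Phi_{n}^{\ast},\Phi_{n}\rangle=-\alpha_{n-1}\langle\Phi_{n},\Phi_{n}\rangle$ and $\langle z\Phi_{n}^{\ast},\Phi_{n}\rangle=(\alpha_{n-1}\ell_{n+1,n}+\overline{\ell}_{n,1})\langle\Phi_{n},\Phi_{n}\rangle$. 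Substituting the values of $\p_{n,n}$ and $\R_{n,n}$ from Theorem~\ref{Thm1-main} and eliminating $\ell_{n+1,n}$, $\ell_{n+2,n+1}$, $\ell_{n+2,n}$ and $\overline{\ell}_{n,1}$ by means of \eqref{prop-coef-n-1} (e.g.\ $\ell_{n+1,n}=\ell_{n,n-1}+\overline{\alpha}_{n}\alpha_{n-1}$ and $\alpha_{n-1}\ell_{n+1,n}+\overline{\ell}_{n,1}=\overline{\alpha}_{n}\alpha_{n-1}^{2}-(1-|\alpha_{n-1}|^{2})\alpha_{n-2}$) collapses the right-hand side to the claimed expression.

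For $\s_{n,n+1}$ I would instead write $\Phi_{n+1}=z^{n+1}+\sum_{j=0}^{n}\ell_{n+1,j}z^{j}$ and use \eqref{Eq-A-phi-2} to discard $\langle A\Phi_{n}',z^{k}\rangle$ for $2\le k\le n-2$, so that
\[
\langle A\Phi_{n}',\Phi_{n+1}\rangle=\langle A\Phi_{n}',z^{n+1}\rangle+\overline{\ell}_{n+1,n}\langle A\Phi_{n}',z^{n}\rangle+\overline{\ell}_{n+1,n-1}\langle A\Phi_{n}',z^{n-1}\rangle+\overline{\ell}_{n+1,1}\langle A\Phi_{n}',z\rangle+\overline{\ell}_{n+1,0}\langle A\Phi_{n}',1\rangle .
\]
The last four inner products are already recorded inside the proof of Theorem~\ref{Thm1-main}, while $\langle A\Phi_{n}',z^{n+1}\rangle$ follows from \eqref{Eq-A-phi-1} with $k=n+1$ by reducing $\langle[iB(z)+(n+2)A(z)]\Phi_{n},z^{n+2}\rangle$ with the same moment identities. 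Since $\langle A\Phi_{n}',\Phi_{n+1}\rangle=\p_{n,n}\langle z\Phi_{n}^{\ast},\Phi_{n+1}\rangle+\s_{n,n+1}\langle\Phi_{n+1},\Phi_{n+1}\rangle$ with $\langle z\Phi_{n}^{\ast},\Phi_{n+1}\rangle=-\alpha_{n-1}\langle\Phi_{n+1},\Phi_{n+1}\rangle$, dividing by $\langle\Phi_{n+1},\Phi_{n+1}\rangle=(1-|\alpha_{n}|^{2})\langle\Phi_{n},\Phi_{n}\rangle$, adding the term $(ib_{3}+2a_{3})\overline{\alpha}_{n}\alpha_{n-1}$, and finally inserting $\ell_{n+1,0}=-\overline{\alpha}_{n}$, $\ell_{n+1,1}=-(\overline{\alpha}_{n}\overline{\ell}_{n,n-1}+\overline{\alpha}_{n-1})$ and $\ell_{n+1,n-1}=\ell_{n,n-2}-\overline{\alpha}_{n}\overline{\ell}_{n,1}$ from \eqref{prop-coef-n-1}, and collecting everything over $1-|\alpha_{n}|^{2}$, yields the stated formula.

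I expect the only real difficulty to be the algebraic bookkeeping. In both target expressions the coefficients $b_{0},\dots,b_{3}$ survive only inside explicit low-order Verblunsky products, and not in the ``rational'' brackets; reaching this form forces an exact cancellation of the many $ib_{0}$, $ib_{2}$, $ib_{3}$ contributions carried by $\p_{n,n}$, $\R_{n,n}$ and by \eqref{Eq-A-phi-1}. These cancellations are dictated by \eqref{prop-coef-n-1} and by Szeg\H{o}'s recurrences, but keeping track of them — together with the correct handling of the substitutions for $\ell_{n+1,0}$, $\ell_{n+1,1}$, $\ell_{n+1,n-1}$, $\overline{\ell}_{n,1}$, $\ell_{n+2,n+1}$, $\ell_{n+2,n}$, and of the common factor $1-|\alpha_{n}|^{2}$ — is where all the care lies. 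No idea beyond those already used in Theorem~\ref{Thm1-main} is required.
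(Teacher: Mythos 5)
Your proposal is correct and follows essentially the paper's own route: for $\s_{n,n}$ you pair \eqref{defAdevphi} with $\Phi_n$ and use exactly the moment identities and the substitutions from \eqref{prop-coef-n-1} that the paper uses, and your auxiliary identity $\alpha_{n-1}\ell_{n+1,n}+\overline{\ell}_{n,1}=\overline{\alpha}_{n}\alpha_{n-1}^{2}-(1-|\alpha_{n-1}|^{2})\alpha_{n-2}$ checks out. For $\s_{n,n+1}$ the paper pairs the structure relation with $z^{n+1}$ and then substitutes the known $\s_{n,n}$, $\s_{n,n-1}$, $\R_{n,n}$, whereas you pair with $\Phi_{n+1}$ and expand $\Phi_{n+1}$ in monomials; since both hinge on \eqref{Eq-A-phi-1} at $k=n+1$ and the same recorded values of $\langle A\Phi_n',z^{j}\rangle$, this is the same computation organized through the triangular change of basis between $\{z^{j}\}$ and $\{\Phi_{j}\}$, and the remaining work is the same bookkeeping the paper also leaves implicit.
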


\begin{proof}
From Theorem \ref{Thm1-main}, it follows that  
\begin{align*}
\langle A\Phi_n',\Phi_n\rangle= \s_{n,n} \langle \Phi_{n}, \Phi_{n} \rangle + \p_{n,n}\langle z\Phi_{n}^{\ast},\Phi_n \rangle +\R_{n,n}\langle \Phi_{n}^{\ast},\Phi_n \rangle. 
\end{align*}

Notice that 
\begin{align*}
\langle A\Phi_n',\Phi_n\rangle=
na_{3}\langle z^{n+2},\Phi_n\rangle & +
[a_3(n-1)\ell_{n,n-1}+na_2]\langle z^{n+1},\Phi_n\rangle \\
& +[a_{3}(n-2)\ell_{n,n-2}+a_2(n-1)\ell_{n,n-1}+na_1]\langle z^n,\Phi_n\rangle,
\end{align*}
alongside the relations $\langle \Phi_{n}^{\ast},\Phi_n \rangle=-\alpha_{n-1}\langle \Phi_n,\Phi_n \rangle$, 
 $\langle z\Phi_{n}^{\ast},\Phi_n \rangle =(\alpha_{n-1}\ell_{n+1,n}+\overline{\ell}_{n,1})\langle \Phi_n,\Phi_n\rangle$, $\langle z^{n+2},\Phi_n\rangle =[\ell_{n+2,n+1}\ell_{n+1,n}-{\ell_{n+2,n}}]\langle \Phi_n,\Phi_n\rangle$,
 and $\langle z^{n+1},\Phi_n \rangle=-\ell_{n+1,n}\langle \Phi_n , \Phi_n\rangle$, imply
\begin{eqnarray*}
\s_{n,n} & = & \nonumber
 na_1 - a_2 \ell_{n,n-1} + [ib_2-(n-1)a_2] \overline{\alpha}_{n}\alpha_{n-1}
 \\ \nonumber
&& +a_3 \{ [ \,  n ( \ell_{n+2,n+1} 
- \ell_{n,n-1}) + \ell_{n+1,n}  ] \ell_{n+1,n}  - n\ell_{n+2,n} +(n-2)\ell_{n,n-2} \} \nonumber \\
& & - (ib_3+2a_3) \overline{\alpha}_n 
[\alpha_{n-1}(\ell_{n+1,n} - \ell_{n-1,n-2}) - \alpha_{n-2}] + (ib_3+a_3) \overline{\alpha}_{n+1} 
\alpha_{n-1} (1 - |\alpha_{n}|^2). 
\end{eqnarray*}
Thus the required expression for $\s_{n,n}$ follows from
\begin{align*}
\ell_{n+2,n}-\ell_{n+1,n-1}=(1-|\alpha_n|^2)\overline{\alpha}_{n+1}\alpha_{n-1}+\overline{\alpha}_{n+1}\alpha_{n}\ell_{n+1,n}=\overline{\alpha}_{n+1}\alpha_n\ell_{n,n-1}+\overline{\alpha}_{n+1}\alpha_{n-1}. 
\end{align*}
Similarly, as a consequence of Theorem \ref{Thm1-main}, we get
\begin{equation}\label{Coro1-eq1}
\begin{aligned}
\langle A\Phi_{n}',z^{n+1} \rangle = & \ 
 \s_{n,n+1} \langle  \Phi_{n+1}, \Phi_{n+1} \rangle 
 - \s_{n,n} \overline{\ell}_{n+1,n} \langle \Phi_{n}, \Phi_{n} \rangle  \\
 & + \s_{n,n-1}[\overline{\ell}_{n+1,n}\overline{\ell}_{n,n-1}-\overline{\ell}_{n+1,n-1}] \langle \Phi_{n-1}, \Phi_{n-1} \rangle 
+\R_{n,n}\alpha_{n} \langle \Phi_{n},\Phi_{n} \rangle, 
\end{aligned}
\end{equation}
which, in view of \eqref{Eq-A-phi-1}, becomes
\begin{equation}\label{Coro1-eq2}
\begin{aligned}
\langle A\Phi_{n}',z^{n+1} \rangle = &
\{ 
ib_2+(n+2) a_2
 -[ib_1+(n+2)a_1]\overline{\ell}_{n+1,n}  \\
& + [ib_0+(n+2)a_0](\overline{\ell}_{n+2,n+1}\overline{\ell}_{n+1,n}-\overline{\ell}_{n+2,n})\}\langle \Phi_n,\Phi_n\rangle. 
\end{aligned}
\end{equation}
By using \eqref{Coro1-eq1}, \eqref{Coro1-eq2} and replacing the values of $\s_{n,n}$, $\s_{n,n-1}$ and $\R_{n,n}$ as in Theorem \ref{Thm1-main} we have
\begin{align*}
\s_{n,n+1}  = & 
\Big\{ (n+1)a_2  - a_1 \overline{\ell}_{n+1,n} + (ib_0+(n+2)a_0)[\overline{\ell}_{n+2,n+1} \, \overline{\ell}_{n+1,n}  -
\overline{\ell}_{n+2,n} ] \nonumber  \\
 & - (ib_0+(n+1)a_0) \overline{\ell}_{n+1,n} \,
\overline{\ell}_{n+1,n} + (ib_0+na_0)      \overline{\ell}_{n+1,n-1}  -  a_3 \ell_{n+1,n} |\alpha_n|^2\Big\} \frac{1}{1 - |\alpha_n|^2}  \\
& -(ib_3+a_3) \overline{\alpha}_{n+1} \alpha_n  +(ib_2+a_2). 
\end{align*}
After manipulations similar to the ones used to obtain $\s_{n,n}$ we conclude the result.
\end{proof}

The following result presents difference equations for the associated Verblunsky
coefficients.  

\begin{theorem}
    \label{Coro-2} Under the hypotheses of Theorem \ref{Thm1-main}, the sequence $\{\alpha_n\}_{n\geqslant 0}$ of Verblunsky coefficients associated to the MOPUC satisfy the difference equation
\begin{align*}
[(n-1)\overline{a}_{0}-i\overline{b}_0]  (1-&|\alpha_{n-1}|^2)\alpha_{n-2}+[(n-1)\overline{a}_3 + i\overline{b}_3] (1-|\alpha_n|^2)\alpha_{n+1}    \\
& = 
-\{(n-1)\overline{a}_2+i\overline{b}_2-[i\overline{b}_3+(n-1)\overline{a}_3]\alpha_{n}\overline{\alpha}_{n-1}-2\overline{a}_3\overline{\ell}_{n,n-1}\}\alpha_{n} \\
& \quad - \{(n+1)\overline{a}_1-i\overline{b}_1+[i\overline{b}_0-(n+1)\overline{a}_0]\overline{\alpha}_n{\alpha}_{n-1}-2\overline{a}_{0}\ell_{n,n-1}\}\alpha_{n-1}.
\end{align*}    
\end{theorem}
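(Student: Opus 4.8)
The plan is to derive the difference equation by \emph{evaluating the structure relation of Theorem~\ref{Thm1-main} at $z=0$ and then taking complex conjugates}. The point $z=0$ is convenient because all the polynomials involved have elementary values there: from \eqref{monic-OPUC} together with $\alpha_{j-1}=-\overline{\ell}_{j,0}$ one has $\Phi_j(0)=\ell_{j,0}=-\overline{\alpha}_{j-1}$, $\Phi_n'(0)=\ell_{n,1}$, $\Phi_n^{\ast}(0)=1$, while $A(0)=a_0$. Setting $z=0$ in the identity of Theorem~\ref{Thm1-main} (which holds for $n\geqslant 3$) makes the term $\p_{n,n}z\,\Phi_n^{\ast}(z)$ drop out and leaves the scalar identity
\[
a_0\ell_{n,1}=-na_3\,\overline{\alpha}_{n+1}-\s_{n,n+1}\overline{\alpha}_{n}-\s_{n,n}\overline{\alpha}_{n-1}-\s_{n,n-1}\overline{\alpha}_{n-2}+\R_{n,n}.
\]
Taking complex conjugates, and recalling $\overline{ib_j}=-i\overline{b}_j$, turns this into an equation among $\alpha_{n+1},\alpha_n,\alpha_{n-1},\alpha_{n-2}$ with the conjugated coefficients $\overline{a}_j,\overline{b}_j$ --- precisely the shape of the asserted identity.

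Next I would insert the explicit data. For $a_0\ell_{n,1}$ I would use the conjugate of the second expression for $\ell_{n,1}$ in \eqref{prop-coef-n-1}, namely $\overline{\ell}_{n,1}=-[(1-|\alpha_{n-1}|^2)\alpha_{n-2}+\alpha_{n-1}\ell_{n,n-1}]$; the $\alpha_{n-2}$-part of $\overline{a}_0\,\overline{\ell}_{n,1}$ then combines with $\overline{\s}_{n,n-1}=(n\overline{a}_0-i\overline{b}_0)(1-|\alpha_{n-1}|^2)$ to produce exactly the coefficient $[(n-1)\overline{a}_0-i\overline{b}_0](1-|\alpha_{n-1}|^2)$ of $\alpha_{n-2}$, while the remaining piece of $\overline{a}_0\,\overline{\ell}_{n,1}$, proportional to $\ell_{n,n-1}\,\alpha_{n-1}$, is carried along. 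For $\R_{n,n}$, $\s_{n,n}$ and $\s_{n,n+1}$ I would substitute their conjugated formulas from Theorem~\ref{Thm1-main} and then remove the higher-index coefficients via the first-neighbour recurrences of \eqref{prop-coef-n-1}, i.e.\ $\ell_{n+1,n}=\ell_{n,n-1}+\overline{\alpha}_n\alpha_{n-1}$ and $\ell_{n+2,n+1}=\ell_{n,n-1}+\overline{\alpha}_n\alpha_{n-1}+\overline{\alpha}_{n+1}\alpha_n$, so that only $\ell_{n,n-1}$ remains.

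The last step is to collect the resulting expression according to the monomials $\alpha_{n+1}$, $\alpha_n$, $\alpha_{n-1}$. As a check on the bookkeeping, the $\alpha_{n+1}$-contributions are the conjugate of $-na_3\,\overline{\alpha}_{n+1}$, the $(1-|\alpha_n|^2)\alpha_{n+1}$-part of $\overline{\R}_{n,n}$, and an $n\overline{a}_3\,|\alpha_n|^2\,\alpha_{n+1}$ term produced when $\overline{\s}_{n,n+1}$ --- after the reduction of $\overline{\ell}_{n+2,n+1}$ --- is multiplied by $\alpha_n$; these sum to $[(n-1)\overline{a}_3+i\overline{b}_3](1-|\alpha_n|^2)\alpha_{n+1}$, the second term on the left of the claimed identity. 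The $\alpha_n$- and $\alpha_{n-1}$-collections go the same way: the two $\overline{\ell}_{n,n-1}$-pieces coming from $\overline{\R}_{n,n}$ and from $-\overline{\s}_{n,n+1}\alpha_n$ add up to $2\overline{a}_3\,\overline{\ell}_{n,n-1}\,\alpha_n$, the two $\ell_{n,n-1}$-pieces coming from $-\overline{\s}_{n,n}\alpha_{n-1}$ and from the carried-along term of $\overline{a}_0\,\overline{\ell}_{n,1}$ add up to $2\overline{a}_0\,\ell_{n,n-1}\,\alpha_{n-1}$, and the remaining pieces assemble into the bracketed coefficients of the statement. The only real difficulty is this final, entirely mechanical, collection of terms; there is no conceptual obstacle.
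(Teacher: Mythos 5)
Your proposal is correct and follows exactly the paper's own argument: evaluate the structure relation of Theorem~\ref{Thm1-main} at $z=0$, take complex conjugates, substitute $\overline{\ell}_{n,1}=-[(1-|\alpha_{n-1}|^2)\alpha_{n-2}+\alpha_{n-1}\ell_{n,n-1}]$ together with the conjugated expressions for $\R_{n,n}$ and the $\s_{n,k}$, reduce $\ell_{n+1,n}$ and $\ell_{n+2,n+1}$ to $\ell_{n,n-1}$ via \eqref{prop-coef-n-1}, and collect terms. The bookkeeping you sketch (including the $n\overline{a}_3|\alpha_n|^2\alpha_{n+1}$ term and the two doubled $\overline{\ell}_{n,n-1}$ and $\ell_{n,n-1}$ contributions) checks out.
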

\begin{proof} By making $z=0$ in the structure relation of Theorem \ref{Thm1-main} and taking its complex conjugate, we get
\begin{equation*}
\overline{a}_{0}(-\overline{\ell}_{n,1})=n\overline{a}_3\alpha_{n+1}+\overline{\s}_{n,n+1}{\alpha}_n+\overline{\s}_{n,n}\alpha_{n-1}+\overline{\s}_{n,n-1}\alpha_{n-2}+\overline{\R}_{n,n}.
\end{equation*}
Finally, we use $-\overline{\ell}_{n,1}=(1-|\alpha_{n-1}|^2)\alpha_{n-2}+\alpha_{n-1}\ell_{n,n-1}$, $\ell_{n,n-1}-\ell_{n+2,n+1}=-(\overline{\alpha}_n\alpha_{n-1}+\overline{\alpha}_{n+1}\alpha_n)$ and the expressions for $\R_{n,n}$ and $\s_{n,k}$ found in Theorem \ref{Thm1-main} for $k=n-1,n,n+1$ to obtain the difference equation. 
\end{proof}

Notice that the restriction of the above theorem to the case where $a_3=b_3=0$ results in the difference equations described in \cite{Bracciali-Rampazzi-SRibeiro-JAT2023}, which, in certain cases, includes a type of discrete Painlevé II. We return to the link between discrete Painlevé II equations and orthogonal polynomials on the unit circle in Subsections \ref{Subsec-generalized-circular-jacobi} and \ref{Sec-example-Bessel-Polynomials}. 

\section{Differential equations for the MOPUC}
\label{Sec3}

In this section we establish first and second order differential equations for the class of orthogonal polynomials $\Phi_n$ on the unit circle under study. These equations follow as consequence of the general structure relation presented in Theorem \ref{Thm1-main}.

\begin{theorem}\label{Thm-main-firstorder-edo} If the weight function $w$ satisfy \eqref{Eq-Tipo-Pearson-1} with $A(z)$ and $B(z)$ as in \eqref{defin-A-B-grau3}, and $A(1)[w(2\pi)-w(0)]=0$, then the semiclassical MOPUC $\Phi_n(z)$ and  $\Phi_n^{\ast}(z)$ satisfy
\begin{eqnarray}
\label{first-derivative-1}
zA(z)\Phi_{n}'(z) &=& U_n(z)\Phi_{n}(z)+V_n(z)\Phi_{n}^{\ast}(z), \\ \label{first-derivative-2}
    -\widehat{A}(z)(\Phi_{n}^{\ast})'(z) &=& W_n(z)\Phi_{n}(z)+Y_n(z)\Phi_{n}^{\ast}(z), 
\end{eqnarray}
with 
\begin{eqnarray*}
\widehat{A}(z)& =&\overline{a}_0z^3+\overline{a}_{1}z^2+\overline{a}_2z+\overline{a}_{3}, \\
U_n(z)&=&na_{3}z^3+(\s_{n,n+1}+na_{3}\overline{\alpha}_{n+1}\alpha_n)z^2+\s_{n,n}z+ib_{0}+na_{0}, \\
V_n(z)&=& [ib_3-(n-2)a_3]\overline{\alpha}_nz^2-(s_{n,n+1}\overline{\alpha}_n+na_{3}\overline{\alpha}_{n+1}-\R_{n,n})z+(ib_{0}+na_{0})\overline{\alpha}_{n-1},  \\
W_n(z)&=&(-i\overline{b}_{0}+n\overline{a}_0)\alpha_{n-1}z^2-(\overline{s}_{n,n+1}\alpha_n+n\overline{a}_{3}\alpha_{n+1}-\overline{\R}_{n,n})z+\overline{\p}_{n,n}-n\overline{a}_3\alpha_n, \\ 
Y_n(z)&=&(-i\overline{b}_{0})z^2+(\overline{\s}_{n,n}-n\overline{a}_{1})z+\overline{\s}_{n,n+1}+n\overline{a}_3\alpha_{n+1}\overline{\alpha}_n-n\overline{a}_2. 
\end{eqnarray*}
Here $\alpha_n$ is the Verblunsky coefficient, and the values $\s_{n,n}$, $\s_{n,n+1}$, $\R_{n,n}$ and $\p_{n,n}$ are as in Theorem \ref{Thm1-main}.
\end{theorem}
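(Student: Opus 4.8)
The plan is to derive \eqref{first-derivative-1} directly from the structure relation in Theorem \ref{Thm1-main} by eliminating the neighbouring polynomials $\Phi_{n+2}$, $\Phi_{n+1}$, $\Phi_{n-1}$ in favour of $\Phi_n$, $\Phi_n^{\ast}$, and then to obtain \eqref{first-derivative-2} from \eqref{first-derivative-1} by the standard reversal (star) operation. First I would multiply the structure relation of Theorem \ref{Thm1-main} by $z$ and systematically replace each of $z\Phi_{n+1}$, $z\Phi_{n-1}$, and the degree-$(n+2)$ term by expressions in $\Phi_n$ and $\Phi_n^{\ast}$ using the Szeg\H{o} recurrences \eqref{Szego-recurrence}: from the first recurrence, $\Phi_{n+1}(z)=z\Phi_n(z)-\overline{\alpha}_n\Phi_n^{\ast}(z)$, and iterating once more gives $\Phi_{n+2}(z)$ in terms of $z^2\Phi_n$, $z\Phi_n^{\ast}$, and $\Phi_{n+1}^{\ast}$; the reversed polynomials of consecutive degrees are in turn related by $\Phi_{n+1}^{\ast}(z)=\Phi_n^{\ast}(z)-\alpha_n z\Phi_n(z)$. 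Likewise the lower term $z\Phi_{n-1}(z)$ is expressed through $\Phi_n(z)$ and $\Phi_n^{\ast}(z)$ via the second recurrence $\Phi_n(z)=(1-|\alpha_{n-1}|^2)z\Phi_{n-1}(z)-\overline{\alpha}_{n-1}\Phi_n^{\ast}(z)$. Collecting the coefficients of $\Phi_n(z)$ and $\Phi_n^{\ast}(z)$ and sorting by powers of $z$ then yields polynomials $U_n(z)$ and $V_n(z)$ of degree three and two respectively; matching against the claimed formulas uses the coefficient identities \eqref{prop-coef-n-1}, in particular the relation $\ell_{n,n-1}-\ell_{n+2,n+1}=-(\overline{\alpha}_n\alpha_{n-1}+\overline{\alpha}_{n+1}\alpha_n)$ already invoked in the proof of Theorem \ref{Coro-2}, to absorb the $\ell$-terms hidden inside $\s_{n,n+1}$.

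For the second equation, I would apply the $\ast$-operation to \eqref{first-derivative-1}. Writing $\Phi_n(z)=z^n\overline{\Phi_n^{\ast}(1/\overline z)}$ and differentiating $\Phi_n^{\ast}(z)=z^n\overline{\Phi_n(1/\overline z)}$ gives the identity $z^2(\Phi_n^{\ast})'(z)=n z\Phi_n^{\ast}(z)-z^n\overline{\Phi_n'(1/\overline z)}$ (after conjugating and reparametrising). Substituting $z\to 1/\overline z$ in \eqref{first-derivative-1}, multiplying by an appropriate power of $z$, and conjugating converts $\overline{\Phi_n'(1/\overline z)}$ into a combination of $\Phi_n(z)$ and $\Phi_n^{\ast}(z)$ with coefficients built from $\overline{U_n}$, $\overline{V_n}$ evaluated with $z\mapsto 1/z$; the Laurent polynomial $A(1/\overline z)$ conjugates to $\widehat A(z)/z^3$, which explains both the appearance of $\widehat A$ and the sign. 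After clearing denominators one reads off $W_n(z)$ and $Y_n(z)$; the term $n z\Phi_n^{\ast}$ coming from $z^2(\Phi_n^{\ast})'$ contributes the explicit $-n\overline a_j$ shifts visible in $W_n$ and $Y_n$, and $\p_{n,n}$ enters through the constant term of $V_n$'s reversal.

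The main obstacle is bookkeeping rather than conceptual: keeping the degree-three term under control, since multiplying by $z$ raises degrees and the $na_3\Phi_{n+2}$ term must be reduced twice through the recurrences, producing several cross-terms in $\overline{\alpha}_{n+1}$, $\alpha_n$, and $\ell_{n+2,n+1}$ that have to cancel or recombine into the compact coefficients stated. I expect the verification that the coefficient of $z^2$ in $U_n(z)$ collapses to $\s_{n,n+1}+na_3\overline{\alpha}_{n+1}\alpha_n$ — and the parallel check for the $z$-coefficient of $V_n$ — to be the delicate points, both resolved by repeated use of \eqref{prop-coef-n-1} and the recurrence-derived identities for $\ell_{n+2,n+1}$ and $\ell_{n+2,n}$. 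Once \eqref{first-derivative-1} is pinned down, \eqref{first-derivative-2} is essentially automatic from the $\ast$-symmetry, so no genuinely new difficulty arises there.
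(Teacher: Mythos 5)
Your proposal follows essentially the same route as the paper: reduce the structure relation of Theorem \ref{Thm1-main} after multiplying by $z$, using the Szeg\H{o} recurrences to express $\Phi_{n+2}$, $\Phi_{n+1}$, $\Phi_{n+1}^{\ast}$ and $z\Phi_{n-1}$ in terms of $\Phi_n$ and $\Phi_n^{\ast}$ (with $\s_{n,n-1}$ absorbing the factor $1-|\alpha_{n-1}|^2$ from the second recurrence), and then obtain \eqref{first-derivative-2} by the substitution $z\mapsto 1/\overline{z}$, conjugation, multiplication by $z^{n+3}$, and the identity $z^{n-1}\overline{\Phi_n'(1/\overline{z})}=n\Phi_n^{\ast}(z)-z(\Phi_n^{\ast})'(z)$. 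This is exactly the paper's argument, and the bookkeeping issues you flag are the only remaining (routine) work.
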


\begin{proof} Applying (Szeg\H{o}'s recurrence) \eqref{Szego-recurrence} for $\Phi_{n+2}$, $\Phi_{n+1}$ in Theorem \ref{Thm1-main} results in

\begin{equation}\label{proof-first-derivative-eq-1}
\begin{aligned}
    A(z)\Phi_n'(z) & =[na_3z^2+\s_{n,n+1}z+\s_{n,n}]\Phi_n(z)-na_{3}\overline{\alpha}_{n+1}\Phi_{n+1}^{\ast}(z)
     \\
    &+[-(na_3\overline{\alpha}_n-\p_{n,n})z+\R_{n,n}-\s_{n,n+1}\overline{\alpha}_n]\Phi_{n}^{\ast}(z)
    +\s_{n,n-1}\Phi_{n-1}(z). 
\end{aligned} 
\end{equation}
Now by multiplying \eqref{proof-first-derivative-eq-1} by $z$, \eqref{first-derivative-1} follows from 
$\Phi_{n+1}^{\ast}(z)=\Phi_n^{\ast}(z)-\alpha_{n}z\Phi_{n}(z)$ and \linebreak $\s_{n,n-1}z\Phi_{n-1}(z)=(ib_{0}+na_{0})[\Phi_{n}(z)+\overline{\alpha}_{n-1}\Phi_{n}^{\ast}(z)]$.

As for \eqref{first-derivative-2}, we replace $z$ with $1/\overline{z}$ in \eqref{first-derivative-1} and take the complex conjugate followed by multiplication by $z^{n+3}$ to obtain
\begin{equation*}
\widehat{A}(z)z^{n-1}\overline{\Phi_{n}'(1/\overline{z})}=z^{3}\overline{U_{n}(1/\overline{z})}\Phi_{n}^{\ast}(z)+z^{3}\overline{V_{n}(1/\overline{z})}\Phi_{n}(z). 
\end{equation*}
The result then follows from  $z^{n-1}\overline{\Phi_{n}'(1/\overline{z})}=n\Phi_{n}^{\ast}(z)-z(\Phi_n^{\ast})'(z)$ after some simplifications. 
\end{proof}

The first order differential equations of the Theorem \ref{Thm-main-firstorder-edo} can be presented in a couple of equivalent forms. For instance, the Szeg\H{o}'s recurrences can be used to show that

\begin{corollary}\label{Coro1-edo-first-order} Under the hypotheses of Theorem \ref{Thm-main-firstorder-edo} we have
\begin{eqnarray*}
    zA(z)\Phi_n'(z) & = & [U_n(z)-V_n(z)\alpha_{n-1}]\Phi_n(z)+(1-|\alpha_{n-1}|^2)V_n(z)\Phi_{n-1}^{\ast}(z), \\
    -z\widehat{A}(z)(\Phi_n^{\ast})'(z)& = & W_n(z)\Phi_{n+1}(z)+[W_n(z)\overline{\alpha}_{n}+zY_n(z)]\Phi_n^{\ast}(z).
\end{eqnarray*}  
\end{corollary}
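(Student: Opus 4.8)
The plan is to start from the two first-order differential equations in Theorem \ref{Thm-main-firstorder-edo}, namely $zA(z)\Phi_n'(z) = U_n(z)\Phi_n(z) + V_n(z)\Phi_n^{\ast}(z)$ and $-\widehat{A}(z)(\Phi_n^{\ast})'(z) = W_n(z)\Phi_n(z) + Y_n(z)\Phi_n^{\ast}(z)$, and simply substitute the appropriate Szeg\H{o} recurrence to trade $\Phi_n^{\ast}$ for $\Phi_{n-1}^{\ast}$ in the first equation, and to trade $\Phi_n$ for $\Phi_{n+1}$ in the second. For the first identity, I would use the form $\Phi_n^{\ast}(z) = \Phi_{n-1}^{\ast}(z) - \alpha_{n-1} z \Phi_{n-1}(z)$ together with the first Szeg\H{o} recurrence in \eqref{Szego-recurrence}, which in its second line reads $\Phi_n(z) = (1-|\alpha_{n-1}|^2) z \Phi_{n-1}(z) - \overline{\alpha}_{n-1}\Phi_n^{\ast}(z)$, equivalently $z\Phi_{n-1}(z) = (1-|\alpha_{n-1}|^2)^{-1}[\Phi_n(z) + \overline{\alpha}_{n-1}\Phi_n^{\ast}(z)]$. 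A cleaner route: directly invert the pair $\{\Phi_{n-1},\Phi_{n-1}^{\ast}\} \leftrightarrow \{\Phi_n,\Phi_n^{\ast}\}$. From $\Phi_n(z) = z\Phi_{n-1}(z) - \overline{\alpha}_{n-1}\Phi_{n-1}^{\ast}(z)$ and $\Phi_n^{\ast}(z) = \Phi_{n-1}^{\ast}(z) - \alpha_{n-1}z\Phi_{n-1}(z)$, we solve for $\Phi_{n-1}^{\ast}$ in terms of $\Phi_n, \Phi_n^{\ast}$ and substitute into $V_n(z)\Phi_n^{\ast}(z)$.

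Concretely, the two recurrence relations give the linear system whose determinant is $1 - |\alpha_{n-1}|^2$, yielding
$\Phi_n^{\ast}(z) = \alpha_{n-1}\,\Phi_n(z) + (1-|\alpha_{n-1}|^2)\,\Phi_{n-1}^{\ast}(z)$ after eliminating $z\Phi_{n-1}(z)$; substituting this into \eqref{first-derivative-1} and collecting the coefficient of $\Phi_n(z)$ immediately produces $zA(z)\Phi_n'(z) = [U_n(z) - V_n(z)\alpha_{n-1}]\Phi_n(z) + (1-|\alpha_{n-1}|^2)V_n(z)\Phi_{n-1}^{\ast}(z)$, which is the first claimed identity. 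For the second identity, from the same pair of recurrences one eliminates $\Phi_{n-1}^{\ast}$ to express $\Phi_n(z)$; more directly, I would use $\Phi_{n+1}(z) = z\Phi_n(z) - \overline{\alpha}_n \Phi_n^{\ast}(z)$, equivalently $z\Phi_n(z) = \Phi_{n+1}(z) + \overline{\alpha}_n\Phi_n^{\ast}(z)$, and multiply \eqref{first-derivative-2} through by $z$: this turns $zW_n(z)\Phi_n(z)$ — wait, more carefully, I multiply by $z$ to get $-z\widehat{A}(z)(\Phi_n^{\ast})'(z) = W_n(z)\,z\Phi_n(z) + zY_n(z)\Phi_n^{\ast}(z) = W_n(z)[\Phi_{n+1}(z) + \overline{\alpha}_n\Phi_n^{\ast}(z)] + zY_n(z)\Phi_n^{\ast}(z)$, and grouping the $\Phi_n^{\ast}$ terms gives exactly $W_n(z)\Phi_{n+1}(z) + [W_n(z)\overline{\alpha}_n + zY_n(z)]\Phi_n^{\ast}(z)$, the second claimed identity.

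This is essentially a bookkeeping exercise once the right substitution is chosen; there is no genuine obstacle, since everything is linear in $\Phi_n, \Phi_n^{\ast}$ (or $\Phi_{n+1}, \Phi_n^{\ast}$) and the Szeg\H{o} recurrences do all the work. The only point requiring a little care is consistency of the normalization $\alpha_{-1} = -1$ when $n$ is small, and making sure the determinant factor $1 - |\alpha_{n-1}|^2$ is nonzero — which holds for a positive measure, so the inversion of the recurrence pair is legitimate. I would simply remark that these are reformulations obtained by applying \eqref{Szego-recurrence}, display the substitutions above, and conclude.
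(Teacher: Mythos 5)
Your proposal is correct and is exactly the route the paper intends (the paper offers no written proof, only the remark that the Szeg\H{o} recurrences yield these reformulations): substitute $\Phi_n^{\ast}$ via the recurrence pair in the first equation of Theorem \ref{Thm-main-firstorder-edo}, and multiply the second equation by $z$ and use $z\Phi_n(z)=\Phi_{n+1}(z)+\overline{\alpha}_n\Phi_n^{\ast}(z)$. One slip: eliminating $z\Phi_{n-1}$ from the pair $\Phi_n=z\Phi_{n-1}-\overline{\alpha}_{n-1}\Phi_{n-1}^{\ast}$, $\Phi_n^{\ast}=\Phi_{n-1}^{\ast}-\alpha_{n-1}z\Phi_{n-1}$ gives $\Phi_n^{\ast}(z)=-\alpha_{n-1}\Phi_n(z)+(1-|\alpha_{n-1}|^2)\Phi_{n-1}^{\ast}(z)$, with a minus sign on the first term rather than the plus sign you wrote; only with this minus sign does the substitution produce the stated coefficient $U_n(z)-V_n(z)\alpha_{n-1}$, so your displayed intermediate identity is inconsistent with your (correct) conclusion and should be fixed.
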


\begin{corollary}\label{Edo-firs-order-Ismailtype}
Under the hypotheses of Theorem \ref{Thm-main-firstorder-edo} we have
\begin{eqnarray*}
   \overline{\alpha}_{n-1}zA(z)\Phi_n'(z) & = & [\overline{\alpha}_{n-1}U_n(z)-V_n(z)]\Phi_n(z)+(1-|\alpha_{n-1}|^2)zV_n(z)\Phi_{n-1}(z), \\
    \overline{\alpha}_{n-1}\widehat{A}(z)(\Phi_n^{\ast})'(z)& = & [Y_n(z)-\overline{\alpha}_{n-1}W_n(z)]\Phi_{n}(z)-(1-|\alpha_{n-1}|^2)zY_n(z)\Phi_{n-1}(z).
\end{eqnarray*}      
\end{corollary}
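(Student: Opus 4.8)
The plan is to derive both identities of Corollary~\ref{Edo-firs-order-Ismailtype} directly from the two equations of Corollary~\ref{Coro1-edo-first-order} by eliminating the reversed polynomials $\Phi_{n-1}^{\ast}(z)$ and $\Phi_{n}^{\ast}(z)$ in favor of $\Phi_n(z)$ and $\Phi_{n-1}(z)$ through Szeg\H{o}'s recurrences \eqref{Szego-recurrence}. First I would rewrite the first recurrence in \eqref{Szego-recurrence} at index $n$ so as to solve for $\Phi_{n-1}^{\ast}(z)$: since $\Phi_n(z)=z\Phi_{n-1}(z)-\overline{\alpha}_{n-1}\Phi_{n-1}^{\ast}(z)$, one gets $\overline{\alpha}_{n-1}\Phi_{n-1}^{\ast}(z)=z\Phi_{n-1}(z)-\Phi_n(z)$. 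Substituting this into the first equation of Corollary~\ref{Coro1-edo-first-order} after multiplying that equation through by $\overline{\alpha}_{n-1}$ gives
\begin{equation*}
\overline{\alpha}_{n-1}zA(z)\Phi_n'(z)=[\overline{\alpha}_{n-1}U_n(z)]\Phi_n(z)+(1-|\alpha_{n-1}|^2)V_n(z)[z\Phi_{n-1}(z)-\Phi_n(z)],
\end{equation*}
and collecting the $\Phi_n(z)$ terms, together with the identity $1-|\alpha_{n-1}|^2-1=-|\alpha_{n-1}|^2$ arranged so that the bracket reads $\overline{\alpha}_{n-1}U_n(z)-V_n(z)$ after absorbing one factor, yields the first claimed equation. (One must check the bookkeeping: $\overline{\alpha}_{n-1}U_n - (1-|\alpha_{n-1}|^2)V_n$ should simplify to $\overline{\alpha}_{n-1}U_n - V_n$; this uses $\overline{\alpha}_{n-1}\cdot\overline{\alpha}_{n-1}$-type cancellations coming from the $\alpha_{n-1}$ already present inside the combination $U_n - V_n\alpha_{n-1}$ in Corollary~\ref{Coro1-edo-first-order}, so it is cleaner to start from the original $U_n,V_n$ of Theorem~\ref{Thm-main-firstorder-edo} rather than the already-combined form.)

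For the second identity I would proceed analogously, starting from the second equation of Corollary~\ref{Coro1-edo-first-order}, namely $-z\widehat{A}(z)(\Phi_n^{\ast})'(z)=W_n(z)\Phi_{n+1}(z)+[W_n(z)\overline{\alpha}_n+zY_n(z)]\Phi_n^{\ast}(z)$. Here I need to express $\Phi_{n+1}(z)$ and $\Phi_n^{\ast}(z)$ in terms of $\Phi_n(z)$ and $\Phi_{n-1}(z)$: use $\Phi_{n+1}(z)=z\Phi_n(z)-\overline{\alpha}_n\Phi_n^{\ast}(z)$ to remove $\Phi_{n+1}$, leaving everything in terms of $\Phi_n(z)$ and $\Phi_n^{\ast}(z)$, and then use $\overline{\alpha}_{n-1}\Phi_n^{\ast}(z)=z\Phi_{n-1}(z)-\Phi_n(z)$ — wait, that is not quite the right index; rather from $\Phi_{n+1}(z)=z\Phi_n(z)-\overline{\alpha}_n\Phi_n^{\ast}(z)$ one can instead directly isolate $\Phi_n^{\ast}(z)$ if $\alpha_n\neq 0$, but to match the stated form with $\Phi_{n-1}(z)$ and the factor $(1-|\alpha_{n-1}|^2)$ I should instead use the relation $\overline{\alpha}_{n-1}\Phi_n^{\ast}(z)=z\Phi_{n-1}(z)-\Phi_n(z)$ obtained from Szeg\H{o} at index $n$ (this is exactly $\Phi_n = z\Phi_{n-1}-\overline{\alpha}_{n-1}\Phi_{n-1}^{\ast}$ together with the $\ast$-recurrence $\Phi_n^{\ast}=\Phi_{n-1}^{\ast}-\alpha_{n-1}z\Phi_{n-1}$, which combine to give $\overline{\alpha}_{n-1}\Phi_n^{\ast} = (1-|\alpha_{n-1}|^2)z\Phi_{n-1}-\overline{\alpha}_{n-1}\Phi_n + \dots$ — the precise combination needs to be derived carefully). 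Multiplying the equation through by $\overline{\alpha}_{n-1}$, substituting, and collecting the coefficients of $\Phi_n(z)$ and $\Phi_{n-1}(z)$ should produce $[Y_n(z)-\overline{\alpha}_{n-1}W_n(z)]\Phi_n(z)-(1-|\alpha_{n-1}|^2)zY_n(z)\Phi_{n-1}(z)$ on the right, with the sign on the left flipping because of how the $-z$ factor interacts with the substitution.

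The main obstacle will be the algebraic bookkeeping in correctly combining the Szeg\H{o} recurrences to express $\Phi_n^{\ast}$ (and $\Phi_{n-1}^{\ast}$, $\Phi_{n+1}$) purely in terms of $\Phi_n$ and $\Phi_{n-1}$ with the right $z$-weights and $\alpha$-factors, and then verifying that the coefficient combinations collapse to the compact forms stated — in particular confirming that the spurious cross terms proportional to $|\alpha_{n-1}|^2$ cancel as claimed and that the factor $(1-|\alpha_{n-1}|^2)$ emerges cleanly in front of $V_n$ and $zY_n$. There is no conceptual difficulty: everything is a finite linear manipulation of the two equations in Corollary~\ref{Coro1-edo-first-order} using the recurrences \eqref{Szego-recurrence} and the identity $\Phi_{n+1}^{\ast}(z)=\Phi_n^{\ast}(z)-\alpha_n z\Phi_n(z)$ already invoked in the proof of Theorem~\ref{Thm-main-firstorder-edo}. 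I would present the derivation of the first identity in full and then remark that the second follows by the same elimination procedure (or, alternatively, by applying the $z\mapsto 1/\overline{z}$ conjugation-and-reversal symmetry used in the proof of Theorem~\ref{Thm-main-firstorder-edo} to the first identity), leaving the symmetric computation to the reader.
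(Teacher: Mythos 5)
Your strategy is the intended one -- the paper gives no proof beyond the remark that ``the Szeg\H{o} recurrences can be used'' -- and your derivation of the first identity does close up once the $-|\alpha_{n-1}|^2V_n$ term you flag is restored (indeed $-|\alpha_{n-1}|^2V_n-(1-|\alpha_{n-1}|^2)V_n=-V_n$, exactly as you anticipate). The one thing you never pin down, and both of your attempts at it are off, is the single identity that does all the work: the second Szeg\H{o} recurrence in \eqref{Szego-recurrence} at index $n$ gives directly
\begin{equation*}
\overline{\alpha}_{n-1}\Phi_n^{\ast}(z)=(1-|\alpha_{n-1}|^2)\,z\Phi_{n-1}(z)-\Phi_n(z),
\end{equation*}
with no extra $\overline{\alpha}_{n-1}$ on the $\Phi_n$ term (your second guess) and with the factor $(1-|\alpha_{n-1}|^2)$ present precisely because the index on $\Phi^{\ast}$ is $n$, not $n-1$ (your first guess was the formula for $\overline{\alpha}_{n-1}\Phi_{n-1}^{\ast}$). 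With this in hand the detour through Corollary \ref{Coro1-edo-first-order} is unnecessary, as you yourself suspect: multiply the two equations of Theorem \ref{Thm-main-firstorder-edo} by $\overline{\alpha}_{n-1}$ and $-\overline{\alpha}_{n-1}$ respectively and substitute the displayed identity for $\overline{\alpha}_{n-1}\Phi_n^{\ast}$; both equations of the corollary then drop out in one line each, with no cross terms to cancel. Your route through Corollary \ref{Coro1-edo-first-order} also works (for the second identity the substitution $\Phi_{n+1}=z\Phi_n-\overline{\alpha}_n\Phi_n^{\ast}$ simply undoes Corollary \ref{Coro1-edo-first-order} and returns you to Theorem \ref{Thm-main-firstorder-edo}), but it only adds bookkeeping.
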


\begin{remark} The equations of Corollary \ref{Coro1-edo-first-order} appeared in a different context in \cite{branquinho2024} for the particular cases of the weight functions considered in Sections \ref{Example-ranga-WF} and \ref{Sec-example-Bessel-Polynomials}.  
\end{remark}

\begin{remark} When the Verblunsky coefficients are different from zero, Corollary \ref{Edo-firs-order-Ismailtype} is a necessary step to implement the method developed in \cite{ISMAIL-JAT_2001} to obtain differential equations using ladder operators.
\end{remark}

Now that we have the necessary requisites, we present in the next result second order differential equations for the MOPUC. We stress that our approach is rather straightforward and follows from the equations in Theorem \ref{Thm-main-firstorder-edo}.

\begin{theorem} \label{Thm3-second-order-1} Under the hypotheses of Theorem \ref{Thm-main-firstorder-edo}, $\Phi_n(z)$ satisfy
    \begin{align*}
    zA(z&)\Phi_n''(z) +\big\{[zA(z)]'-U_n(z)+\Theta(|V_n|)zA(z)Y_n(z){[\widehat{A}(z)]}^{-1}\big\}\Phi_n'(z)\\&-\big\{{\Theta(|V_n|)[Y_n(z)U_n(z)-V_n(z)W_n(z)]}{[\widehat{A}(z)]}^{-1}+U_n'(z)\big\}\Phi_n(z)=V_n'(z)\Phi_{n}^{\ast}(z), 
    \end{align*}
where $\Theta$ is the unit step function with the convention $\Theta(0)=0$.
 \end{theorem}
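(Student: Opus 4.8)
The plan is to eliminate $\Phi_n^{\ast}(z)$ from the coupled first order system of Theorem \ref{Thm-main-firstorder-edo} by differentiating one equation and substituting the other. First I would take equation \eqref{first-derivative-1}, namely $zA(z)\Phi_n'(z) = U_n(z)\Phi_n(z) + V_n(z)\Phi_n^{\ast}(z)$, and differentiate both sides with respect to $z$, obtaining
\begin{align*}
[zA(z)]'\Phi_n'(z) + zA(z)\Phi_n''(z) = U_n'(z)\Phi_n(z) + U_n(z)\Phi_n'(z) + V_n'(z)\Phi_n^{\ast}(z) + V_n(z)(\Phi_n^{\ast})'(z).
\end{align*}
The term $V_n(z)(\Phi_n^{\ast})'(z)$ is the one that must be rewritten. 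Using \eqref{first-derivative-2} in the form $(\Phi_n^{\ast})'(z) = -[W_n(z)\Phi_n(z) + Y_n(z)\Phi_n^{\ast}(z)]/\widehat{A}(z)$ (valid when $\widehat{A}(z)\not\equiv 0$), and then using \eqref{first-derivative-1} once more to express the remaining $\Phi_n^{\ast}(z)$ as $[zA(z)\Phi_n'(z) - U_n(z)\Phi_n(z)]/V_n(z)$ (valid when $V_n\not\equiv 0$), everything collapses to a relation among $\Phi_n''$, $\Phi_n'$, $\Phi_n$ and a single leftover $V_n'(z)\Phi_n^{\ast}(z)$ term on the right-hand side. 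Collecting the coefficient of $\Phi_n'(z)$ gives $[zA(z)]' - U_n(z) + zA(z)Y_n(z)/\widehat{A}(z)$, and the coefficient of $\Phi_n(z)$ gives $-U_n'(z) - [Y_n(z)U_n(z) - V_n(z)W_n(z)]/\widehat{A}(z)$, which matches the claimed equation exactly.

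The role of the step function $\Theta$ is bookkeeping for the degenerate cases. When $V_n\equiv 0$, the substitution of $\Phi_n^{\ast}$ via \eqref{first-derivative-1} is illegitimate, but in that case \eqref{first-derivative-1} already reads $zA(z)\Phi_n'(z) = U_n(z)\Phi_n(z)$, whose derivative is $zA(z)\Phi_n''(z) + \{[zA(z)]' - U_n(z)\}\Phi_n'(z) - U_n'(z)\Phi_n(z) = 0$; setting $\Theta(|V_n|) = \Theta(0) = 0$ in the stated formula reproduces precisely this, and the right-hand side $V_n'\Phi_n^{\ast}$ also vanishes if $V_n$ is identically zero. So I would handle the generic case $V_n\not\equiv 0$ first by the elimination above, and then observe that the $\Theta$-convention makes the formula also cover the $V_n\equiv 0$ case, with a brief remark that $V_n'$ on the right is understood consistently. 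One should also note $\widehat{A}(z)\not\equiv 0$ is implicitly assumed (it is the reversed/conjugated version of $A$, so it vanishes identically only in the trivial situation).

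The main obstacle is less a conceptual one than an algebraic verification: one must check that after the double substitution the coefficients genuinely organize into the compact combinations $zA Y_n/\widehat{A}$ and $[Y_nU_n - V_nW_n]/\widehat{A}$, i.e. that the $\Phi_n^{\ast}$ contributions coming from the two uses of \eqref{first-derivative-1} and \eqref{first-derivative-2} combine rather than leave residual rational terms. Concretely, writing $\Phi_n^{\ast}$ out via \eqref{first-derivative-1} introduces a factor $1/V_n$, which then multiplies the $Y_n\Phi_n^{\ast}$ term from \eqref{first-derivative-2}; this $1/V_n$ must cancel against the explicit $V_n$ multiplying $(\Phi_n^{\ast})'$, and it does, leaving only polynomial-over-$\widehat{A}$ coefficients. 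I expect this cancellation to be the one point worth spelling out; the rest is routine differentiation and collection of like terms, so I would present it as such and refer back to Theorem \ref{Thm-main-firstorder-edo} for the explicit forms of $U_n,V_n,W_n,Y_n$.
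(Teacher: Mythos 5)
Your proposal is correct and follows essentially the same route as the paper: both eliminate $\Phi_n^{\ast}$ from the first order system by differentiating \eqref{first-derivative-1}, substituting \eqref{first-derivative-2} for $(\Phi_n^{\ast})'$ and \eqref{first-derivative-1} for $V_n(z)\Phi_n^{\ast}(z)$, and treating the $V_n\equiv 0$ case separately via the $\Theta$-convention. The only cosmetic difference is that the paper organizes the computation through the integrating factor $\M_n(z)=\exp\left(\int Y_n(z)/\widehat{A}(z)\,dz\right)[V_n(z)]^{-1}$ whereas you differentiate and substitute directly; the cancellation of the $1/V_n$ factor that you single out is precisely what that integrating factor is built to produce, so the underlying algebra is identical.
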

\begin{proof}
   If $V_n\equiv 0$, the second order differential equation is obtained from the derivative of \eqref{first-derivative-1}. 
   If $V_n\neq 0$, let 
$\M_n(z)=\exp\left({\int Y_n(z)/\widehat{A}(z)dz}\right)[V_n(z)]^{-1}$.
By multiplying \eqref{first-derivative-1} by $\M_n(z)$ and taking the derivative with respect to  $z$ we obtain
\begin{align*}
zA(z)&\M_n(z)\Phi_n''(z) +\{zA(z)\M'(z)+[(zA(z))'-U_n(z)]\M_n(z)\}\Phi_n'(z) \\
& -\M_n'(z)[U_n(z)\Phi_n(z)+V_n(z)\Phi_n^{\ast}(z)]-\M_{n}(z)[U_{n}'(z)\Phi_n(z)+V_n(z)(\Phi_n^{\ast})'(z)] \\
&=\M_n(z)V_{n}'(z)\Phi_{n}^{\ast}(z).
\end{align*}
The definition of $\M_n$ furnishes
\begin{align*}
\M_n'(z)=\left(\frac{Y_n(z)}{\widehat{A}(z)}-\frac{V_n'(z)}{V_{n}(z)}\right)\M_n(z),
\end{align*}
that can be combined with \eqref{first-derivative-1} to produce
\begin{align*}
zA(z)\Phi_n''(z) &+\big\{[zA(z)]'+{z A(z) Y_n(z)}{[\widehat{A}(z)]^{-1}}-U_n(z)\big\}\Phi_n'(z) \\
& -[U_n(z)\Phi_n(z)+V_n(z)\Phi_n^{\ast}(z)]{Y_n(z)}[\widehat{A}(z)]^{-1} \\
& - U_n'(z)\Phi_n(z)-V_{n}(z)(\Phi_n^{\ast})'(z) \\
& =V_{n}'(z)\Phi_{n}^{\ast}(z). 
\end{align*}
To finish, we use \eqref{first-derivative-2} to obtain that 
\begin{align*}
 [U_n(z)\Phi_n(z)+V_n(z)\Phi_n^{\ast}&(z)]Y_n(z)[\widehat{A}(z)]^{-1}+U_n'(z)\Phi_n(z)+V_{n}(z)(\Phi_n^{\ast})'(z) \\
 & = \big\{ [Y_n(z)U_n(z)-V_n(z)W_n(z)][\widehat{A}(z)]^{-1} + U_{n}'(z)\big\}\Phi_n(z). 
\end{align*} 
\end{proof}
Notice that if $V_n'(z)=0$, the equation of Theorem \ref{Thm3-second-order-1} is manifestly linear. As an immediate consequence of Theorem \ref{Thm3-second-order-1}, if $V'_{n}(z)\neq 0$, equation \eqref{first-derivative-1} gives rise to
\begin{corollary}
\label{coro-linear-second-order-linear-phin}
Under the hypotheses of Theorem \ref{Thm3-second-order-1}, if $V_{n}(z)\neq 0$, then $\Phi_n$ satisfy 
    \begin{align*}          
    z&A(z)V_n(z)\Phi_n''(z) \\
    &+\big
    \{V_n(z)[zA(z)]'-V_n'(z)zA(z)+[Y_n(z)zA(z)-U_n(z)\widehat{A}(z)]{V_n(z)}{[\widehat{A}(z)]^{-1}}\big\}\Phi_n'(z)\\
    & \ \   -\big\{V_n(z)U'_n(z)-V_n'(z)U_n(z)
    +[Y_n(z)U_n(z)-V_n(z)W_n(z)]{V_n(z)}{[\widehat{A}(z)]^{-1}}\big\}\Phi_n(z)=0.
    \end{align*}
\end{corollary}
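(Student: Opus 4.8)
The plan is to derive Corollary \ref{coro-linear-second-order-linear-phin} directly from Theorem \ref{Thm3-second-order-1} by clearing denominators, exactly as the phrasing ``equation \eqref{first-derivative-1} gives rise to'' suggests. We are in the case $V_n(z)\neq 0$, and moreover we may assume $V_n'(z)\neq 0$ (otherwise, as already noted, the equation of Theorem \ref{Thm3-second-order-1} is itself linear and there is nothing new to prove); in this regime the unit step factor $\Theta(|V_n|)$ equals $1$, so the equation of Theorem \ref{Thm3-second-order-1} reads
\begin{align*}
zA(z)\Phi_n''(z)&+\big\{[zA(z)]'-U_n(z)+zA(z)Y_n(z)[\widehat{A}(z)]^{-1}\big\}\Phi_n'(z)\\
&-\big\{[Y_n(z)U_n(z)-V_n(z)W_n(z)][\widehat{A}(z)]^{-1}+U_n'(z)\big\}\Phi_n(z)=V_n'(z)\Phi_n^{\ast}(z).
\end{align*}

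The first step is to eliminate the $\Phi_n^{\ast}(z)$ term on the right-hand side. From \eqref{first-derivative-1} we have $V_n(z)\Phi_n^{\ast}(z)=zA(z)\Phi_n'(z)-U_n(z)\Phi_n(z)$, so multiplying the whole displayed equation by $V_n(z)$ replaces $V_n(z)V_n'(z)\Phi_n^{\ast}(z)$ with $V_n'(z)[zA(z)\Phi_n'(z)-U_n(z)\Phi_n(z)]$, which I move to the left-hand side. The second step is purely bookkeeping: multiply through by $\widehat{A}(z)$ as well, or equivalently keep $[\widehat{A}(z)]^{-1}$ and collect the coefficients of $\Phi_n''$, $\Phi_n'$, and $\Phi_n$. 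The $\Phi_n''$ coefficient becomes $zA(z)V_n(z)$; the $\Phi_n'$ coefficient picks up $V_n(z)[zA(z)]'$ from the first bracket, $-V_n'(z)zA(z)$ from the eliminated $\Phi_n^{\ast}$-term, and $V_n(z)[zA(z)Y_n(z)-U_n(z)\widehat{A}(z)][\widehat{A}(z)]^{-1}$ after regrouping $-U_n(z)V_n(z)+zA(z)Y_n(z)V_n(z)[\widehat{A}(z)]^{-1}$; the $\Phi_n$ coefficient collects $-V_n(z)U_n'(z)$, $+V_n'(z)U_n(z)$, and $-[Y_n(z)U_n(z)-V_n(z)W_n(z)]V_n(z)[\widehat{A}(z)]^{-1}$. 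These are exactly the three coefficients displayed in the corollary, so the identity follows.

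There is essentially no obstacle here beyond careful sign- and factor-tracking; the only point requiring a sentence of justification is the reduction to $\Theta(|V_n|)=1$, i.e. explaining that the hypothesis $V_n(z)\neq 0$ together with the trivial linearity when $V_n'(z)\equiv 0$ lets us assume we are in the generic case where Theorem \ref{Thm3-second-order-1} has its nontrivial form. One should also remark that $\widehat{A}(z)$ does not vanish identically (it is the reversed polynomial of $A$, so $\widehat{A}\equiv 0$ forces $A\equiv 0$, excluded since $\deg A\geq 1$ is assumed throughout Section \ref{sec2}), so that multiplying and dividing by $\widehat{A}(z)$ is legitimate as an identity of rational functions, and hence of polynomials after clearing; the resulting polynomial identity then holds for all $z$. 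With that in place, the corollary is just the rearranged form of Theorem \ref{Thm3-second-order-1} with the $\Phi_n^{\ast}$ term removed via \eqref{first-derivative-1}.
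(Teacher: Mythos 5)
Your proof is correct and is exactly the derivation the paper intends: the corollary is presented as an immediate consequence of Theorem \ref{Thm3-second-order-1}, obtained by multiplying that equation by $V_n(z)$ and eliminating $\Phi_n^{\ast}(z)$ via $V_n(z)\Phi_n^{\ast}(z)=zA(z)\Phi_n'(z)-U_n(z)\Phi_n(z)$ from \eqref{first-derivative-1}, which is precisely your computation. The only cosmetic remark is that your case split on $V_n'(z)$ is unnecessary: once $V_n\not\equiv 0$ forces $\Theta(|V_n|)=1$, the same substitution yields the stated identity whether or not $V_n'$ vanishes (the extra terms $-V_n'(z)zA(z)\Phi_n'(z)+V_n'(z)U_n(z)\Phi_n(z)$ simply drop out when $V_n'\equiv 0$).
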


In a similar fashion, a ``reversed version'' of the differential equation of Theorem \ref{Thm3-second-order-1} can be found for the reversed polynomial $\Phi_n^*$. In fact, we have

\begin{theorem}\label{Thm4-second-order-1} Under the hypotheses of Theorem \ref{Thm-main-firstorder-edo}, we have
\begin{align*}
    \widehat{A}(z)&(\Phi_n^{\ast})''(z) +\big\{(\widehat{A})'(z)+Y_n(z)- \Theta(|W_n|)\widehat{A}(z)U_n(z)[zA(z)]^{-1}\big\}(\Phi_n^{\ast})'(z) \\
    & +\big\{\Theta(|W_n|)[V_n(z)W_n(z)-U_n(z)Y_n(z)][zA(z)]^{-1}+Y_n'(z)\big\}\Phi_n^{\ast}(z) =-W'_n(z)\Phi_{n}(z).
\end{align*}
where $\Theta$ is the unit step function with the convention $\Theta(0)=0$.
\end{theorem}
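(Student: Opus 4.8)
\textbf{Proof plan for Theorem \ref{Thm4-second-order-1}.}
The strategy mirrors exactly the proof of Theorem \ref{Thm3-second-order-1}, but now applied to equation \eqref{first-derivative-2} rather than \eqref{first-derivative-1}, using \eqref{first-derivative-1} as the closing identity. The plan is as follows. First I would dispose of the degenerate case: if $W_n\equiv 0$, then \eqref{first-derivative-2} reads $-\widehat{A}(z)(\Phi_n^{\ast})'(z)=Y_n(z)\Phi_n^{\ast}(z)$, and differentiating this once produces a second order equation for $\Phi_n^{\ast}$ with no $\Phi_n$ term, which matches the stated equation when $\Theta(|W_n|)=0$ and the right-hand side is $-W_n'(z)\Phi_n(z)$ (note that $W_n\equiv 0$ forces $W_n'\equiv 0$, so consistency holds).

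For the main case $W_n\not\equiv 0$, I would introduce the integrating-factor-type auxiliary function
$\Nn_n(z)=\exp\!\left(\int U_n(z)/(zA(z))\,dz\right)[W_n(z)]^{-1}$,
multiply \eqref{first-derivative-2} by $\Nn_n(z)$, and differentiate with respect to $z$. Differentiating the product $-\widehat{A}(z)\Nn_n(z)(\Phi_n^{\ast})'(z)$ yields the $(\Phi_n^{\ast})''$ term and an $(\Phi_n^{\ast})'$ term with coefficient $[-\widehat{A}(z)\Nn_n(z)]' = -(\widehat{A})'(z)\Nn_n(z)-\widehat{A}(z)\Nn_n'(z)$; differentiating the right-hand side $\Nn_n(z)[W_n(z)\Phi_n(z)+Y_n(z)\Phi_n^{\ast}(z)]$ produces terms in $\Phi_n',(\Phi_n^{\ast})',\Phi_n,\Phi_n^{\ast}$. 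The key algebraic input is the logarithmic-derivative identity
$\Nn_n'(z)=\left(\dfrac{U_n(z)}{zA(z)}-\dfrac{W_n'(z)}{W_n(z)}\right)\Nn_n(z)$,
which, after dividing through by $\Nn_n(z)$ and substituting back, eliminates the explicit $\Nn_n$ and replaces $\widehat{A}(z)\Nn_n'(z)/\Nn_n(z)$ with $\widehat{A}(z)U_n(z)[zA(z)]^{-1}-\widehat{A}(z)W_n'(z)/W_n(z)$. At this stage one still has a stray $W_n(z)\Phi_n'(z)$ term and residual $\Phi_n$ terms; these are removed by invoking \eqref{first-derivative-1} (which expresses $zA(z)\Phi_n'(z)$ in terms of $\Phi_n$ and $\Phi_n^{\ast}$) to rewrite $W_n(z)\Phi_n'(z)=W_n(z)[U_n(z)\Phi_n(z)+V_n(z)\Phi_n^{\ast}(z)][zA(z)]^{-1}$, and the $\Phi_n$ contributions combine into the coefficient $[V_n(z)W_n(z)-U_n(z)Y_n(z)][zA(z)]^{-1}+Y_n'(z)$ multiplying $\Phi_n^{\ast}$, while the surviving $\Phi_n$ coefficient is exactly $-W_n'(z)$.

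The routine-but-delicate part will be bookkeeping the signs and collecting the four families of terms ($(\Phi_n^{\ast})''$, $(\Phi_n^{\ast})'$, $\Phi_n^{\ast}$, $\Phi_n$) correctly after the two substitutions; in particular one must check that the $W_n'/W_n$ contributions from $\Nn_n'$ cancel against the $-W_n'$ coming from differentiating $\Nn_n W_n$, leaving a clean right-hand side. I expect no genuine obstacle beyond this symmetry-tracking: the argument is the formal $\ast$-dual of Theorem \ref{Thm3-second-order-1} under the involution exchanging $(A,U_n,V_n,\Phi_n)\leftrightarrow(\widehat{A},Y_n,W_n,\Phi_n^{\ast})$ together with $zA(z)\leftrightarrow\widehat{A}(z)$ and the appropriate degree-reversal normalization $z^{n}\overline{(\cdot)(1/\overline z)}$, so one could alternatively derive the statement directly from Theorem \ref{Thm3-second-order-1} by that substitution — but carrying out the integrating-factor computation afresh is cleaner and avoids tracking how $\Theta(|V_n|)$ transforms into $\Theta(|W_n|)$.
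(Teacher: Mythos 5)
Your overall strategy is exactly the paper's: dispose of $W_n\equiv 0$ by direct differentiation, and otherwise multiply \eqref{first-derivative-2} by an integrating factor of the form $\exp(\pm\int U_n/(zA)\,dz)\,[W_n]^{-1}$, differentiate, substitute the logarithmic derivative of $\Nn_n$, and close with \eqref{first-derivative-1}. However, the one non-routine choice — the sign in the exponent — is wrong as written. You take $\Nn_n(z)=\exp\bigl(+\int U_n/(zA)\,dz\bigr)[W_n(z)]^{-1}$, whereas the paper (correctly) uses $\Nn_n(z)=\exp\bigl(-\int U_n/(zA)\,dz\bigr)[W_n(z)]^{-1}$, so that $\Nn_n'/\Nn_n=-U_n/(zA)-W_n'/W_n$. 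The point of the exponential is to cancel the self-coupling of $\Phi_n$ in its own first-order equation: since \eqref{first-derivative-1} gives $\Phi_n'=[U_n\Phi_n+V_n\Phi_n^{\ast}]/(zA)$ with coefficient $+U_n/(zA)$ on $\Phi_n$, one needs $(\Nn_n W_n)'=-\Nn_n W_n U_n/(zA)$ so that $(\Nn_nW_n)'\Phi_n+\Nn_nW_n\Phi_n'$ collapses to $\Nn_nW_nV_n(zA)^{-1}\Phi_n^{\ast}$. With your sign, $(\Nn_nW_n)'=+\Nn_nW_nU_n/(zA)$, and after substituting \eqref{first-derivative-1} you are left with an uncancelled term $2W_nU_n(zA)^{-1}\Phi_n$, while the $(\Phi_n^{\ast})'$ and $\Phi_n^{\ast}$ coefficients acquire $+\widehat{A}U_n(zA)^{-1}$ and $+U_nY_n(zA)^{-1}$ instead of the stated $-\widehat{A}U_n(zA)^{-1}$ and $-U_nY_n(zA)^{-1}$. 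The resulting identity is still true (all such variants differ by a multiple of the identically vanishing combination $\widehat{A}(\Phi_n^{\ast})'+Y_n\Phi_n^{\ast}+W_n\Phi_n$), but it is not the equation of the theorem, so the step ``the surviving $\Phi_n$ coefficient is exactly $-W_n'(z)$'' fails as described.

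The source of the slip is the duality heuristic you invoke at the end: the involution $(zA,U_n,V_n,\Phi_n)\leftrightarrow(\widehat{A},Y_n,W_n,\Phi_n^{\ast})$ is not an exact symmetry of the pair \eqref{first-derivative-1}--\eqref{first-derivative-2}, because \eqref{first-derivative-2} carries a minus sign on $\widehat{A}(\Phi_n^{\ast})'$ that \eqref{first-derivative-1} does not; this is precisely what flips the sign in the exponent relative to $\M_n(z)=\exp\bigl(\int Y_n/\widehat{A}\,dz\bigr)[V_n(z)]^{-1}$ from Theorem \ref{Thm3-second-order-1}. Everything else in your outline — the treatment of $W_n\equiv 0$, the cancellation of the $W_n'/W_n$ contributions via \eqref{first-derivative-2} leaving $-W_n'\Phi_n$ on the right, and the use of \eqref{first-derivative-1} to convert $W_n\Phi_n'$ — is correct and matches the paper once the sign is repaired.
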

\begin{proof}
If $W_n\equiv0$, the result follows by taking the derivative of \eqref{first-derivative-2}. If $W_n\neq 0$, let $\mathcal{N}_n(z)=\exp\left({-\int U_n(z)/zA(z)dz}\right)[W_n(z)]^{-1}$. By multiplying \eqref{first-derivative-2} by $\mathcal{N}_n(z)$ and taking the derivative of the outcome we have
\begin{align*}
    \widehat{A}(z)\mathcal{N}_n(z)&(\Phi_n^{\ast})''(z) +    [(\widehat{A})'(z)+Y_n(z)]\mathcal{N}_n(z)
    (\Phi_n^{\ast})' +Y_n'(z)\mathcal{N}_n(z)\Phi_n^{\ast}(z)\\  
    & +
    \mathcal{N}_n'(z)\{\widehat{A}(z)(\Phi_n^{\ast})'(z)+Y_n(z)\Phi_n^{\ast}(z)+W_n(z)\Phi_n(z)\}
    +W_n(z)\mathcal{N}_n(z)\Phi_n'(z) \\
    & =-W_n'(z)\mathcal{N}_n(z)\Phi_n(z). 
\end{align*}
Also, it follows from the definition of $\mathcal{N}_n$ that 
\begin{equation*}
 \mathcal{N}'_n(z)=\left(-\frac{U_n(z)}{zA(z)}-\frac{W_n'(z)}{W_{n}(z)}\right)\mathcal{N}_n(z), 
\end{equation*}
which can be combined with \eqref{first-derivative-2} to produce
\begin{align*}
    \widehat{A}(z)(\Phi_n^{\ast})''(z)& + \{(\widehat{A})'(z)+Y_n(z)-\widehat{A}(z)U_n(z)[zA(z)]^{-1}\}
    (\Phi_n^{\ast})'  \\
    & +\{-Y_n(z)U_n(z)[zA(z)]^{-1}+Y_n'(z)\}\Phi_n^{\ast}(z)\\  
    & +\{-W_{n}(z)U_{n}(z)\Phi_{n}(z)+W_{n}(z)zA(z)\Phi_n'(z)\}[zA(z)]^{-1} \\
    & =-W_n'(z)\Phi_n(z). 
\end{align*}
To conclude, notice that from \eqref{first-derivative-1},
\begin{align*}
[-W_{n}(z)U_{n}(z)\Phi_{n}(z)+W_{n}(z)zA(z)\Phi_n'(z)][zA(z)]^{-1}=V_{n}(z)W_{n}(z)[zA(z)]^{-1}\Phi_{n}^{\ast}(z).
\end{align*}
\end{proof}

Naturally, we also obtain a version of Corollary \ref{coro-linear-second-order-linear-phin} for the reversed polynomial. If $W_n'(z)\neq 0$, \eqref{first-derivative-1} can be used to replace $\Phi_n(z)$ and get
\begin{corollary}
Under the hypotheses of Theorem \ref{Thm4-second-order-1}, if $W_{n}(z)\neq 0$, then $\Phi^*_n$ satisfy   
\begin{align*}
& \big\{
  W_n(z)Y_n'(z)-W_n'(z)Y_n(z)+[V_n(z)W_n(z)-U_n(z)Y_n(z)]{W_n(z)}{[zA(z)]^{-1}}
  \big\}(\Phi_n^{\ast})(z) \\
&+\big\{W_n(z)(\widehat{A})'(z)
  -W_n'(z)\widehat{A}(z)
  +[Y_n(z)zA(z)-\widehat{A}(z)U_n(z)]{W_n(z)}{[zA(z)]^{-1}}\big\}(\Phi_n^{\ast})'(z) \\ 
 & \ \ \ \ +W_n(z)\widehat{A}(z) (\Phi_n^{\ast})''(z)=0. \\ 
\end{align*}
\end{corollary}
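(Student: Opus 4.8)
The plan is to mirror the proof strategy of Theorem \ref{Thm4-second-order-1}, but instead of eliminating $\Phi_n^*$ in favor of $\Phi_n$, I will eliminate $\Phi_n$ in favor of $\Phi_n^*$. Starting from the second order equation of Theorem \ref{Thm4-second-order-1}, which has the schematic form $\widehat{A}(\Phi_n^*)'' + (\,\cdot\,)(\Phi_n^*)' + (\,\cdot\,)\Phi_n^* = -W_n' \Phi_n$, the assumption $W_n(z)\neq 0$ means we may solve the first order equation \eqref{first-derivative-1}, namely $zA(z)\Phi_n'(z) = U_n(z)\Phi_n(z) + V_n(z)\Phi_n^*(z)$, together with \eqref{first-derivative-2} to express $\Phi_n$ in terms of $\Phi_n^*$ and its derivative. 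Concretely, from \eqref{first-derivative-2} one reads off $W_n(z)\Phi_n(z) = -\widehat{A}(z)(\Phi_n^*)'(z) - Y_n(z)\Phi_n^*(z)$, so on the region where $W_n\neq 0$ we have a clean substitution $\Phi_n(z) = -[\widehat{A}(z)(\Phi_n^*)'(z)+Y_n(z)\Phi_n^*(z)]/W_n(z)$.

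First I would take the second order equation of Theorem \ref{Thm4-second-order-1} in the form valid when $W_n\neq 0$ (so that $\Theta(|W_n|)=1$), multiply the whole equation through by $W_n(z)$ to clear the denominator on the right-hand side and to prepare for substitution, and then replace $\Phi_n(z)$ appearing in the $-W_n'(z)\Phi_n(z)$ term (and anywhere else it implicitly enters) using the relation above. After substitution the right-hand side $-W_n'(z)\Phi_n(z) = W_n'(z)[\widehat{A}(z)(\Phi_n^*)'(z)+Y_n(z)\Phi_n^*(z)]/W_n(z)$; multiplying the equation by $W_n(z)$ throughout moves everything to a single side and produces a homogeneous linear second order ODE purely in $\Phi_n^*$, $(\Phi_n^*)'$, $(\Phi_n^*)''$. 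Collecting the coefficient of $(\Phi_n^*)''$ gives $W_n(z)\widehat{A}(z)$; the coefficient of $(\Phi_n^*)'$ combines the $W_n\cdot[(\widehat{A})'+Y_n-\widehat{A}U_n(zA)^{-1}]$ term with the $+W_n'\widehat{A}$ term coming from the substituted right side, yielding $W_n(z)(\widehat{A})'(z) - W_n'(z)\widehat{A}(z) + [Y_n(z)zA(z)-\widehat{A}(z)U_n(z)]W_n(z)[zA(z)]^{-1}$; and the coefficient of $\Phi_n^*$ assembles $W_n\cdot\{[V_nW_n-U_nY_n](zA)^{-1}+Y_n'\}$ with the $+W_n'Y_n$ term, giving $W_n(z)Y_n'(z) - W_n'(z)Y_n(z) + [V_n(z)W_n(z)-U_n(z)Y_n(z)]W_n(z)[zA(z)]^{-1}$. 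These match exactly the three coefficient blocks displayed in the corollary statement, so the bookkeeping is forced.

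The main obstacle is purely organizational rather than conceptual: one must be careful that in Theorem \ref{Thm4-second-order-1} the term $-\Theta(|W_n|)\widehat{A}(z)U_n(z)[zA(z)]^{-1}$ and the term $\Theta(|W_n|)[V_n W_n - U_n Y_n][zA(z)]^{-1}$ already encode the elimination of $\Phi_n'$ via \eqref{first-derivative-1}, so the only genuinely new substitution needed here is the algebraic one for $\Phi_n$ itself, and one must confirm that the sign in $W_n \Phi_n = -\widehat{A}(\Phi_n^*)' - Y_n\Phi_n^*$ (read directly from \eqref{first-derivative-2}: $-\widehat{A}(\Phi_n^*)' = W_n\Phi_n + Y_n\Phi_n^*$) feeds through correctly into the $-W_n'\Phi_n$ term. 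I would also note, for completeness, that the corollary implicitly requires $zA(z)\not\equiv 0$ so that the factors $[zA(z)]^{-1}$ make sense, which is guaranteed under the standing hypotheses since $A$ has degree at least one in the nontrivial cases; in the degenerate situation $A\equiv 0$ the Pearson equation \eqref{Eq-Tipo-Pearson-1} is vacuous and there is nothing to prove. No further identities beyond \eqref{first-derivative-1}, \eqref{first-derivative-2}, and Theorem \ref{Thm4-second-order-1} are invoked.
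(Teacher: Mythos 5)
Your proposal is correct and follows essentially the same route as the paper: multiply the equation of Theorem \ref{Thm4-second-order-1} by $W_n$ and eliminate $\Phi_n$ via the first-order relation $W_n\Phi_n=-\widehat{A}(\Phi_n^{\ast})'-Y_n\Phi_n^{\ast}$, then collect coefficients. (You correctly use \eqref{first-derivative-2} for this substitution; the paper's pointer to \eqref{first-derivative-1} is evidently a slip, since eliminating $\Phi_n$ without introducing $\Phi_n'$ requires \eqref{first-derivative-2}.)
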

\section{Applications}
\label{Sec4}
In this section we present concrete examples of first and second order differential equations for several semiclassical weight functions. Specifically, we consider and generalize some of the weight functions that were characterized in \cite{Bracciali-Rampazzi-SRibeiro-JAT2023} to illustrate applications of Theorems \ref{Thm-main-firstorder-edo}, \ref{Thm3-second-order-1}, and Corollary \ref{coro-linear-second-order-linear-phin}. 

\subsection{Example 1}\label{Example-ranga-WF}
Let us consider first the semiclassical weight function that appeared in \cite{Ranga-PAMS2010}
\begin{equation}\label{medida-ranga}
w(\theta)=e^{-\eta\theta}[\sin^2(\theta/2)]^{\lambda}, \  \lambda>-1/2, \ \ \eta\in\mathbb{R}. 
\end{equation}
Let $b=\lambda+i\eta$. Then $w(\theta)$ satisfy \eqref{Eq-Tipo-Pearson-1} with
    \begin{align}\label{AeB-Ranga2010} A(z) = z-1 \ \mbox{and} \ B(z) = i[(b+1)z+\overline{b}]. 
    \end{align}
The various coefficients from Theorem \ref{Thm1-main} and Corollary \ref{Coro1-formula} read
 \begin{align*}
 \s_{n,n-1}=-(\overline{b}+n)(1-|\alpha_{n-1}|^2), \  \s_{n,n}=n, \ \mbox{and} \ \s_{n,n+1}=\p_{n,n}=
\R_{n,n}=0. 
 \end{align*}
 Structure relations and difference equations for this measure were explored in detail in  \cite{Bracciali-Rampazzi-SRibeiro-JAT2023}. The Verblunsky coefficients are explicitly given as
 \begin{align*}
     \alpha_{n-1}= -\frac{(b)_n}{(\overline{b}+1)_n}, \ \ \ n\geqslant 1.
 \end{align*}

 The next theorem presents the first order differential equations for the associated MOPUC.
 \begin{theorem}\label{Thm-aplic-medidaranga-edo1} Let $\Phi_n$ be the MOPUC corresponding to the weight function \eqref{medida-ranga}. Then
 \begin{align*}
     z(z-1)\Phi_n'(z) & =[nz-(\overline{b}+n)]\Phi_n(z)-(\overline{b}+n)\overline{\alpha}_{n-1}\Phi_n^{\ast}(z),  \\ 
     (1-z)(\Phi_n^{\ast})'(z) &=(b+n)\alpha_{n-1}\Phi_n(z)+b \,\Phi_n^{\ast}(z). 
 \end{align*}  
\end{theorem}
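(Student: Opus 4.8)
The plan is to specialize the general first order differential equations of Theorem \ref{Thm-main-firstorder-edo} to the present weight function. Since $A(z)=z-1$ here, we have $a_0=-1$, $a_1=1$, $a_2=a_3=0$, while $B(z)=i[(b+1)z+\overline{b}]$ gives $b_0=i\overline{b}$, $b_1=i(b+1)$, $b_2=b_3=0$. The hypothesis $A(1)[w(2\pi)-w(0)]=0$ holds because $A(1)=0$. First I would substitute these coefficient values, together with the already-recorded specializations $\s_{n,n-1}=-(\overline{b}+n)(1-|\alpha_{n-1}|^2)$, $\s_{n,n}=n$, and $\s_{n,n+1}=\p_{n,n}=\R_{n,n}=0$, into the polynomials $U_n$, $V_n$, $W_n$, $Y_n$ defined in Theorem \ref{Thm-main-firstorder-edo}.

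The computation of $U_n$ and $V_n$ is immediate: with $a_3=b_3=0$ and $\s_{n,n+1}=\R_{n,n}=0$ we get $U_n(z)=\s_{n,n}z+ib_0+na_0=nz-(\overline{b}+n)$, using $ib_0+na_0=i(i\overline{b})+n(-1)=-\overline{b}-n$; and $V_n(z)=(ib_0+na_0)\overline{\alpha}_{n-1}=-(\overline{b}+n)\overline{\alpha}_{n-1}$, since the $z^2$ and $z$ terms of $V_n$ vanish. Dividing equation \eqref{first-derivative-1}, namely $zA(z)\Phi_n'(z)=U_n(z)\Phi_n(z)+V_n(z)\Phi_n^*(z)$, through by the common factor (here $zA(z)=z(z-1)$ with no cancellation needed) yields the first stated identity directly.

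For the second identity I would use equation \eqref{first-derivative-2}, $-\widehat{A}(z)(\Phi_n^*)'(z)=W_n(z)\Phi_n(z)+Y_n(z)\Phi_n^*(z)$, with $\widehat{A}(z)=\overline{a}_0z^3+\overline{a}_1z^2+\overline{a}_2z+\overline{a}_3=-z^3+z^2=-z^2(z-1)$. From the formulas, $W_n(z)=(-i\overline{b}_0+n\overline{a}_0)\alpha_{n-1}z^2+\cdots$; here $\overline{b}_0=\overline{i\overline{b}}=-ib$, so $-i\overline{b}_0+n\overline{a}_0=-i(-ib)+n(-1)=-b-n$, wait—one must be careful: $-i\overline{b}_0 = -i(-ib) = i^2 b\cdot(-1)$... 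I would carefully track that $-i\overline{b}_0+n\overline{a}_0 = -(b+n)\cdot(-1)$, so that after accounting for the overall sign and the factor $z^2$ coming from $\widehat{A}$, the coefficient of $\Phi_n(z)$ becomes $(b+n)\alpha_{n-1}$; the remaining terms of $W_n$ vanish since $\p_{n,n}=0$, $a_3=0$, $\s_{n,n+1}=0$, $\R_{n,n}=0$. Similarly $Y_n(z)=-i\overline{b}_0z^2+(\overline{\s}_{n,n}-n\overline{a}_1)z+\overline{\s}_{n,n+1}+\cdots$; with $\overline{\s}_{n,n}=n$, $\overline{a}_1=1$, $\overline{\s}_{n,n+1}=0$, $a_2=a_3=0$, the middle and constant terms drop, leaving $Y_n(z)=-i\overline{b}_0z^2=-ib z^2$... again I must verify the sign so that division of \eqref{first-derivative-2} by the common factor $-z^2(z-1)$, i.e. rewriting it as $(1-z)(\Phi_n^*)'(z)=\{W_n(z)\Phi_n(z)+Y_n(z)\Phi_n^*(z)\}/z^2$, produces exactly $(b+n)\alpha_{n-1}\Phi_n(z)+b\,\Phi_n^*(z)$.

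The only real obstacle is bookkeeping with the complex conjugates of the coefficients of $A$ and $B$ and with the many sign conventions built into $\widehat{A}$, $U_n$, $V_n$, $W_n$, $Y_n$; there is no conceptual difficulty, since everything reduces to substitution into Theorem \ref{Thm-main-firstorder-edo} and cancellation of the polynomial factor $z^{k}(z-1)$ that is common to both sides of \eqref{first-derivative-1} and \eqref{first-derivative-2}. One should also note that for $n=0,1,2$ the identities can be checked directly from $\Phi_0=1$, $\Phi_1(z)=z-\overline{\ell}_{1,0}=z+\alpha_0$, etc., so the restriction $n\geqslant 3$ inherited from Theorem \ref{Thm1-main} does not actually limit the validity of the final statement.
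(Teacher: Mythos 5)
Your proposal is exactly the paper's proof: specialize Theorem \ref{Thm-main-firstorder-edo} with $a_0=-1$, $a_1=1$, $b_0=i\overline{b}$, $b_1=i(b+1)$ and the vanishing coefficients $\s_{n,n+1}=\p_{n,n}=\R_{n,n}=0$, obtaining $\widehat{A}(z)=-z^2(z-1)$, $U_n(z)=nz-(\overline{b}+n)$, $V_n(z)=-(\overline{b}+n)\overline{\alpha}_{n-1}$, $W_n(z)=-(b+n)\alpha_{n-1}z^2$, $Y_n(z)=-bz^2$, and then cancel $z^2$ in the second equation. Your scratch evaluations of $-i\overline{b}_0$ contain sign slips (the correct value is $-i\overline{b}_0=-i(-ib)=-b$, so $Y_n(z)=-bz^2$, not $-ibz^2$), but since you flag them for verification and the final coefficients you state are the correct ones, the argument goes through as intended.
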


\begin{proof}  If $A$ and  $B$ are as in \eqref{AeB-Ranga2010}, from Theorem \ref{Thm-main-firstorder-edo} we get $\widehat{A}(z)=-z^2(z-1)$, $U_n(z)=nz-(\overline{b}+n)$, $V_n(z)=-(\overline{b}+n)\overline{\alpha}_{n-1}$, $W_n(z)=-(b+n)\alpha_{n-1}z^2$ and $Y_n(z)=-bz^2$. 
\end{proof}

\begin{remark} Let $\tau_n={\Phi_n(1)}/{\Phi_n^{\ast}(1)}$. By taking $z=1$ in the second equation in Theorem \ref{Thm-aplic-medidaranga-edo1}, it follows that $\tau_n=-b[(b+n)\alpha_{n-1}]^{-1}$ and 
    ${(1-z)(\Phi_n^{\ast})'(z)}[{(b+n)\alpha_{n-1}}]^{-1}=\Phi_n(z)-\tau_n\Phi_{n}^{\ast}(z)$.
Since $|\tau_n|=1$, $(1-z)(\Phi_n^{\ast})'(z)$ is a para-orthogonal polynomial. Para-orthogonal polynomials are studied in detail, for instance, in \cite{jones_moment_1989,COSTA201314}, and references therein.
\end{remark}

\begin{corollary}\label{Edo-medida-ranga} 
The polynomials $\Phi_n$ and $\Phi_n^{\ast}$ satisfy
 \begin{align*}
     & z(1-z)\Phi_n''(z)+[(n-b-2)z-\overline{b}-n+1]\Phi_n'(z)+n(1+b)\Phi_n(z)=0, \\
     & z(1-z)(\Phi_n^{\ast})''(z) +[(n-b-1)z-(\overline{b}+n)](\Phi_n^{\ast})'(z)+nb\,\Phi_n^{\ast}(z)=0, 
 \end{align*}
 where $b=\lambda+i\eta$.
\end{corollary}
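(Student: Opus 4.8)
The plan is to derive both second order differential equations of Corollary~\ref{Edo-medida-ranga} directly from the first order equations of Theorem~\ref{Thm-aplic-medidaranga-edo1}, exploiting the fact that for the weight \eqref{medida-ranga} the coefficients $V_n(z)$ and $Y_n(z)$ produced by Theorem~\ref{Thm-main-firstorder-edo} are very simple: $V_n(z)=-(\overline{b}+n)\overline{\alpha}_{n-1}$ is a nonzero constant (hence $V_n'\equiv 0$) and $Y_n(z)=-bz^2$. In particular $V_n'(z)=0$, so Theorem~\ref{Thm3-second-order-1} immediately yields a genuine linear second order equation for $\Phi_n$, and I only need to substitute the explicit polynomials $zA(z)=z^2(z-1)$, $\widehat{A}(z)=-z^2(z-1)$, $U_n(z)=nz-(\overline{b}+n)$, $V_n(z)=-(\overline{b}+n)\overline{\alpha}_{n-1}$, $W_n(z)=-(b+n)\alpha_{n-1}z^2$, $Y_n(z)=-bz^2$ into the formula there. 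For the $\Phi_n^{\ast}$ equation I would symmetrically use Theorem~\ref{Thm4-second-order-1}, noting that $W_n'(z)=-2(b+n)\alpha_{n-1}z$ need not vanish but $W_n\not\equiv 0$, so the step-function $\Theta(|W_n|)$ equals $1$ and the stated formula applies with a $-W_n'(z)\Phi_n(z)$ term that must then be eliminated.

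Concretely, for the first equation I would plug into Theorem~\ref{Thm3-second-order-1}: the leading term is $zA(z)\Phi_n''(z)=z^2(z-1)\Phi_n''(z)$; the coefficient of $\Phi_n'(z)$ is $[zA(z)]'-U_n(z)+zA(z)Y_n(z)[\widehat{A}(z)]^{-1}$, and since $\widehat{A}(z)=-zA(z)$ the last term collapses to $-Y_n(z)=bz^2$, giving $[3z^2-2z]-[nz-(\overline{b}+n)]+bz^2=(3+b)z^2-(n+2)z+(\overline{b}+n)$; the coefficient of $\Phi_n(z)$ is $-\{[Y_nU_n-V_nW_n][\widehat{A}]^{-1}+U_n'\}$, and the crucial cancellation is that $Y_n(z)U_n(z)-V_n(z)W_n(z)$ is divisible by $z^2(z-1)$ — indeed $Y_nU_n=-bz^2(nz-\overline{b}-n)$ and $V_nW_n=(\overline{b}+n)\overline{\alpha}_{n-1}(b+n)\alpha_{n-1}z^2$, and using $|\alpha_{n-1}|^2\,(\overline b+n)(b+n)=$ (something that makes the bracket vanish at $z=1$ and carry a factor $z$) one gets a clean polynomial quotient. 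After dividing the whole equation by the common factor $z(z-1)$ (all terms carry it once the dust settles) and flipping an overall sign, the announced form $z(1-z)\Phi_n''+[(n-b-2)z-\overline b-n+1]\Phi_n'+n(1+b)\Phi_n=0$ should emerge. The analogous substitution into Theorem~\ref{Thm4-second-order-1}, followed by using the first equation of Theorem~\ref{Thm-aplic-medidaranga-edo1} to rewrite the residual $\Phi_n$ in terms of $\Phi_n^{\ast}$ and $(\Phi_n^{\ast})'$, gives the second stated equation.

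The main obstacle is the bookkeeping in the cancellation $Y_n(z)U_n(z)-V_n(z)W_n(z)\equiv 0 \pmod{z^2(z-1)}$ (and the companion $V_n(z)W_n(z)-U_n(z)Y_n(z)\equiv 0 \pmod{z(z-1)}$ in the reversed version): this requires knowing the precise value of $(\overline b+n)(b+n)|\alpha_{n-1}|^2$, which follows from the explicit Verblunsky coefficients $\alpha_{n-1}=-(b)_n/(\overline b+1)_n$ recorded just before Theorem~\ref{Thm-aplic-medidaranga-edo1}, giving $|\alpha_{n-1}|^2=(b)_n(\overline b)_n/[(\overline b+1)_n(b+1)_n]$ and hence $(b+n)(\overline b+n)|\alpha_{n-1}|^2 = b\overline b\,(b+1)_{n-1}(\overline b+1)_{n-1}/[(b+1)_n(\overline b+1)_n]\cdot(\ldots)$ — but in fact one does not even need the closed form, only the constraint $\kappa_n^2/\kappa_{n+1}^2=1-|\alpha_n|^2$ together with $\langle A\Phi_n',1\rangle$-type identities from Theorem~\ref{Thm1-main}; alternatively, one observes directly that evaluating the first-order pair at $z=1$ forces a relation between $(b+n)\alpha_{n-1}$ and $(\overline b+n)\overline\alpha_{n-1}$ via $\tau_n$ with $|\tau_n|=1$, which is exactly the identity needed to make the quotient polynomial. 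Everything else is routine polynomial division and sign-tracking, and one should double-check the final equations degenerate correctly at $\eta=0$ to the known circular Jacobi differential equations.
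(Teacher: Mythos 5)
Your overall strategy is exactly the paper's: since $V_n'\equiv 0$, Theorem \ref{Thm3-second-order-1} already gives a linear equation for $\Phi_n$, and the only nontrivial input is the identity $|b+n|^2|\alpha_{n-1}|^2=|b|^2$ (which, as you note, can be read off either from the closed form of $\alpha_{n-1}$ or from $|\tau_n|=1$ after evaluating the first-order pair at $z=1$; the paper cites its earlier work for it), after which $Y_n(z)U_n(z)-V_n(z)W_n(z)=bn\,\widehat{A}(z)$ and the quotient by $\widehat{A}$ is the constant $bn$. However, your concrete substitution contains an error that derails the computation as written: with $A(z)=z-1$ one has $zA(z)=z(z-1)$, not $z^2(z-1)$ (compare the left-hand side of the first equation in Theorem \ref{Thm-aplic-medidaranga-edo1}), and $\widehat{A}(z)=-z^2(z-1)=-z\cdot zA(z)$, not $-zA(z)$. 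Hence $zA(z)Y_n(z)[\widehat{A}(z)]^{-1}=bz$ rather than $bz^2$, and $[zA(z)]'=2z-1$ rather than $3z^2-2z$, so the coefficient of $\Phi_n'$ is $(2+b-n)z+\overline{b}+n-1$, of degree one. The equation delivered by Theorem \ref{Thm3-second-order-1} is then already the stated one up to an overall sign; there is no common factor $z(z-1)$ to divide out, and the quadratic coefficient $(3+b)z^2-(n+2)z+(\overline{b}+n)$ you obtain is simply wrong (your proposed division by $z(z-1)$ would fail, since that polynomial is not divisible by $z(z-1)$).

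For the reversed equation your route differs mildly from the paper's: the paper first cancels the common factor $z^2$ from the second first-order relation, redefining $\widehat{A}(z)=z-1$, $W_n(z)=(b+n)\alpha_{n-1}$, $Y_n(z)=b$, so that $W_n$ is constant and the residual $-W_n'\Phi_n$ term vanishes; you instead keep $W_n(z)=-(b+n)\alpha_{n-1}z^2$ and propose to eliminate the resulting $\Phi_n$ term afterwards. That alternative does work, but you must use the \emph{second} first-order equation, $(1-z)(\Phi_n^{\ast})'(z)=(b+n)\alpha_{n-1}\Phi_n(z)+b\,\Phi_n^{\ast}(z)$, to express $\Phi_n$ in terms of $\Phi_n^{\ast}$ and $(\Phi_n^{\ast})'$; the first equation, which you cite, only relates $\Phi_n'$ to $\Phi_n$ and $\Phi_n^{\ast}$ and cannot do this. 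With these two corrections both computations close and reproduce the stated equations.
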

\begin{proof} Notice that $V_n'(z)=0$ and $Y_{n}(z)U_{n}(z) - V_{n}(z)W_{n}(z)=bn\widehat{A}(z)+z^2(|b|^2-|b+n|^2|\alpha_{n-1}|^2)$.  From Proposition 4.2 and (4.11) of \cite{Bracciali-Rampazzi-SRibeiro-JAT2023}, we have  $|b+n|^2|\alpha_{n-1}|^2-|b|^2=0$ and we obtain the second differential equation for $\Phi_n$ from Theorem \ref{Thm3-second-order-1}. 

The second order differential equation for $\Phi_n^{\ast}$ can be derived from Theorem \ref{Thm4-second-order-1} by redefining $\widehat{A}(z)=(z-1)$, $W_n(z)=(b+n)\alpha_{n-1}$ and $Y_n(z)=b$. This leads to a simplification of common $z^2$ factors in the first order differential equation of Theorem \ref{Thm-aplic-medidaranga-edo1}. Thus the result follows from $|b+n|^2|\alpha_{n-1}|^2-|b|^2=0$. 
\end{proof}

\begin{remark} The differential equations from Corollary \ref{Edo-medida-ranga} can be inferred from the results of \cite{Ranga-PAMS2010}, since $\Phi_n$ in this case is expressed in terms of the Gauss hypergeometric  as
\begin{equation*}
    \Phi_{n}(b;z)=\frac{(b+\overline{b}+1)_n}{(b+1)_n} \,_2F_1(-n,b+1;b+\overline{b}+1;1-z), \ b=\lambda+i\eta.
\end{equation*}
When $\eta=0$,
these coincide with the (monic) circular Jacobi orthogonal polynomials (see Example 1 in \cite{ISMAIL-JAT_2001}). 
\end{remark}

\subsection{Example 2} \label{Subsec-42-example2}

Now we consider the following generalization of the semiclassical weight function of the previous example
\begin{equation}\label{bessel-generalizado-2}
    w(\theta)=e^{t\cos(\theta)}e^{-\eta \theta}[\sin^2(\theta/2)]^\lambda, \ t\in\mathbb{R}, \ \eta\in\mathbb{R} \ \mbox{and} \ \lambda>-1/2.
\end{equation}
If $b=\lambda+i\eta$, this weight function satisfy \eqref{Eq-Tipo-Pearson-1} with 
\begin{equation}\label{A-grau2-B-grau3}
    A(z)=z(z-1), \ \mbox{and} \ B(z)=i\{(t/2)z^3+[b+2-(t/2)]z^2+[\overline{b}-1-(t/2)]z+(t/2)\}.
\end{equation}
Notice that if $\lambda=\eta=0$ we obtain the weight function of modified Bessel polynomials \cite{periwal1990unitary,vanassche2018orthogonal}. If $t\neq 0$ and $\lambda\neq 0$ or $\eta\neq 0$ the weight function belongs to the class $(2,3)$. The coefficients from Theorem \ref{Thm1-main} are
\begin{align*}
 & \s_{n,n-1}=-(t/2)(1-|\alpha_{n-1}|^2), \ \s_{n,n}=-(\overline{b}+n)+(t/2)(1+{\alpha}_n\overline{\alpha}_{n-1}),  \\
 & \R_{n,n}=-(t/2)\overline{\alpha}_{n+1}(1-|\alpha_n|^2)-(b+1-t/2)\overline{\alpha}_n, \  \\
& \s_{n,n+1}=n-(t/2)\overline{\alpha}_n\alpha_{n-1}, \ \mbox{and} \ \p_{n,n}=-(t/2)\overline{\alpha}_n.
 \end{align*}
 From Corollary \ref{Coro1-formula} we also obtain 
 \begin{align*}
\s_{n,n}=&-n-\ell_{n,n-1}-\{b+n+1-(t/2)(1+\overline{\alpha}_n{\alpha}_{n-1})\}\overline{\alpha}_n{\alpha}_{n-1}\\
&- (t/2)[(1-|\alpha_{n-1}|^2)\overline{\alpha}_n\alpha_{n-2}+(1-|\alpha_n|^2)\overline{\alpha}_{n+1}\alpha_{n-1}],
\end{align*}
and
\begin{equation*}
    \s_{n,n+1}=(n+1+\overline{\ell}_{n+1,n})({1-|\alpha_n|^2})^{-1}-b-1+(t/2)[1+\overline{\alpha}_{n+1}\alpha_n+\alpha_{n+1}\overline{\alpha}_{n-1}].
\end{equation*}
The two formulas for $\s_{n,n}$ lead to
\begin{align*}
    \ell_{n,n-1}=& \ \overline{b}+ (b+n+1)\overline{\alpha}_n\alpha_{n-1} +(t/2)[\alpha_{n-1}\overline{\alpha}_n-\overline{\alpha}_{n-1}\alpha_n-1] \nonumber\\
    &-(t/2)\{(1-|\alpha_{n-1}|^2)\overline{\alpha}_n\alpha_{n-2}+(1-|\alpha_{n}|^2)\overline{\alpha}_{n+1}\alpha_{n-1}-(\overline{\alpha}_n\alpha_{n-1})^2\}.
\end{align*}

From Theorem \ref{Coro-2} we obtain the following difference equation.

\begin{theorem}\label{Theo-verb-new-new} If $b=\lambda+i\eta$, the Verblunsky coefficients $\alpha_n$ satisfy 
\begin{align*}
    (\overline{b}&+n+1)\alpha_n-(b+n)\alpha_{n-1} \\
    &=-\frac{t}{2}\big[(1-|\alpha_n|^2)\alpha_{n+1}-(1-|\alpha_{n-1}|^2)\alpha_{n-2}+(1+\overline{\alpha}_n\alpha_{n-1})\alpha_{n-1}-(1+{\alpha}_n\overline{\alpha}_{n-1})\alpha_{n}\big].
\end{align*}

\end{theorem}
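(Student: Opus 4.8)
The plan is to specialize the general difference equation of Theorem~\ref{Coro-2} to the polynomials $A(z)=z(z-1)$ and $B(z)$ given in \eqref{A-grau2-B-grau3}. First I would read off the coefficients in \eqref{defin-A-B-grau3}: from $A$ we have $a_3=a_0=0$, $a_2=1$, $a_1=-1$, and from $B(z)=i\{(t/2)z^3+[b+2-(t/2)]z^2+[\overline{b}-1-(t/2)]z+(t/2)\}$ we have $b_3=i(t/2)$, $b_2=i[b+2-(t/2)]$, $b_1=i[\overline{b}-1-(t/2)]$, $b_0=i(t/2)$. Since $A(1)=1\cdot(1-1)=0$, the hypothesis $A(1)[w(2\pi)-w(0)]=0$ of Theorems~\ref{Thm1-main} and \ref{Coro-2} is automatically satisfied, so the difference equation of Theorem~\ref{Coro-2} applies (for $n$ in its range of validity, $n\geqslant 3$).

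Next I would compute the quantities entering the displayed equation of Theorem~\ref{Coro-2}. Because $t,\lambda,\eta$ are real, the coefficients of $A$ are real, so $\overline{a}_k=a_k$; moreover $a_3=a_0=0$ annihilates the terms $2\overline{a}_3\overline{\ell}_{n,n-1}$ and $2\overline{a}_0\ell_{n,n-1}$, so the formula for $\ell_{n,n-1}$ derived just above the theorem is not needed here. For $B$ I would simplify the conjugates: $i\overline{b}_3=t/2$, $i\overline{b}_2=\overline{b}+2-(t/2)$, $i\overline{b}_1=b-1-(t/2)$, $i\overline{b}_0=t/2$, each collapsing to a simple linear expression in $b,\overline{b},t$ because $b_k/i$ already has that form.

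Then I would substitute into Theorem~\ref{Coro-2}. The left-hand side becomes $-(t/2)(1-|\alpha_{n-1}|^2)\alpha_{n-2}+(t/2)(1-|\alpha_n|^2)\alpha_{n+1}$, while the first brace on the right collapses to $\{n+1+\overline{b}-(t/2)-(t/2)\alpha_n\overline{\alpha}_{n-1}\}$ and the second to $\{n+b-(t/2)-(t/2)\overline{\alpha}_n\alpha_{n-1}\}$. Bringing the $\alpha_n$ and $\alpha_{n-1}$ terms to the left produces $(\overline{b}+n+1)\alpha_n-(b+n)\alpha_{n-1}$ there, and on the right $-(t/2)$ times $(1-|\alpha_n|^2)\alpha_{n+1}-(1-|\alpha_{n-1}|^2)\alpha_{n-2}-\alpha_n-\alpha_n^2\overline{\alpha}_{n-1}+\alpha_{n-1}+\overline{\alpha}_n\alpha_{n-1}^2$. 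Factoring $\alpha_{n-1}+\overline{\alpha}_n\alpha_{n-1}^2=(1+\overline{\alpha}_n\alpha_{n-1})\alpha_{n-1}$ and $-\alpha_n-\alpha_n^2\overline{\alpha}_{n-1}=-(1+\alpha_n\overline{\alpha}_{n-1})\alpha_n$ gives exactly the bracket in the statement, completing the proof.

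There is no genuine obstacle: the computation is a direct substitution. The only points requiring care are the conjugation of the purely imaginary coefficients of $B$ (and the attendant signs), and the correct grouping of the cubic monomials $\alpha_n^2\overline{\alpha}_{n-1}$ and $\overline{\alpha}_n\alpha_{n-1}^2$ so that they reassemble into the $(1+\overline{\alpha}_n\alpha_{n-1})\alpha_{n-1}$ and $(1+\alpha_n\overline{\alpha}_{n-1})\alpha_n$ forms displayed in the theorem.
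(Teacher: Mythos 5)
Your proposal is correct and follows exactly the paper's route: the paper obtains Theorem \ref{Theo-verb-new-new} by specializing the general difference equation of Theorem \ref{Coro-2} to $A(z)=z(z-1)$ and the cubic $B(z)$ of \eqref{A-grau2-B-grau3}, which is precisely what you do. One small slip: with $\overline{a}_1=-1$ and $-i\overline{b}_1=-b+1+t/2$ the second brace in Theorem \ref{Coro-2} evaluates to $-(n+b)+t/2+(t/2)\overline{\alpha}_n\alpha_{n-1}$, i.e.\ the negative of what you wrote, but since that brace enters the identity multiplied by $-\alpha_{n-1}$ your final expansion and the grouping into $(1+\overline{\alpha}_n\alpha_{n-1})\alpha_{n-1}-(1+\alpha_n\overline{\alpha}_{n-1})\alpha_n$ are nevertheless correct.
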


\begin{remark}\label{Verblunsky-paper2010}
    Notice that if $t=0$, we recover the weight function \eqref{medida-ranga} from the previous example, that also satisfy \eqref{Eq-Tipo-Pearson-1} with $A(z)=z(z-1)$ and $B(z)=i\{(b+2)z^2+(\overline{b}-1)z\}$. In view of Theorem \ref{Theo-verb-new-new}, the Verblunsky coefficients satisfy $(\overline{b}+n+1)\alpha_n=(b+n)\alpha_{n-1}$.
\end{remark}

Considering $A(z)$ and $B(z)$ as in \eqref{A-grau2-B-grau3} from Theorem \ref{Thm-main-firstorder-edo} we obtain the first order differential equations.

 \begin{theorem}\label{EDO-ordem1-newnew} Let $\Phi_n$ be the MOPUC with respect to the weight function \eqref{bessel-generalizado-2}, $b=\lambda+i\eta$ and $\s_{n,n}=-(\overline{b}+n)+(t/2)(1+{\alpha}_n\overline{\alpha}_{n-1})$. Then 
 \begin{align*}
 z^2(z-1)&\Phi'_n(z) = U_{n}(z)\Phi_n(z)+V_{n}(z)\Phi_n^{\ast}(z), \\
 z(z-1)(&\Phi_n^{\ast})'(z) =W_{n}(z)\Phi_n(z)+Y_{n}(z)\Phi_n^{\ast}(z),
 \end{align*}
 where
 \begin{align*}
 U_n(z)& =[n-(t/2)\overline{\alpha}_n\alpha_{n-1}]z^2+\s_{n,n}z-t/2, \\
 V_n(z)&=-(t/2)\overline{\alpha}_nz^2-[(1-\overline{\s}_{n,n})\overline{\alpha}_n+(t/2)(1-|\alpha_n|^2)\overline{\alpha}_{n+1}]z-(t/2)\overline{\alpha}_{n-1},
 \\
 W_n(z)&=-(t/2)\alpha_{n-1}z^2-[(1-\s_{n,n})\alpha_n+(t/2)(1-|\alpha_n|^2){\alpha}_{n+1}]z-(t/2)\alpha_n, \\
 Y_{n}(z)&=-\{(t/2)z^2-(\overline{\s}_{n,n}+n)z+(t/2)\alpha_n\overline{\alpha}_{n-1}\}.   
 \end{align*}

 \end{theorem}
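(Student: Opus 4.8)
The plan is to apply Theorem~\ref{Thm-main-firstorder-edo} directly, specializing the general polynomials $A(z)$ and $B(z)$ to the forms given in \eqref{A-grau2-B-grau3}. Here the Pearson pair has $a_3=0$, $a_2=a_1=1$ (up to the overall scaling $A(z)=z^2-z$, so $a_2=1$, $a_1=-1$, $a_0=0$) and $b_3=it/2$, $b_2=i[b+2-(t/2)]$, $b_1=i[\overline{b}-1-(t/2)]$, $b_0=it/2$, where one must be careful that the $b_k$ in \eqref{defin-A-B-grau3} are the coefficients of $B$, so with $B(z)=i\{\cdots\}$ one has, e.g., $ib_3 = i\cdot i(t/2) = -t/2$, etc. First I would substitute these values into the closed-form expressions for $U_n(z)$, $V_n(z)$, $W_n(z)$, $Y_n(z)$ and $\widehat A(z)$ displayed in Theorem~\ref{Thm-main-firstorder-edo}.

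The bookkeeping proceeds term by term. For $\widehat A(z)=\overline a_0 z^3+\overline a_1 z^2+\overline a_2 z+\overline a_3$ with $a_0=0$, $a_1=-1$, $a_2=1$, $a_3=0$ we get $\widehat A(z)=-z^2+z=-z(z-1)$, which matches the stated left-hand side $z(z-1)(\Phi_n^*)'(z)$ after moving the sign (recall \eqref{first-derivative-2} reads $-\widehat A(z)(\Phi_n^*)'(z)=\cdots$). Similarly $zA(z)=z\cdot z(z-1)=z^2(z-1)$, giving the left side of the first equation. For $U_n(z)=na_3 z^3+(\s_{n,n+1}+na_3\overline\alpha_{n+1}\alpha_n)z^2+\s_{n,n}z+ib_0+na_0$, the $a_3$ and $a_0$ terms vanish, $ib_0=-t/2$, and $\s_{n,n+1}=n-(t/2)\overline\alpha_n\alpha_{n-1}$ from the coefficient list just above Theorem~\ref{EDO-ordem1-newnew}; this reproduces $U_n(z)=[n-(t/2)\overline\alpha_n\alpha_{n-1}]z^2+\s_{n,n}z-t/2$. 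The expressions for $V_n$, $W_n$, $Y_n$ are obtained analogously: one plugs $ib_3-(n-2)a_3 = -t/2$, $\R_{n,n}=-(t/2)\overline\alpha_{n+1}(1-|\alpha_n|^2)-(b+1-t/2)\overline\alpha_n$, $\p_{n,n}=-(t/2)\overline\alpha_n$, and the already-established value of $\s_{n,n}$, then collects powers of $z$. For $V_n$ one should check that the middle coefficient $-(\s_{n,n+1}\overline\alpha_n+na_3\overline\alpha_{n+1}-\R_{n,n})$ simplifies to $-[(1-\overline{\s}_{n,n})\overline\alpha_n+(t/2)(1-|\alpha_n|^2)\overline\alpha_{n+1}]$ — this uses $\s_{n,n+1}=n-(t/2)\overline\alpha_n\alpha_{n-1}$ together with the explicit form of $\R_{n,n}$, and the identity $\overline{\s}_{n,n}=-(b+n)+(t/2)(1+\overline\alpha_n\alpha_{n-1})$ (noting $\overline{\overline b}=b$).

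The only genuinely delicate point is the appearance of the \emph{conjugated} quantities $\overline{\s}_{n,n}$ in $V_n$ and $Y_n$ versus $\s_{n,n}$ in $U_n$: one must track which of the formulas in Theorem~\ref{Thm-main-firstorder-edo} carry bars, and verify that, e.g., in $V_n$ the coefficient $(ib_3+a_3)\overline\alpha_{n+1}(1-|\alpha_n|^2)$ hidden inside $\R_{n,n}$ combines correctly with $-\s_{n,n+1}\overline\alpha_n$ so that the surviving $z^1$-coefficient is exactly $-[(1-\overline\s_{n,n})\overline\alpha_n+(t/2)(1-|\alpha_n|^2)\overline\alpha_{n+1}]$; the reconciliation rests on $\s_{n,n+1}=n-(t/2)\overline\alpha_n\alpha_{n-1}$ and on recognizing $n - (t/2)\overline\alpha_n\alpha_{n-1} = 1 - \overline{\s}_{n,n} + (\text{lower order})$ is \emph{not} what is needed — rather one simply expands $-\s_{n,n+1}\overline\alpha_n - \R_{n,n}$ and notes that the $\overline\alpha_n$-coefficient equals $-(n-(t/2)\overline\alpha_n\alpha_{n-1})+(b+1-t/2) = -(1-\overline{\s}_{n,n})$ by direct substitution of $\overline{\s}_{n,n}$. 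This is routine algebra but sign- and bar-sensitive. Once all four polynomial coefficients are verified against the claimed formulas, the two displayed differential equations are immediate from \eqref{first-derivative-1} and \eqref{first-derivative-2}, after dividing the second by $-1$. I expect no conceptual obstacle; the main risk is a transcription error in the $b_k$'s because $B$ is written with an overall factor of $i$, so I would double-check at the outset that $ib_0=ib_3=-t/2$, $ib_2=-(b+2-t/2)$, $ib_1=-(\overline b-1-t/2)$.
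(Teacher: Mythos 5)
Your proposal is correct and is exactly the paper's route: the paper offers no separate proof of this theorem, simply specializing Theorem~\ref{Thm-main-firstorder-edo} to $A(z)=z(z-1)$ (so $a_3=a_0=0$, $a_2=1$, $a_1=-1$) and to the $b_k$ read off from \eqref{A-grau2-B-grau3} with $ib_0=ib_3=-t/2$, $ib_2=-(b+2-t/2)$, $ib_1=-(\overline b-1-t/2)$, which is precisely the substitution you carry out. One cosmetic slip: in your check of the $z^1$-coefficient of $V_n$ the $\overline\alpha_n$-part should read $-(n-(t/2)\overline\alpha_n\alpha_{n-1})-(b+1-t/2)$ rather than $\cdots+(b+1-t/2)$, which is what actually equals $-(1-\overline{\s}_{n,n})$; your stated conclusion is the right one, so this does not affect the argument.
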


\begin{remark} If $t=0$ in Theorem \ref{EDO-ordem1-newnew} we recover the results of Theorem \ref{Thm-aplic-medidaranga-edo1}.
\end{remark}

The following result is needed to simplify the coefficients that will appear in Theorem \ref{edo-2-order-bessel-generalized} and also leads to the discrete Painlevé II equation \eqref{tipo-painleve-absolute-value}.

\begin{lemma} \label{lema-weight-newnew}The Verblunsky coefficients satisfy 
\begin{align*}
\big|b+it \Im(\alpha_n\overline{\alpha}_{n-1})\big| = \Big|(b+n)\alpha_{n-1} + (t/2)\left[(1-|\alpha_{n-1}|^2)\alpha_{n-2}+\alpha_n-\overline{\alpha}_n(\alpha_{n-1})^2\right]\Big|.
\end{align*}
\end{lemma}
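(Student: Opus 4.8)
The plan is to extract this identity directly from the $\ell_{n,n-1}$ formula obtained just above from the two expressions for $\s_{n,n}$, combined with the $\ell_{n,n-1}$ recurrence in \eqref{prop-coef-n-1}. Recall that we already have
\begin{align*}
\ell_{n,n-1}=& \ \overline{b}+ (b+n+1)\overline{\alpha}_n\alpha_{n-1} +(t/2)[\alpha_{n-1}\overline{\alpha}_n-\overline{\alpha}_{n-1}\alpha_n-1] \\
&-(t/2)\{(1-|\alpha_{n-1}|^2)\overline{\alpha}_n\alpha_{n-2}+(1-|\alpha_{n}|^2)\overline{\alpha}_{n+1}\alpha_{n-1}-(\overline{\alpha}_n\alpha_{n-1})^2\}.
\end{align*}
First I would multiply the relation $\ell_{n,n-1}=\ell_{n-1,n-2}+\overline{\alpha}_{n-1}\alpha_{n-2}$ from \eqref{prop-coef-n-1} by $\alpha_{n-1}$ to relate $\alpha_{n-1}\ell_{n,n-1}$ to $\alpha_{n-1}\ell_{n-1,n-2}$. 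Alternatively, and more cleanly, I would use the first line of \eqref{prop-coef-n-1}, namely $-\overline{\ell}_{n,1}=(1-|\alpha_{n-1}|^2)\alpha_{n-2}+\alpha_{n-1}\ell_{n,n-1}$ (this is the form invoked in the proof of Theorem \ref{Coro-2}), to write
$$
\alpha_{n-1}\ell_{n,n-1} = -\overline{\ell}_{n,1} - (1-|\alpha_{n-1}|^2)\alpha_{n-2}.
$$

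Next I would multiply the boxed expression for $\ell_{n,n-1}$ by $\alpha_{n-1}$ and substitute the left-hand side using the identity just written, so that all terms are rearranged to isolate a combination of $\alpha_{n-2},\alpha_{n-1},\alpha_n,\alpha_{n+1}$. The claim of the lemma compares the modulus of $b+it\,\Im(\alpha_n\overline{\alpha}_{n-1})$ with the modulus of the right-hand side, so the natural route is to show that the two quantities inside the moduli are complex conjugates of each other (up to a unimodular factor) — this is consistent with the pattern in Example 1, where $|(b+n)\alpha_{n-1}|=|b|$ came from a conjugate-symmetry relation. Concretely, I would compute $\overline{\alpha}_{n-1}$ times the right-hand side and show it equals $b+it\,\Im(\alpha_n\overline{\alpha}_{n-1})$ plus $|\alpha_{n-1}|^2$ times its conjugate, or more precisely that
$$
\overline{\alpha}_{n-1}\Big[(b+n)\alpha_{n-1}+(t/2)[(1-|\alpha_{n-1}|^2)\alpha_{n-2}+\alpha_n-\overline{\alpha}_n(\alpha_{n-1})^2]\Big]
$$
simplifies, via the $\ell_{n,n-1}$ identity above, to something whose real part is $\lambda$ and whose imaginary part involves $\eta$ plus the $t\,\Im$ correction; squaring moduli then gives the stated equality after the $(1-|\alpha_{n-1}|^2)$ factors cancel.

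The term $it\,\Im(\alpha_n\overline{\alpha}_{n-1}) = (t/2)(\alpha_n\overline{\alpha}_{n-1}-\overline{\alpha}_n\alpha_{n-1})$ on the left is precisely the trace of the two $(t/2)$-terms $\alpha_{n-1}\overline{\alpha}_n$ and $-\overline{\alpha}_{n-1}\alpha_n$ appearing in the $\ell_{n,n-1}$ formula, which strongly suggests the bookkeeping will close. The main obstacle I anticipate is purely algebraic: carefully tracking the $(t/2)$-terms that carry factors $(1-|\alpha_{n-1}|^2)$, $(1-|\alpha_n|^2)$, $\overline{\alpha}_{n+1}\alpha_{n-1}$, and $(\overline{\alpha}_n\alpha_{n-1})^2$, and verifying that after multiplication by $\overline{\alpha}_{n-1}$ and use of the $-\overline{\ell}_{n,1}$ relation they reorganize exactly into $(t/2)\overline{\alpha}_{n-1}[(1-|\alpha_{n-1}|^2)\alpha_{n-2}+\alpha_n-\overline{\alpha}_n(\alpha_{n-1})^2]$ without leftover terms. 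Once the identity between the two bracketed quantities (up to the unimodular factor $\overline{\alpha}_{n-1}/|\alpha_{n-1}|$, or after multiplying through by $|\alpha_{n-1}|^2$ on both sides when $\alpha_{n-1}\neq 0$, with the $\alpha_{n-1}=0$ case handled trivially since both sides reduce to $|b|=|b|$) is established, taking absolute values finishes the proof.
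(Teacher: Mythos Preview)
Your approach has a genuine gap. The paper's proof does not work purely from the $\ell_{n,n-1}$ formula and the Szeg\H{o} recurrences; the key step is to evaluate the first-order differential equation of Theorem~\ref{EDO-ordem1-newnew} at $z=1$. Since $z^2(z-1)$ vanishes there, one gets $U_n(1)\Phi_n(1)+V_n(1)\Phi_n^{\ast}(1)=0$, and because $\Phi_n^{\ast}(1)=\overline{\Phi_n(1)}$ one always has $|\tau_n|=|\Phi_n(1)/\Phi_n^{\ast}(1)|=1$, whence $|U_n(1)|=|V_n(1)|$. One checks that $-\overline{U_n(1)}$ is exactly the left-hand bracket $b+it\,\Im(\alpha_n\overline{\alpha}_{n-1})$, and then the difference equation of Theorem~\ref{Theo-verb-new-new} is used to eliminate $\alpha_{n+1}$ from $-\overline{V_n(1)}$, producing the right-hand bracket. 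The modulus equality thus comes from the \emph{external} input $|\tau_n|=1$, not from algebraic bookkeeping alone.

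Your plan omits both of these ingredients (the $z=1$ evaluation and the use of Theorem~\ref{Theo-verb-new-new}) and tries to get the identity solely from the $\ell_{n,n-1}$ formula plus \eqref{prop-coef-n-1}. Two concrete problems arise. First, ``complex conjugates up to a unimodular factor'' is the same as ``equal modulus'', so that phrasing is circular rather than a method. Second, your proposed trivial check when $\alpha_{n-1}=0$ is incorrect: in that case the left side of the lemma is $|b|$, but the right side is $(|t|/2)\,|\alpha_{n-2}+\alpha_n|$, which is \emph{not} $|b|$ tautologically---it is a nontrivial constraint on the Verblunsky coefficients of this specific weight, and precisely the kind of information that the $z=1$ evaluation supplies. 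Without invoking $|\tau_n|=1$ (or an equivalent device), the manipulation you outline cannot close.
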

\begin{proof} By setting  $z=1$ in the first differential equation of Theorem \ref{EDO-ordem1-newnew}, and by defining $\tau_n=\Phi_n(1)/\Phi_n^{\ast}(1)$, since $|\tau_n|=1$, it follows that $\big|U_n(1)\big|^2=\big|V_n(1) \big|^2$, which is equivalent to
\begin{align*}
\big|b+(t/2)(\alpha_n&\overline{\alpha}_{n-1}  -\overline{\alpha}_n\alpha_{n-1})\big|^2 \\
& = \big|(1-\s_{n,n}){\alpha}_n+(t/2)(1-|\alpha_n|^2){\alpha}_{n+1}+(t/2)(\alpha_n+\alpha_{n-1})\big|^2,
\end{align*}
where $\s_{n,n}=-(\overline{b}+n)+(t/2)(1+{\alpha}_n\overline{\alpha}_{n-1})$. The result follows from  Theorem \ref{Theo-verb-new-new},  which guarantees that
\begin{equation*}
    (1-\s_{n,n})\alpha_n+\overline{\s}_{n,n}\alpha_{n-1}=-\frac{t}{2}[(1-|\alpha_n|^2)\alpha_{n+1}-(1-|\alpha_{n-1}|^2)\alpha_{n-2}].
\end{equation*}
\end{proof}

\begin{theorem}\label{edo-2-order-bessel-generalized} Under the hypotheses of Theorem \ref{EDO-ordem1-newnew}, let 
\begin{align*}
&f_{n}=(1-\s_{n,n})\alpha_{n}+(t/2)(1-|\alpha_n|^2)\alpha_{n+1}, \\
& \gamma_{n}=(t/2)(\s_{n,n}+\overline{\s}_{n,n}\overline{\alpha}_n\alpha_{n-1}+\overline{\alpha}_nf_n+\alpha_{n-1}\overline{f}_n).
\end{align*}
The polynomials $\Phi_n$ and $\Phi_n^{\ast}$ satisfy
 \begin{align*}
      z^2(z-1)&\Phi_n''(z)+[(t/2)z^3+(b+3-n-t/2)z^2+(\overline{b}+n-2-t/2)z+t/2]\Phi_n'(z) \\ 
      & -\{(t/2)nz^2+[n(b+2)+\gamma_n-t\overline{\alpha}_n\alpha_{n-1}]z-(t/2)n+\s_{n,n}-\overline{\gamma}_n\}\Phi_n(z) \\
      &=-[t\overline{\alpha}_nz+\overline{f}_n]\Phi_n^{\ast}(z),
 \end{align*}
 and 
 \begin{align*}
     z^2(1&-z)(\Phi_n^{\ast})''(z) \\ 
     &-\{(t/2)z^3+(b+2-n-t/2)z^2+(\overline{b}+n-1-t/2)z+t/2\}(\Phi_n^{\ast})'(z) \\
     & +\{(t/2)(n-2)z^2+[n(b+1)+\gamma_n+\overline{\s}_{n,n}]z-\overline{\gamma}_n-(t/2)n\}\Phi_n^{\ast}(z)\\
     & =(t\alpha_{n-1}z+f_n)\Phi_n(z).
 \end{align*}
\end{theorem}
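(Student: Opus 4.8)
The plan is to derive both second-order equations by specializing Theorems~\ref{Thm3-second-order-1} and~\ref{Thm4-second-order-1} to the data $A(z)=z(z-1)$, $\widehat{A}(z)=z^2(z-1)$, and the explicit $U_n,V_n,W_n,Y_n$ recorded in Theorem~\ref{EDO-ordem1-newnew}. First I would observe that the first-order system in Theorem~\ref{EDO-ordem1-newnew} is already written with a factor $z^2(z-1)$ (resp.\ $z(z-1)$) on the left, so before invoking the abstract second-order theorems one must reconcile the normalizations: the generic Theorem~\ref{Thm-main-firstorder-edo} uses $zA(z)=z^2(z-1)$ multiplying $\Phi_n'$ and $\widehat{A}(z)=z^2(z-1)$ multiplying $(\Phi_n^{\ast})'$, which matches the first equation of Theorem~\ref{EDO-ordem1-newnew} exactly but for the $\Phi_n^\ast$ equation there the left side carries $z(z-1)$, so one cancels a common factor of $z$ (as was done in Corollary~\ref{Edo-medida-ranga}) and works with the reduced $\widehat A(z)=z(z-1)$, $W_n,Y_n$ as given.

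Next I would carry out, for the $\Phi_n$ equation, the substitutions into Theorem~\ref{Thm3-second-order-1}. Here $zA(z)=z^2(z-1)$, so $[zA(z)]'=3z^2-2z$; combined with $-U_n(z)=-[n-(t/2)\overline{\alpha}_n\alpha_{n-1}]z^2-\s_{n,n}z+t/2$ and the term $zA(z)Y_n(z)[\widehat A(z)]^{-1}=Y_n(z)$ (since $zA(z)=\widehat A(z)=z^2(z-1)$ here), the coefficient of $\Phi_n'$ collapses to a cubic in $z$; plugging in $Y_n(z)=-(t/2)z^2+(\overline{\s}_{n,n}+n)z-(t/2)\alpha_n\overline{\alpha}_{n-1}$ and collecting powers of $z$ should reproduce $[(t/2)z^3+(b+3-n-t/2)z^2+(\overline{b}+n-2-t/2)z+t/2]$ after using $\s_{n,n}=-(\overline{b}+n)+(t/2)(1+\alpha_n\overline{\alpha}_{n-1})$. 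The coefficient of $\Phi_n(z)$ is $-\{[Y_nU_n-V_nW_n]/\widehat A+U_n'\}$; the point is that the degree-$4$ polynomial $Y_n(z)U_n(z)-V_n(z)W_n(z)$ must be divisible by $\widehat A(z)=z^2(z-1)$, the quotient being the quadratic appearing in the statement. To prove the divisibility I would check vanishing at $z=0$ (double root) and $z=1$: at $z=0$, $Y_n U_n - V_n W_n = (-(t/2)\alpha_n\overline{\alpha}_{n-1})(-t/2) - (-(t/2)\overline{\alpha}_{n-1})(-(t/2)\alpha_n)=0$, and the derivative at $0$ likewise vanishes by a short computation, while at $z=1$ one uses $|U_n(1)|^2=|V_n(1)|^2$ and the companion $|W_n(1)|$, $|Y_n(1)|$ relations — precisely Lemma~\ref{lema-weight-newnew} — together with $Y_n(1)U_n(1)=\overline{U_n(1)}\,\overline{V_n(1)}\cdots$; in practice $Y_n(1)=-\overline{U_n(1)}$ and $W_n(1)=-\overline{V_n(1)}$ up to the stated sign conventions, so $Y_nU_n-V_nW_n$ at $z=1$ is $-|U_n(1)|^2+|V_n(1)|^2=0$. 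After the division, identifying the quotient with $(t/2)nz^2+[n(b+2)+\gamma_n-t\overline{\alpha}_n\alpha_{n-1}]z-(t/2)n+\s_{n,n}-\overline{\gamma}_n$ is a matter of matching three coefficients, where the auxiliary $\gamma_n$ and $f_n$ are introduced exactly to package the cross terms $\overline{\alpha}_n f_n + \alpha_{n-1}\overline{f}_n$ that arise; finally $V_n'(z)=-t\overline{\alpha}_n z-[(1-\overline{\s}_{n,n})\overline{\alpha}_n+(t/2)(1-|\alpha_n|^2)\overline{\alpha}_{n+1}]=-t\overline{\alpha}_n z-\overline{f}_n$ gives the right-hand side, since $f_n=(1-\s_{n,n})\alpha_n+(t/2)(1-|\alpha_n|^2)\alpha_{n+1}$ by definition.

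For the $\Phi_n^\ast$ equation the procedure mirrors the above using Theorem~\ref{Thm4-second-order-1} with the reduced $\widehat A(z)=z(z-1)$ and $zA(z)=z^2(z-1)$: the coefficient of $(\Phi_n^\ast)'$ is $(\widehat A)'(z)+Y_n(z)-\widehat A(z)U_n(z)[zA(z)]^{-1}=(2z-1)+Y_n(z)-U_n(z)/z$, and one must note $U_n(z)/z$ is \emph{not} a polynomial — but $U_n(0)=-t/2\neq 0$ in general, so instead the correct reading is that in Theorem~\ref{Thm4-second-order-1} the factor $zA(z)$ is the one from the \emph{unreduced} normalization; concretely, multiplying the reduced $\Phi_n^\ast$ first-order equation $z(z-1)(\Phi_n^\ast)'=W_n\Phi_n+Y_n\Phi_n^\ast$ back by $z$ to restore $z^2(z-1)$ makes the abstract theorem directly applicable, and only at the end does one cancel the overall $z^2$ (which is why the displayed equation has $z^2(1-z)(\Phi_n^\ast)''$). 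Then $\widehat A(z)U_n(z)[zA(z)]^{-1}=U_n(z)(z-1)/( z(z-1))\cdot z = U_n(z)$... this bookkeeping of which $\widehat A$ and which $zA$ enter is, I expect, the main obstacle: keeping the three different normalizations ($z^2(z-1)$, $z(z-1)$, $z-1$) straight so that all the formal cancellations of spurious $z$-factors are legitimate and the step functions $\Theta(|V_n|),\Theta(|W_n|)$ are $1$ (which holds here since $V_n,W_n\not\equiv 0$ as long as the Verblunsky coefficients are nonzero, the generic case). Once the normalization is fixed, proving $V_n W_n - U_n Y_n$ is divisible by $zA(z)$ is the same computation as before (same roots, same use of Lemma~\ref{lema-weight-newnew}), the quotient giving the quadratic $(t/2)(n-2)z^2+[n(b+1)+\gamma_n+\overline{\s}_{n,n}]z-\overline{\gamma}_n-(t/2)n$, and $-W_n'(z)=(t/2)\cdot 2\alpha_{n-1}z+[(1-\s_{n,n})\alpha_n+(t/2)(1-|\alpha_n|^2)\alpha_{n+1}]=t\alpha_{n-1}z+f_n$ supplies the right-hand side. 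I would close by remarking that the appearance of $\gamma_n$ on both the $z^1$ coefficient (as $\gamma_n$) and the $z^0$ coefficient (as $-\overline{\gamma}_n$) reflects the reality/conjugation symmetry between the $\Phi_n$ and $\Phi_n^\ast$ equations, and is the cleanest way to present the otherwise unwieldy constant terms.
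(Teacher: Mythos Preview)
Your overall strategy is exactly the paper's: apply Theorems~\ref{Thm3-second-order-1} and~\ref{Thm4-second-order-1}, and use Lemma~\ref{lema-weight-newnew} to factor $Y_nU_n-V_nW_n$. However, you misidentify $\widehat{A}(z)$. With $A(z)=z(z-1)=z^2-z$, so $a_3=0$, $a_2=1$, $a_1=-1$, $a_0=0$, the definition $\widehat{A}(z)=\overline{a}_0z^3+\overline{a}_1z^2+\overline{a}_2z+\overline{a}_3$ gives $\widehat{A}(z)=-z^2+z=z(1-z)$, \emph{not} $z^2(z-1)$. This is consistent with Theorem~\ref{EDO-ordem1-newnew}: there $-\widehat{A}(z)(\Phi_n^\ast)'=z(z-1)(\Phi_n^\ast)'$, with no factor of $z$ to cancel, so your ``reconcile the normalizations'' discussion is spurious.

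This error propagates in two places. First, in the $\Phi_n'$ coefficient you write $zA(z)Y_n(z)[\widehat{A}(z)]^{-1}=Y_n(z)$; in fact $zA(z)/\widehat{A}(z)=z^2(z-1)/[z(1-z)]=-z$, so the correct term is $-zY_n(z)$, which is what supplies the $(t/2)z^3$ you need (your version, a sum of three quadratics, cannot produce a cubic). Second, the required divisibility is $\widehat{A}(z)\mid (Y_nU_n-V_nW_n)$, i.e.\ by $z(z-1)$ only; your claim that the derivative of $Y_nU_n-V_nW_n$ vanishes at $z=0$ is false --- a direct computation gives that derivative equal to $-\overline{\gamma}_n-(t/2)n$, which is precisely the constant term of the quotient in the paper's factorization
\[
Y_n(z)U_n(z)-V_n(z)W_n(z)=-z(z-1)\{(t/2)nz^2+(nb+\gamma_n)z-\overline{\gamma}_n-(t/2)n\}.
\]
Your check at $z=1$ is essentially right (up to signs: one has $Y_n(1)=\overline{U_n(1)}$ and $W_n(1)=\overline{V_n(1)}$, so $Y_nU_n-V_nW_n$ at $z=1$ equals $|U_n(1)|^2-|V_n(1)|^2$, which vanishes by the argument of Lemma~\ref{lema-weight-newnew}). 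Once $\widehat{A}$ is corrected, the rest of your outline --- including the identifications $V_n'(z)=-t\overline{\alpha}_nz-\overline{f}_n$ and $-W_n'(z)=t\alpha_{n-1}z+f_n$ --- goes through and matches the paper's proof.
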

\begin{proof} Using Lemma \ref{lema-weight-newnew} and performing some manipulations, we obtain
\begin{equation*}
    Y_n(z)U_{n}(z)-V_{n}(z)W_{n}(z)=-z(z-1)\{(t/2)nz^2+(nb+\gamma_n)z-\overline{\gamma}_n-(t/2)n\}.
\end{equation*}
The results follow from Theorems \ref{Thm3-second-order-1} and \ref{Thm4-second-order-1}.
\end{proof}
The linear second order differential equation for $\Phi_n$ is given explicitly in the next theorem.

\begin{theorem}\label{Exemp2-edo-seg-ord} The MOPUC $\Phi_n$ with respect to the weight function \eqref{bessel-generalizado-2} satisfies
\begin{equation*}
z^2(z-1)V_{n}(z)\Phi_n''(z)+p_{4}(z)\Phi_n'(z)-p_{3}(z)\Phi_n(z)=0,
\end{equation*}
where
\begin{align*}
    p_4(z)=V_n(z)[(t/2)z^3&+(b+3-n-t/2)z^2+(\overline{b}+n-2-t/2)z+t/2] \\
    &+z^2(z-1)(t\overline{\alpha}_nz+\overline{f}_n),
\end{align*}
and
\begin{align*}
    p_{3}(z)=V_n(z)[(t/2)nz^2 &+[n(b+2)+\gamma_n-t\overline{\alpha}_n\alpha_{n-1}]z-(t/2)n+\s_{n,n}-\overline{\gamma}_n]\\
    &+(t\overline{\alpha}_nz+\overline{f}_n)U_n(z),
\end{align*}
with $b=\lambda+i\eta$, $V_n$ and $U_n$ as in Theorem \ref{EDO-ordem1-newnew}, and $\gamma_n$ and $f_n$ as in Theorem \ref{edo-2-order-bessel-generalized}.
\end{theorem}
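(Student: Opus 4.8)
The plan is to derive the linear second order ODE for $\Phi_n$ directly from Corollary \ref{coro-linear-second-order-linear-phin}, which already provides the fully linear equation
\[
zA(z)V_n(z)\Phi_n''(z)+\big\{V_n(z)[zA(z)]'-V_n'(z)zA(z)+[Y_n(z)zA(z)-U_n(z)\widehat{A}(z)]V_n(z)[\widehat{A}(z)]^{-1}\big\}\Phi_n'(z)
\]
\[
-\big\{V_n(z)U_n'(z)-V_n'(z)U_n(z)+[Y_n(z)U_n(z)-V_n(z)W_n(z)]V_n(z)[\widehat{A}(z)]^{-1}\big\}\Phi_n(z)=0,
\]
and then to specialize $A(z)=z(z-1)$ so that $zA(z)=z^2(z-1)$, and $\widehat{A}(z)=\overline{a}_0z^3+\overline{a}_1z^2+\overline{a}_2z+\overline{a}_3$ with $a_0=0$, $a_1=1$, $a_2=-1$, $a_3=0$ as in \eqref{A-grau2-B-grau3}; this gives $\widehat{A}(z)=z^2-z=z(z-1)$. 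The crucial simplification is therefore that $\widehat{A}(z)=zA(z)/z=z(z-1)$, i.e. the denominator $\widehat{A}(z)$ and the leading factor $zA(z)$ share the common factor $z(z-1)$, so after multiplying the whole equation of Corollary \ref{coro-linear-second-order-linear-phin} through by $\widehat{A}(z)[zA(z)]^{-1}=1$ (literally nothing to clear here since $zA(z)/\widehat{A}(z)=z$) the terms collapse to polynomial coefficients once we insert the explicit $U_n, V_n, W_n, Y_n$ from Theorem \ref{EDO-ordem1-newnew}.

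The key steps, in order, are as follows. First, I record from Theorem \ref{EDO-ordem1-newnew} (with the present weight) the explicit first order system $z^2(z-1)\Phi_n'=U_n\Phi_n+V_n\Phi_n^{\ast}$ and $z(z-1)(\Phi_n^{\ast})'=W_n\Phi_n+Y_n\Phi_n^{\ast}$, noting $zA(z)=z^2(z-1)$ and $\widehat{A}(z)=z(z-1)$. Second, I substitute into the coefficient of $\Phi_n'$ in Corollary \ref{coro-linear-second-order-linear-phin}: it reads $V_n[zA]'-V_n'zA+[Y_n\,zA-U_n\widehat{A}]V_n\widehat{A}^{-1}$. Using $zA=z\widehat{A}$ we get $[Y_n\,zA-U_n\widehat{A}]\widehat{A}^{-1}=zY_n-U_n$, so the bracket becomes $V_n[zA]'-V_n'zA+(zY_n-U_n)V_n$; comparing with $p_4(z)$ in the statement, one checks $[zA]'=[z^2(z-1)]'=3z^2-2z$ and $zY_n-U_n=-(t/2)z^3+\dots$, and collecting terms one must land exactly on $V_n(z)[(t/2)z^3+(b+3-n-t/2)z^2+(\overline b+n-2-t/2)z+t/2]+z^2(z-1)(t\overline\alpha_n z+\overline f_n)$. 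The only genuine input here beyond arithmetic is recognizing $-V_n'zA-U_nV_n+zA[zA]'\cdot(\text{stuff})$ regroups so that $-V_n'\,z^2(z-1)$ combines with $V_n[zA]'$ and the $U_n V_n$ term cancels against part of $zY_nV_n$; this is bookkeeping. Third, I treat the coefficient of $\Phi_n$: it is $-\{V_nU_n'-V_n'U_n+[Y_nU_n-V_nW_n]V_n\widehat{A}^{-1}\}$, and here Theorem \ref{edo-2-order-bessel-generalized} already supplies $Y_nU_n-V_nW_n=-z(z-1)\{(t/2)nz^2+(nb+\gamma_n)z-\overline\gamma_n-(t/2)n\}$, so $[Y_nU_n-V_nW_n]\widehat{A}^{-1}=[Y_nU_n-V_nW_n]/(z(z-1))=-\{(t/2)nz^2+(nb+\gamma_n)z-\overline\gamma_n-(t/2)n\}$; multiplying by $V_n$ and adding $V_nU_n'-V_n'U_n$ should reproduce $p_3(z)$ after using $zA\Phi_n'$ was eliminated, i.e. after rewriting $V_nU_n'-V_n'U_n$ in the form $V_n\cdot(\text{the }\Phi_n\text{ coefficient from Theorem \ref{edo-2-order-bessel-generalized}})+(t\overline\alpha_n z+\overline f_n)U_n$; this last rearrangement is exactly the content of the second order (non-linear) equation of Theorem \ref{edo-2-order-bessel-generalized} solved for $\Phi_n^{\ast}$ and substituted back.

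A cleaner route, which I would actually present, bypasses Corollary \ref{coro-linear-second-order-linear-phin} and instead starts from the already-proved first equation of Theorem \ref{edo-2-order-bessel-generalized},
\[
z^2(z-1)\Phi_n''+[(t/2)z^3+(b+3-n-t/2)z^2+(\overline b+n-2-t/2)z+t/2]\Phi_n'
\]
\[
-\{(t/2)nz^2+[n(b+2)+\gamma_n-t\overline\alpha_n\alpha_{n-1}]z-(t/2)n+\s_{n,n}-\overline\gamma_n\}\Phi_n=-[t\overline\alpha_n z+\overline f_n]\Phi_n^{\ast},
\]
multiply through by $V_n(z)$, and then eliminate $\Phi_n^{\ast}$ using $V_n(z)\Phi_n^{\ast}(z)=z^2(z-1)\Phi_n'(z)-U_n(z)\Phi_n(z)$ from the first equation of Theorem \ref{EDO-ordem1-newnew}. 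The right-hand side becomes $-[t\overline\alpha_n z+\overline f_n]\{z^2(z-1)\Phi_n'-U_n\Phi_n\}$, so moving it to the left produces exactly the coefficient $p_4(z)=V_n(z)[\text{the }\Phi_n'\text{ bracket}]+z^2(z-1)(t\overline\alpha_n z+\overline f_n)$ for $\Phi_n'$ and the coefficient $p_3(z)=V_n(z)[\text{the }\Phi_n\text{ bracket}]+(t\overline\alpha_n z+\overline f_n)U_n(z)$ for $\Phi_n$, matching the statement verbatim. The main obstacle is purely organizational — choosing the substitution that keeps the coefficients polynomial rather than rational — and once one notices that $\widehat{A}(z)=z(z-1)$ divides $Y_nU_n-V_nW_n$ (guaranteed by Theorem \ref{edo-2-order-bessel-generalized}) and that $V_n$ is used to clear the lone $\Phi_n^{\ast}$ term, the derivation is a one-line substitution with no residual denominators; I expect no analytic difficulty, only the need to state clearly that $V_n\not\equiv 0$ (which holds since $\alpha_{n-1}\neq 0$ here), so that dividing back by $V_n$ is never needed and the displayed equation is genuinely polynomial-coefficient.
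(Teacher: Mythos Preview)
Your proposal is correct, and the ``cleaner route'' you describe in the final paragraph is precisely the intended derivation: the paper states Theorem~\ref{Exemp2-edo-seg-ord} immediately after Theorem~\ref{edo-2-order-bessel-generalized} without a separate proof because the displayed form of $p_4$ and $p_3$ is exactly what one obtains by multiplying the first equation of Theorem~\ref{edo-2-order-bessel-generalized} through by $V_n(z)$ and substituting $V_n(z)\Phi_n^{\ast}(z)=z^2(z-1)\Phi_n'(z)-U_n(z)\Phi_n(z)$ from Theorem~\ref{EDO-ordem1-newnew}. One small remark: your aside that $V_n\not\equiv 0$ ``since $\alpha_{n-1}\neq 0$'' is not needed (as you yourself note, no division by $V_n$ occurs), and in fact $\alpha_{n-1}\neq 0$ is not guaranteed a priori for this weight; you can simply drop that sentence.
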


\begin{remark}If $t=0$ in Theorem \ref{Exemp2-edo-seg-ord} we obtain the differential equation previously found in Corollary \ref{Edo-medida-ranga}. This occurs because the weight function \eqref{medida-ranga} belongs to the classes $(1,1)$ and $(2,2)$. 
\end{remark}

\subsubsection{Special case: modified circular Jacobi} \label{Subsec-generalized-circular-jacobi}
When $\eta=0$ we have a  symmetric weight function
\begin{equation}\label{Modified-circular-Jacobi}
w(\theta)=e^{t\cos(\theta)}[\sin^2(\theta/2)]^\lambda.
\end{equation}
For $t=0$, we recover the weight function of the circular Jacobi polynomials \cite{periwal1990unitary,ISMAIL-JAT_2001}. When $\lambda=0$, we have the weight function of the modified Bessel polynomials \cite{ISMAIL-JAT_2001}.

The Verblunsky coefficients are real and from Theorem \ref{Theo-verb-new-new} they satisfy 
\begin{equation}\label{discrete-typepainleve-eq} 
    (\lambda+n+1)\alpha_{n}-(\lambda+n)\alpha_{n-1} =\frac{t}{2}[(1-\alpha_{n-1}^2)(\alpha_{n}+\alpha_{n-2})-(1-\alpha_n^2)(\alpha_{n+1}+\alpha_{n-1})].
\end{equation}

 If $t\neq0$, the same steps used to obtain Lemma \ref{lema-weight-newnew} show that the Verblunsky coefficients 
 are solutions of the discrete Painlevé II equation introduced in \eqref{tipo-painleve-absolute-value}. Observe that in this case the weight function is symmetric and $\tau_n=\Phi_n(1)/\Phi_n^{\ast}(1)=1$.

\begin{remark}
For $t=0$ in \eqref{discrete-typepainleve-eq} the Verblunsky coefficients of the circular Jacobi polynomials satisfy $(\lambda+n+1)\alpha_n=(\lambda+n)\alpha_{n-1}$. 
 When $\lambda=0$ in \eqref{tipo-painleve-absolute-value} we recover the well-known discrete Painlevé II equation associated with the Verblunsky coefficients of modified Bessel polynomials \cite{periwal1990unitary,vanassche2018orthogonal}. 
 \end{remark}

Notice that by taking $b=\lambda$ in Theorem \ref{EDO-ordem1-newnew} and using \eqref{discrete-typepainleve-eq}, we get that $\Phi_n$ and $\Phi_n^{\ast}$ satisfy the first order differential equations 
\begin{equation*}
     z^2(z-1)\Phi'_n(z) = U_{n}(z)\Phi_n(z)+V_{n}(z)\Phi_n^{\ast}(z),
\end{equation*}
where 
\begin{equation}\label{Un-modified-circular-jacob}
U_{n}(z)= [n-(t/2)\alpha_n\alpha_{n-1}]z^2+[(t/2)(1+\alpha_n\alpha_{n-1})-\lambda-n]z-t/2,
\end{equation}
and 
\begin{align} \nonumber
    V_{n}(z)=-(t/2)\alpha_nz^2 &-[(\lambda+n)\alpha_{n-1} 
     +(t/2)(1-\alpha_{n-1}^2)(\alpha_n+\alpha_{n-2}) \\ \label{Vn-modified-circular-jacob}
     & -(t/2)(\alpha_n+\alpha_{n-1})]z  -(t/2)\alpha_{n-1}.
\end{align}
Also,
\begin{equation*}
z(z-1)(\Phi_n^{\ast})'(z) =W_{n}(z)\Phi_n(z)+Y_{n}(z)\Phi_n^{\ast}(z),
\end{equation*}
 with 
$W_{n}(z)=z^2V_{n}(1/z)$ and $Y_{n}(z)=-\{(t/2)z^2+[\lambda-(t/2)(1+\alpha_n\alpha_{n-1})]z+(t/2)\alpha_n\alpha_{n-1}\}$.

\begin{remark}\label{remark-mod-bessel} For $\lambda=0$ and in view of \eqref{tipo-painleve-absolute-value}, it clear that $U_n$ and $V_n$ given in \eqref{Un-modified-circular-jacob} and \eqref{Vn-modified-circular-jacob} are divisible by $z-1$. The same holds for the polynomials $W_n$ and $Y_n$. These simplifications are also applied to the structure relation, because the weight function of modified Bessel polynomials belongs to the class $(1,2)$ and $(2,3)$. 
The simplification of the structure relation, as well as  first and second order differential equations for modified Bessel polynomials will be discussed in more detail in Example 4. 

\end{remark}

\subsection{Example 3}
Next we consider the semiclassical weight function 
\begin{equation}\label{medida-geral}
w(\theta)= e^{-\theta \, \eta}\, [\sin^2(\theta/2)]^{\lambda}[\cos^2(\theta/2)]^{\beta},  
\end{equation}
where $\eta \in \mathbb{R}$,  $\lambda >-1/2$ and $\beta > -1/2$. When $\beta=0$ we recovered the weight function from Example 1, and we note that in this case the differential equation of Theorem \ref{Thm3-second-order-1} is simpler because $V'_n(z)=0$. This, of course, is linked to the fact that this weight function belongs to the class $(1,1)$, which is not the case for $\beta\neq0$. Also, when $\eta=0$ we obtain the weight function of the Jacobi polynomials on the unit circle \cite{badkov1987systems,Magnus2013}.

Let $d=\lambda+\beta+i\eta$. The weight function \eqref{medida-geral} satisfy \eqref{Eq-Tipo-Pearson-1} with
\begin{equation}\label{A-B-grau3}
\begin{aligned}
& A(z) =(z-r)(z^2-1), \\
& B(z)= i\{(d+3)z^3+[2(\lambda-\beta-r)-rd]z^2+[\overline{d}-1-2r(\lambda-\beta)]z-r\overline{d}\},
\end{aligned}
\end{equation}
where  $r\in\mathbb{C}$. This means that the weight function belongs to the class $(3,3)$, and the following result follows from Theorem  \ref{Thm1-main}.

\begin{theorem} Let $d=\lambda+\beta+i\eta$ and $r\in\mathbb{C}$. Then $\Phi_n$ and $\Phi_n^{\ast}$ satisfy, for  $n\geqslant 3$,
\begin{align*}
    (z-r)(z^2-1)&\Phi_{n}'(z)    =  n\Phi_{n+2}(z)-[n(r+\overline{\alpha}_{n+1}\alpha_n)+(d+n)\overline{\alpha}_n\alpha_{n-1}+\ell_{n+1,n}]\Phi_{n+1}(z) \\
 & -\{(\overline{d}+n)(1+r\alpha_n\overline{\alpha}_{n-1})+r[\overline{\ell}_{n+1,n}-2(\lambda-\beta)]\}\Phi_{n}(z) \\
&  +r(\overline{d}+n)(1-|\alpha_{n-1}|^2)\Phi_{n-1}(z) -(d+1)\overline{\alpha}_nz\Phi_{n}^{\ast}(z) \\
&-\{(d+2)\overline{\alpha}_{n+1}(1-|\alpha_n|^2)+[2(\lambda-\beta)-\ell_{n+1,n}-r(d+1)]\overline{\alpha}_n\}\Phi_{n}^{\ast}(z).
\end{align*}
\end{theorem}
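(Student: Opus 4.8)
The plan is to specialize the general structure relation of Theorem \ref{Thm1-main} to the weight function \eqref{medida-geral}. First I would verify that \eqref{medida-geral} indeed satisfies the Pearson-type equation \eqref{Eq-Tipo-Pearson-1} with the polynomials $A(z)$ and $B(z)$ given in \eqref{A-B-grau3}: this amounts to differentiating $w(\theta)$ logarithmically, writing $w'(\theta)/w(\theta)$ in terms of $\cot(\theta/2)$, $\tan(\theta/2)$ and the constant $-\eta$, clearing denominators by multiplying by $A(e^{i\theta})$, and matching the result against $B(e^{i\theta})-A'(e^{i\theta})$ — here $A(z)=(z-r)(z^2-1)$ is chosen precisely so that $A(e^{i\theta})$ cancels the singularities of $1/w$ at $\theta=0$ and $\theta=2\pi$ (hence $A(1)[w(2\pi)-w(0)]=0$ holds automatically and the hypothesis of Theorem \ref{Thm1-main} is met), with the extra factor $(z-r)$ being the freedom to raise the class by one as noted after \eqref{Eq-Tipo-Pearson-1}. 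Reading off the coefficients gives $a_3=1$, $a_2=-r$, $a_1=-1$, $a_0=r$, and $b_3=t/2$-type expressions as displayed in \eqref{A-B-grau3}, e.g. $ib_3+2a_3$, $ib_2+a_2$, etc.

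Next I would substitute these explicit values of $a_j,b_j$ into the five coefficients $\s_{n,n+1},\s_{n,n},\s_{n,n-1},\p_{n,n},\R_{n,n}$ from Theorem \ref{Thm1-main}. The coefficient $\p_{n,n}=(ib_3+2a_3)\overline{\alpha}_n$ becomes $(d+1)\overline{\alpha}_n$ (up to sign conventions coming from $i b_3 = i\cdot i(d+3)/\ldots$, which I would track carefully), explaining the $-(d+1)\overline{\alpha}_n z\Phi_n^\ast(z)$ term. For $\R_{n,n}$ I would plug $ib_3+a_3$, $ib_2+a_2$, and $a_3\ell_{n+1,n}=\ell_{n+1,n}$ into the formula $\R_{n,n}=(ib_3+a_3)\overline{\alpha}_{n+1}(1-|\alpha_n|^2)+(ib_2+a_2+a_3\ell_{n+1,n})\overline{\alpha}_n$; combining the $z^1$ and $z^0$ terms of the reversed-polynomial contribution $[\p_{n,n}z+\R_{n,n}]\Phi_n^\ast(z)$ then yields the last displayed line. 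The leading coefficient is $na_3=n$, giving $n\Phi_{n+2}(z)$. For the $\Phi_{n+1}$ term I would use $\s_{n,n+1}=na_2+a_3[(n-1)\ell_{n,n-1}-n\ell_{n+2,n+1}]+(ib_3+2a_3)\overline{\alpha}_n\alpha_{n-1}$ together with the identity $\ell_{n,n-1}-\ell_{n+2,n+1}=-(\overline{\alpha}_n\alpha_{n-1}+\overline{\alpha}_{n+1}\alpha_n)$ (used in the proof of Theorem \ref{Coro-2}) to collapse the $\ell$-differences into Verblunsky coefficients, reproducing the bracket $-[n(r+\overline{\alpha}_{n+1}\alpha_n)+(d+n)\overline{\alpha}_n\alpha_{n-1}+\ell_{n+1,n}]$. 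Similarly $\s_{n,n-1}=(ib_0+na_0)(1-|\alpha_{n-1}|^2)$ becomes $r(\overline{d}+n)(1-|\alpha_{n-1}|^2)$, and $\s_{n,n}=ib_1+(n+1)a_1-a_0\overline{\ell}_{n,n-1}-[ib_0+(n+1)a_0]\alpha_n\overline{\alpha}_{n-1}$ is massaged — using $b_1$ from \eqref{A-B-grau3} and $a_0=r$, $a_1=-1$ — into $-\{(\overline{d}+n)(1+r\alpha_n\overline{\alpha}_{n-1})+r[\overline{\ell}_{n+1,n}-2(\lambda-\beta)]\}$, where I would note $\overline{\ell}_{n,n-1}$ and $\overline{\ell}_{n+1,n}$ differ by $\overline{\alpha}_n\alpha_{n-1}$-type terms absorbed elsewhere.

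The main obstacle I anticipate is bookkeeping: keeping the factors of $i$ straight throughout (since $B$ itself carries an overall $i$ and the $b_j$ are themselves imaginary multiples of real-and-$d$-dependent quantities, the combinations $ib_j+ca_j$ must be simplified without sign errors), and correctly reconciling the slightly different index subscripts on the $\ell_{n,k}$ coefficients that appear — Theorem \ref{Thm1-main} mixes $\ell_{n,n-1}$, $\ell_{n+1,n}$ and $\ell_{n+2,n+1}$, and collapsing them into the single $\ell_{n+1,n}$ appearing in the statement requires repeated use of the recurrences \eqref{prop-coef-n-1} and the identity relating consecutive $\ell_{i,i-1}$ to Verblunsky coefficients. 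There is no conceptual difficulty beyond Theorem \ref{Thm1-main}; it is purely a matter of careful substitution and algebraic simplification, and I would organize the computation coefficient-by-coefficient ($\Phi_{n+2}$, then $\Phi_{n+1}$, then $\Phi_n$, then $\Phi_{n-1}$, then the $\Phi_n^\ast$ block) to make the verification transparent.
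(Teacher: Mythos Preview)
Your proposal is correct and matches the paper's approach exactly: the paper simply states that the theorem ``follows from Theorem~\ref{Thm1-main}'' once one has identified $A(z)$ and $B(z)$ as in \eqref{A-B-grau3}, and your plan is precisely to carry out that substitution coefficient-by-coefficient, using the recurrence $\ell_{n+1,n}=\ell_{n,n-1}+\overline{\alpha}_n\alpha_{n-1}$ from \eqref{prop-coef-n-1} to rewrite $\overline{\ell}_{n,n-1}$ as $\overline{\ell}_{n+1,n}-\alpha_n\overline{\alpha}_{n-1}$ in $\s_{n,n}$ and the analogous collapse of $\ell$-differences in $\s_{n,n+1}$. The only labor is the bookkeeping you already anticipate, and your sign checks (e.g.\ $ib_3+2a_3=-(d+1)$, $ib_0+na_0=r(\overline{d}+n)$) are on track.
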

 From Theorem \ref{Coro-2} we have the following discrete relation for the Verblunsky coefficients.

\begin{theorem} The Verblunsky coefficients $\alpha_n$  satisfy the difference equation
    \begin{align*} 
\overline{r}(d+n-1)&(1-|\alpha_{n-1}|^2)\alpha_{n-2} +(\overline{d}+n+2)(1-|\alpha_n|^2)\alpha_{n+1} \\
& = \{\overline{r}(n-1)-2(\lambda-\beta-\overline{r})+\overline{r}\overline{d}+(\overline{d}+n)\alpha_{n}\overline{\alpha}_{n-1}+2\overline{\ell}_{n+1,n}\}\alpha_{n} \\
& \ \ \ \ \ \ \ \{n+d-2\overline{r}(\lambda-\beta)+\overline{r}[d+n+1]\overline{\alpha}_n\alpha_{n-1}+2\overline{r}\ell_{n,n-1}\}\alpha_{n-1}. 
\end{align*}
In particular, if $r=0$, 
\begin{align*}
(\overline{d}+n+2)(1-|\alpha_n|^2)\alpha_{n+1}-(d+n)\alpha_{n-1}=\{2(\beta-\lambda)+(\overline{d}+n)\alpha_n\overline{\alpha}_{n-1}+2\overline{\ell}_{n+1,n}\}\alpha_n . 
\end{align*}

\end{theorem}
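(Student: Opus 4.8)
The plan is to specialize the general difference equation of Theorem \ref{Coro-2} to the pair $A(z)$, $B(z)$ exhibited in \eqref{A-B-grau3}. First I would verify the standing hypothesis of Theorem \ref{Thm1-main}: since $A(z)=(z-r)(z^2-1)$ we have $A(1)=0$, so $A(1)[w(2\pi)-w(0)]=0$ holds automatically for every admissible $r\in\C$ and $\lambda,\beta>-1/2$, $\eta\in\mathbb{R}$; hence Theorem \ref{Coro-2} applies (for $n\geqslant 3$).

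Next I would read off the coefficients. Expanding $A(z)=z^3-rz^2-z+r$ gives $(a_3,a_2,a_1,a_0)=(1,-r,-1,r)$, and from \eqref{A-B-grau3}, $(b_3,b_2,b_1,b_0)=\big(i(d+3),\, i[2(\lambda-\beta-r)-rd],\, i[\overline{d}-1-2r(\lambda-\beta)],\, -ir\overline{d}\big)$ with $d=\lambda+\beta+i\eta$. Using that $\lambda,\beta$ are real, the quantities entering Theorem \ref{Coro-2} are
\begin{align*}
&\overline{a}_3=1,\quad \overline{a}_2=-\overline{r},\quad \overline{a}_1=-1,\quad \overline{a}_0=\overline{r},\\
&i\overline{b}_3=\overline{d}+3,\quad i\overline{b}_2=2(\lambda-\beta-\overline{r})-\overline{r}\,\overline{d},\quad i\overline{b}_1=d-1-2\overline{r}(\lambda-\beta),\quad i\overline{b}_0=-\overline{r}d.
\end{align*}
Substituting these into the left-hand side of the equation in Theorem \ref{Coro-2} gives $[(n-1)\overline{a}_0-i\overline{b}_0]=\overline{r}(d+n-1)$ and $[(n-1)\overline{a}_3+i\overline{b}_3]=\overline{d}+n+2$, which is exactly the left-hand side claimed here.

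For the right-hand side, carrying out the substitution and accounting for the overall minus sign in front of each brace in Theorem \ref{Coro-2}, the bracket multiplying $\alpha_n$ acquires the contributions $[i\overline{b}_3+(n-1)\overline{a}_3]\alpha_n\overline{\alpha}_{n-1}=(\overline{d}+n+2)\alpha_n\overline{\alpha}_{n-1}$ and $2\overline{a}_3\overline{\ell}_{n,n-1}=2\overline{\ell}_{n,n-1}$. The single simplification needed is $\ell_{n+1,n}=\ell_{n,n-1}+\overline{\alpha}_n\alpha_{n-1}$ from \eqref{prop-coef-n-1}, i.e.\ $\overline{\ell}_{n,n-1}=\overline{\ell}_{n+1,n}-\alpha_n\overline{\alpha}_{n-1}$; inserting it turns $(\overline{d}+n+2)\alpha_n\overline{\alpha}_{n-1}+2\overline{\ell}_{n,n-1}$ into $(\overline{d}+n)\alpha_n\overline{\alpha}_{n-1}+2\overline{\ell}_{n+1,n}$, and together with the constant part $\overline{r}(n-1)-2(\lambda-\beta-\overline{r})+\overline{r}\,\overline{d}$ this gives the claimed $\alpha_n$-bracket. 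The bracket multiplying $\alpha_{n-1}$ comes out directly, without any further identity, as $n+d-2\overline{r}(\lambda-\beta)+\overline{r}(d+n+1)\overline{\alpha}_n\alpha_{n-1}+2\overline{r}\ell_{n,n-1}$, and rearranging yields the full difference equation. Finally, the case $r=0$ is immediate: then $a_0=b_0=0$, hence $\overline{a}_0=i\overline{b}_0=0$, so the $\alpha_{n-2}$ term on the left and the $\ell_{n,n-1}$ term on the right disappear, while $i\overline{b}_1=d-1$, leaving exactly the displayed reduced relation.

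The argument is entirely mechanical once Theorem \ref{Coro-2} is available; the only point requiring care is the conjugation bookkeeping for the coefficients of $B(z)$ (keeping $\lambda,\beta$ real while $d$ and $r$ are complex) and the one invocation of \eqref{prop-coef-n-1} to trade $\overline{\ell}_{n,n-1}$ for $\overline{\ell}_{n+1,n}$. I would not expect any genuine obstacle beyond this.
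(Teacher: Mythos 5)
Your proposal is correct and follows exactly the route the paper intends: the paper offers no written proof beyond ``From Theorem \ref{Coro-2} we have\ldots'', and your substitution of $(a_3,a_2,a_1,a_0)=(1,-r,-1,r)$ and the conjugated $B$-coefficients, together with the single identity $\overline{\ell}_{n,n-1}=\overline{\ell}_{n+1,n}-\alpha_n\overline{\alpha}_{n-1}$ from \eqref{prop-coef-n-1}, reproduces both displayed equations (note only that the statement as printed is missing a ``$+$'' between the two braces on the right-hand side, which your computation confirms should be there).
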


Although the weight function \eqref{medida-geral} belongs to the class $(3,3)$, it also belongs to the class $(2,2)$, as \eqref{Eq-Tipo-Pearson-1} is satisfied with
\begin{align}\label{A-B-generalizedJac}
A(z) =  z^2 -1 \ \mbox{and} \ B(z) = i[(d +2) \, z^2 + 2(\lambda - \beta)z +\overline{d}]. 
\end{align}
In order to simplify the formulas used to obtain the differential equations, in this section we fix  $A(z)$ and $B(z)$ as in \eqref{A-B-generalizedJac}. Then from Theorem \ref{Thm1-main} and Corollary \ref{Coro1-formula} we get 
\begin{align*}
    & \s_{n,n-1}=-(\overline{d}+n)(1-|\alpha_{n-1}|^2), \ \s_{n,n}=2(\beta-\lambda)+\overline{\ell}_{n+1,n}+(\overline{d}+n)\alpha_{n}\overline{\alpha}_{n-1},  \\
    & \s_{n,n+1}=n,  \ \R_{n,n}=-(d+1)\overline{\alpha}_{n}, \ \p_{n,n}=0, \ \s_{n,n}=-\ell_{n+1,n}-(d+n)\overline{\alpha}_{n}\alpha_{n-1}. 
\end{align*}

\begin{theorem}\label{Firs-OrderDer-Generalweight} Let $\Phi_n$ be the MOPUC with respect to the weight function \eqref{medida-geral}. Then
    \begin{align*}
z(z^2-1)\Phi_n'(z)= &\{nz^2-[\ell_{n+1,n}+(d+n)\overline{\alpha}_n\alpha_{n-1}]z -(\overline{d}+n)\}\Phi_n(z) \\
&-[(d+n+1)\overline{\alpha}_nz+(\overline{d}+n)\overline{\alpha}_{n-1}]\Phi_n^{\ast}(z),
     \end{align*}
and
    \begin{align*}
     (1-z^2)(\Phi_n^{\ast})'(z)= & [(d+n)\alpha_{n-1}z+(\overline{d}+n+1)\alpha_n ]\Phi_n(z) \\
     & +\{dz+\overline{\ell}_{n+1,n}+(\overline{d}+n)\alpha_n\overline{\alpha}_{n-1}\}\Phi_n^{\ast}(z). 
 \end{align*}  
 \end{theorem}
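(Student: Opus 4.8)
The plan is to specialize the general first order differential equations of Theorem \ref{Thm-main-firstorder-edo} to the choice of $A(z)$ and $B(z)$ given in \eqref{A-B-generalizedJac}, where $A(z)=z^2-1$ has degree $2$ rather than $3$. Concretely, I would read off the coefficients $a_0=-1$, $a_1=0$, $a_2=1$, $a_3=0$ and $b_0=i\overline d$, $b_1=2i(\lambda-\beta)$, $b_2=i(d+2)$, $b_3=0$ from \eqref{defin-A-B-grau3}, and plug these together with the already-computed values $\s_{n,n}$, $\s_{n,n+1}=n$, $\R_{n,n}=-(d+1)\overline\alpha_n$, $\p_{n,n}=0$ listed just before the statement into the explicit formulas for $\widehat A(z)$, $U_n(z)$, $V_n(z)$, $W_n(z)$, $Y_n(z)$ in Theorem \ref{Thm-main-firstorder-edo}. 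With $a_3=b_3=0$ the polynomial $\widehat A(z)=\overline a_0 z^3+\overline a_1 z^2+\overline a_2 z+\overline a_3$ becomes $-z^3+z=-z(z^2-1)$, and likewise $U_n$, $V_n$, $W_n$, $Y_n$ collapse to lower degree; in particular a common factor of $z$ should appear that cancels against the leading $z$ on the left side of \eqref{first-derivative-1}, turning $zA(z)\Phi_n'=z^2(z^2-1)\Phi_n'$ into the stated $z(z^2-1)\Phi_n'$.

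The key steps, in order, are: (i) substitute the coefficient values and simplify $U_n(z)=na_3z^3+(\s_{n,n+1}+na_3\overline\alpha_{n+1}\alpha_n)z^2+\s_{n,n}z+ib_0+na_0$ to $U_n(z)=nz^2+\s_{n,n}z-(\overline d+n)$, then use $\s_{n,n}=-\ell_{n+1,n}-(d+n)\overline\alpha_n\alpha_{n-1}$ (the second expression given for $\s_{n,n}$) to match the claimed coefficient of $z$; (ii) simplify $V_n(z)=[ib_3-(n-2)a_3]\overline\alpha_nz^2-(\s_{n,n+1}\overline\alpha_n+na_3\overline\alpha_{n+1}-\R_{n,n})z+(ib_0+na_0)\overline\alpha_{n-1}$, noting the $z^2$ term vanishes and $\R_{n,n}=-(d+1)\overline\alpha_n$ gives the linear coefficient $-(n\overline\alpha_n+(d+1)\overline\alpha_n)=-(d+n+1)\overline\alpha_n$ while the constant term is $-(\overline d+n)\overline\alpha_{n-1}$; (iii) divide the resulting identity $z^2(z^2-1)\Phi_n'=z[U_n/z\cdot\text{(poly)}]\dots$ — more precisely observe that $U_n(z)\Phi_n+V_n(z)\Phi_n^\ast$ as written is divisible by $z$ only after combining, so instead factor $z$ out of $zA(z)=z(z^2-1)\cdot z$ and divide both sides by $z$ once the right-hand side is seen to carry the matching factor; (iv) for the reversed-polynomial equation, simplify $\widehat A(z)=-z(z^2-1)$, and reduce $W_n(z)$, $Y_n(z)$ from Theorem \ref{Thm-main-firstorder-edo} using $\overline{\R}_{n,n}=-(\overline d+1)\alpha_n$ and $\overline\s_{n,n}$; then \eqref{first-derivative-2} reads $-\widehat A(z)(\Phi_n^\ast)'=z(z^2-1)(\Phi_n^\ast)'=W_n(z)\Phi_n+Y_n(z)\Phi_n^\ast$, and after extracting the common factor $z$ (present because $a_3=b_3=0$ kills the constant term of $W_n$ up to the $\overline\p_{n,n}-n\overline a_3\alpha_n=0$ contribution, and similarly for $Y_n$) one divides by $z$ to reach the stated $(1-z^2)(\Phi_n^\ast)'=\dots$, up to an overall sign bookkeeping.

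The main obstacle I anticipate is purely bookkeeping: tracking the two different expressions for $\s_{n,n}$ and making sure the correct one is substituted so that the coefficient of $z$ in $U_n$ and the structure of $V_n$, $W_n$, $Y_n$ come out in the displayed form, together with getting all the signs right when passing from $A(z)=z^2-1$ to $\widehat A(z)=-z(z^2-1)$ and when dividing through by the common factor of $z$. There is also a small subtlety in that $n\geqslant 3$ is needed for Theorem \ref{Thm1-main} (hence for the coefficient formulas) to apply, so strictly the conclusion is for $n\geqslant 3$, with low-degree cases either checked directly or absorbed since $\Phi_n^\ast$ and $\Phi_n$ are explicit there. No genuinely new idea is required beyond the specialization; I would present the proof simply as ``substitute \eqref{A-B-generalizedJac} into Theorem \ref{Thm-main-firstorder-edo}, use the listed values of $\s_{n,k}$, $\R_{n,n}$, $\p_{n,n}$, and cancel the common factor $z$,'' mirroring the style of the proof of Theorem \ref{Thm-aplic-medidaranga-edo1}.
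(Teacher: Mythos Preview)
Your approach is exactly the paper's: substitute the coefficients of $A(z)=z^2-1$ and $B(z)$ from \eqref{A-B-generalizedJac} into Theorem \ref{Thm-main-firstorder-edo}, use the listed values of $\s_{n,n}$, $\s_{n,n+1}$, $\R_{n,n}$, $\p_{n,n}$, and simplify. One bookkeeping slip to fix: since $A(z)=z^2-1$ (not $z(z^2-1)$), the left side of \eqref{first-derivative-1} is already $zA(z)\Phi_n'=z(z^2-1)\Phi_n'$, so no factor of $z$ needs to be cancelled for the first equation; the division by $z$ is only required for the second equation, where $\widehat A(z)=z(1-z^2)$ and both $W_n(z)$ and $Y_n(z)$ carry a factor of $z$ because $\overline\p_{n,n}-n\overline a_3\alpha_n=0$ and $\overline\s_{n,n+1}+n\overline a_3\alpha_{n+1}\overline\alpha_n-n\overline a_2=0$.
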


\begin{proof} If $A(z)$ and  $B(z)$ are as in \eqref{A-B-generalizedJac}, from Theorem \ref{Thm-main-firstorder-edo} we get  $\widehat{A}(z)=z(1-z^2)$, $U_n(z)=nz^2-[\ell_{n+1,n}+(d+n)\overline{\alpha}_n\alpha_{n-1}]z-(\overline{d}+n)$, $V_n(z)=-(d+n+1)\overline{\alpha}_n z-(\overline{d}+n)\overline{\alpha}_{n-1}$, $W_n(z)=-z[(d+n)\alpha_{n-1}z+(\overline{d}+n+1)\alpha_n]$ and $Y_n(z)=-z[dz+\overline{\ell}_{n+1,n}+(\overline{d}+n)\alpha_n\overline{\alpha}_{n-1}]$. 
\end{proof}

We need the next technical result to simplify the coefficients appearing in Theorem \ref{Thm3-second-order-1}.

\begin{lemma}\label{value-gamma-geneweight} The coefficient  $\ell_{n,n-1}$ satisfies
\begin{align*}
    (\overline{d}+n)\overline{\ell}_{n,n-1}+(d+n){\ell}_{n,n-1}+2n(\beta-\lambda)=0, \ \ n\geq 1. 
\end{align*}
\end{lemma}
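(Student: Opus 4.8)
The plan is to extract the desired identity by taking the coefficient of a suitable power of $z$ in the first order differential equation of Theorem \ref{Firs-OrderDer-Generalweight} (with $A,B$ as in \eqref{A-B-generalizedJac}), or, more directly, by pairing $A(z)\Phi_n'$ against an appropriate monomial. Since the weight function \eqref{medida-geral} is of the form $w(\theta)=e^{-\eta\theta}[\sin^2(\theta/2)]^\lambda[\cos^2(\theta/2)]^\beta$ and $A(e^{i\theta})=e^{2i\theta}-1$ vanishes at $\theta=0$ and $\theta=2\pi$, the boundary term $A(1)[w(2\pi)-w(0)]$ vanishes, so Theorem \ref{Thm1-main} and its corollaries apply. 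First I would write down the structure relation (or equivalently look at the coefficients $\s_{n,n}$, both expressions of which are recorded right before the statement of Theorem \ref{Firs-OrderDer-Generalweight}): one has $\s_{n,n}=2(\beta-\lambda)+\overline{\ell}_{n+1,n}+(\overline{d}+n)\alpha_n\overline{\alpha}_{n-1}$ from Theorem \ref{Thm1-main} and also $\s_{n,n}=-\ell_{n+1,n}-(d+n)\overline{\alpha}_n\alpha_{n-1}$ from Corollary \ref{Coro1-formula}.

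Equating these two expressions for $\s_{n,n}$ gives
\begin{equation*}
2(\beta-\lambda)+\overline{\ell}_{n+1,n}+(\overline{d}+n)\alpha_n\overline{\alpha}_{n-1}=-\ell_{n+1,n}-(d+n)\overline{\alpha}_n\alpha_{n-1},
\end{equation*}
that is,
\begin{equation*}
(\overline{d}+n)\alpha_n\overline{\alpha}_{n-1}+(d+n)\overline{\alpha}_n\alpha_{n-1}+\overline{\ell}_{n+1,n}+\ell_{n+1,n}+2(\beta-\lambda)=0.
\end{equation*}
Now I would use the recurrence $\ell_{n+1,n}=\ell_{n,n-1}+\overline{\alpha}_n\alpha_{n-1}$ from \eqref{prop-coef-n-1} (and its conjugate $\overline{\ell}_{n+1,n}=\overline{\ell}_{n,n-1}+\alpha_n\overline{\alpha}_{n-1}$) to replace $\ell_{n+1,n},\overline{\ell}_{n+1,n}$ by $\ell_{n,n-1},\overline{\ell}_{n,n-1}$; after substitution the terms combine as
\begin{equation*}
(\overline{d}+n+1)\alpha_n\overline{\alpha}_{n-1}+(d+n+1)\overline{\alpha}_n\alpha_{n-1}+\overline{\ell}_{n,n-1}+\ell_{n,n-1}+2(\beta-\lambda)=0.
\end{equation*}
The remaining task is to recognize that $\overline{\ell}_{n,n-1}+\ell_{n,n-1}=2\Re(\ell_{n,n-1})$ together with the mixed $\alpha$-terms can be rearranged into $(\overline{d}+n)\overline{\ell}_{n,n-1}+(d+n)\ell_{n,n-1}+2n(\beta-\lambda)$ by an induction on $n$: the base case $n=1$ uses $\ell_{1,0}=-\overline{\alpha}_0$ and $d+\overline{d}=2(\lambda+\beta)$, and the inductive step subtracts the identity for index $n$ from that for index $n+1$, the difference telescoping precisely because of the recurrence $\ell_{n+1,n}-\ell_{n,n-1}=\overline{\alpha}_n\alpha_{n-1}$. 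Alternatively, and perhaps more cleanly, one can bypass the two-form comparison altogether: pair $A(z)\Phi_n'$ with $z^{n-1}$ using \eqref{Eq-A-phi-1} with $k=n-1$, expand both sides in terms of the $\ell$'s and $\alpha$'s, and read off the real-linear relation directly.

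The main obstacle I anticipate is purely bookkeeping: several of the relevant inner products, such as $\langle z^2\Phi_n,1\rangle$, $\langle z\Phi_n,1\rangle$, and $\langle\Phi_n^\ast,\Phi_n\rangle$, must be expanded consistently in the $\ell_{n,k}$ and $\alpha_k$ variables via \eqref{prop-coef-n-1} and the Szeg\H{o} recurrences \eqref{Szego-recurrence}, and keeping track of which index shifts ($n$ versus $n+1$ versus $n-1$) appear where is error-prone. Once the substitutions from \eqref{prop-coef-n-1} are made carefully, the identity collapses to the stated closed form, and the $n\geq 1$ range is exactly the range on which the recurrence $\ell_{n,n-1}=\ell_{n-1,n-2}+\overline{\alpha}_{n-1}\alpha_{n-2}$ (with $\ell_{0,-1}=0$) is valid, so no separate low-index argument beyond $n=1$ is needed.
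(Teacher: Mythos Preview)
Your route has a real gap at the base case. From the two expressions for $\s_{n,n}$ you correctly obtain
\[
(\overline{d}+n)\alpha_n\overline{\alpha}_{n-1}+(d+n)\overline{\alpha}_n\alpha_{n-1}+\overline{\ell}_{n+1,n}+\ell_{n+1,n}+2(\beta-\lambda)=0,
\]
and if we set $P(n):=(\overline{d}+n)\overline{\ell}_{n,n-1}+(d+n)\ell_{n,n-1}+2n(\beta-\lambda)$, then indeed (using $\ell_{n+1,n}-\ell_{n,n-1}=\overline{\alpha}_n\alpha_{n-1}$) this relation is exactly $P(n+1)-P(n)=0$. So the $\s_{n,n}$ comparison alone tells you only that $P(n)$ is \emph{constant}; it does not tell you the constant is zero. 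Your proposed base case $P(1)=0$ reads
\[
(\overline{d}+1)\alpha_0+(d+1)\overline{\alpha}_0=2(\beta-\lambda),
\]
which is a genuine statement about the first Verblunsky coefficient of the weight \eqref{medida-geral}. The two facts you cite, $\ell_{1,0}=-\overline{\alpha}_0$ and $d+\overline{d}=2(\lambda+\beta)$, do not imply it; you would need to compute $\alpha_0=\overline{\mu}_1/\mu_0$ for this weight, and you have not done so. The alternative you sketch (pairing $A\Phi_n'$ with $z^{n-1}$) runs into the same problem: it produces relations among moments and $\ell$'s that still leave one undetermined constant.

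The paper supplies exactly the missing piece by extracting a \emph{second} relation, independent of the $\s_{n,n}$ comparison. It evaluates the first-order equation of Theorem~\ref{Firs-OrderDer-Generalweight} at $z=1$ and $z=-1$; since $z(z^2-1)$ vanishes there and $|\Phi_n(\pm 1)/\Phi_n^\ast(\pm 1)|=1$, one gets $|U_n(\pm 1)|=|V_n(\pm 1)|$, i.e.\ two quadratic identities. Subtracting them yields
\[
d\,\ell_{n+1,n}+\overline{d}\,\overline{\ell}_{n+1,n}-(n+1)\big[(\overline{d}+n)\alpha_n\overline{\alpha}_{n-1}+(d+n)\overline{\alpha}_n\alpha_{n-1}\big]=0.
\]
Substituting your $\s_{n,n}$ relation into this gives $P(n+1)=0$ directly, for each $n$, with no induction and no need to know $\alpha_0$. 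That evaluation at the zeros of $zA(z)$ is the idea your argument is missing.
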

\begin{proof} By making  $z=1$ and $z=-1$ in the first differential equation of Theorem \ref{Firs-OrderDer-Generalweight} and using that $|\Phi_n(1)/\Phi_n^{\ast}(1)|=1$, we obtain
\begin{align}\label{formula1-GenerWeight}
&|(\overline{d}+n+1)\alpha_n+(d+n)\alpha_{n-1}|^2=|\ell_{n+1,n}+(d+n)\overline{\alpha}_n\alpha_{n-1}+\overline{d}|^2,  \\ \label{formula2-GenerWeight}
&|(\overline{d}+n+1)\alpha_n-(d+n)\alpha_{n-1}|^2=|\ell_{n+1,n}+(d+n)\overline{\alpha}_n\alpha_{n-1}-\overline{d}|^2 .   
\end{align}
    The identity \eqref{formula1-GenerWeight} can be written as 
\begin{align*}
|d+n+1|^2|\alpha_n|^2 & +|d+n|^2|\alpha_{n-1}|^2-|\ell_{n+1,n}+(d+n)\overline{\alpha}_n\alpha_{n-1}|^2-|d|^2 \\
& =d\ell_{n+1,n}+\overline{d}\,\overline{\ell}_{n+1,n}-(n+1)[(\overline{d}+n)\alpha_n\overline{\alpha}_{n-1}+(d+n)\overline{\alpha}_n\alpha_{n-1}]. 
\end{align*}
Similarly, we have from \eqref{formula2-GenerWeight} that

\begin{align*}
|d+n+1|^2|\alpha_n|^2 & +|d+n|^2|\alpha_{n-1}|^2-|\ell_{n+1,n}+(d+n)\overline{\alpha}_n\alpha_{n-1}|^2-|d|^2 \\
& =-d\ell_{n+1,n}-\overline{d}\,\overline{\ell}_{n+1,n}+(n+1)[(\overline{d}+n)\alpha_n\overline{\alpha}_{n-1}+(d+n)\overline{\alpha}_n\alpha_{n-1}].
\end{align*}
Hence, we see that 
\begin{equation}\label{formula1-need}
d\ell_{n+1,n}+\overline{d}\,\overline{\ell}_{n+1,n}-(n+1)[(\overline{d}+n)\alpha_n\overline{\alpha}_{n-1}+(d+n)\overline{\alpha}_n\alpha_{n-1}]=0. 
\end{equation}
Therefore, the required equation is a consequence of the two formulas for $\s_{n,n}$, that lead to 
\begin{equation*}
    (\overline{d}+n)\alpha_n\overline{\alpha}_{n-1}+(d+n)\overline{\alpha}_n\alpha_{n-1}=-2(\beta-\lambda)-\overline{\ell}_{n+1,n}-\ell_{n+1,n}. 
\end{equation*}
\end{proof}
\begin{corollary}\label{Thm-EDO-second-order-general-weight1}
The polynomials $\Phi_n$ and $\Phi_n^{\ast}$ satisfy
 \begin{align*}
      z(z^2-1)&\Phi_n''(z)+[(d+3-n)z^2+2(\lambda-\beta)z+\overline{d}+n-1]\Phi_n'(z) \\ 
      & -[n(d+2)z-(d+n+1)\ell_{n+1,n}-2n(\beta-\lambda)]\Phi_n(z)=- (d+n+1)\overline{\alpha}_n\Phi_n^{\ast}(z),
 \end{align*}
 and
 \begin{align*}
     z(1-z^2)(\Phi_n^{\ast})''&(z)- [(d+3-n)z^2+2(\lambda-\beta)z+\overline{d}+n-1](\Phi_n^{\ast})'(z) \\
     & +[d(n-2)z-2n(\beta-\lambda)-(d+n)\ell_{n,n-1}-\overline{\ell}_{n+1,n}-(\overline{d}+n)\alpha_{n}\overline{\alpha}_{n-1}]\Phi_n^{\ast}(z) \\
     & =[2(d+n)\alpha_{n-1}z+(\overline{d}+n+1)\alpha_n]\Phi_n(z).
 \end{align*}
\end{corollary}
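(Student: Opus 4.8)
The plan is to specialize the general second order differential equations of Theorems \ref{Thm3-second-order-1} and \ref{Thm4-second-order-1} to the coefficient data coming from the choice \eqref{A-B-generalizedJac}, and then use Lemma \ref{value-gamma-geneweight} (together with its auxiliary identity \eqref{formula1-need}) to collapse the nonlinear-looking expressions into the stated linear forms. First I would record, as already done in the proof of Theorem \ref{Firs-OrderDer-Generalweight}, the explicit polynomials $\widehat{A}(z)=z(1-z^2)$, $U_n(z)$, $V_n(z)$, $W_n(z)$, $Y_n(z)$. Note that here $\deg V_n\leq 1$ with $V_n(z)=-(d+n+1)\overline{\alpha}_nz-(\overline{d}+n)\overline{\alpha}_{n-1}$, so $V_n'(z)=-(d+n+1)\overline{\alpha}_n$ is the (constant) right-hand side that will appear, matching the claimed $-(d+n+1)\overline{\alpha}_n\Phi_n^{\ast}(z)$; likewise $W_n'(z)$ is linear and will give the right-hand side $[2(d+n)\alpha_{n-1}z+(\overline{d}+n+1)\alpha_n]\Phi_n(z)$ of the reversed equation after dividing out the common factor of $z$.

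Next I would substitute into Theorem \ref{Thm3-second-order-1}. Since $\Theta(|V_n|)=1$ (assuming $\overline{\alpha}_n\neq0$; the degenerate case is handled separately by differentiating \eqref{first-derivative-1} directly), the equation reads
\[
zA(z)\Phi_n''+\Big\{[zA(z)]'-U_n+\tfrac{zA(z)Y_n}{\widehat{A}}\Big\}\Phi_n'-\Big\{\tfrac{Y_nU_n-V_nW_n}{\widehat{A}}+U_n'\Big\}\Phi_n=V_n'\Phi_n^{\ast}.
\]
With $zA(z)=z(z^2-1)$ and $\widehat{A}(z)=z(1-z^2)=-zA(z)$, the ratio $zA(z)/\widehat{A}(z)=-1$, so the $\Phi_n'$-coefficient becomes simply $[z(z^2-1)]'-U_n-Y_n$, which is a polynomial computation giving $(d+3-n)z^2+2(\lambda-\beta)z+\overline{d}+n-1$ once $U_n$ and $Y_n$ are inserted and the $z$-factors cancelled. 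The genuine work is the $\Phi_n$-coefficient: one must show
\[
\frac{Y_n(z)U_n(z)-V_n(z)W_n(z)}{\widehat{A}(z)}+U_n'(z)
\]
is the linear polynomial $n(d+2)z-(d+n+1)\ell_{n+1,n}-2n(\beta-\lambda)$. The quantity $Y_nU_n-V_nW_n$ is a priori a quartic in $z$ with messy Verblunsky-dependent coefficients; the key claim is that it is divisible by $\widehat{A}(z)=z(1-z^2)$, with quotient a quadratic whose coefficients simplify via Lemma \ref{value-gamma-geneweight} and \eqref{formula1-need}. Concretely, the $z=0$ root of $Y_nU_n-V_nW_n$ is immediate since $Y_n(0)=0$; the roots at $z=\pm1$ follow from the para-orthogonality identities \eqref{formula1-GenerWeight}–\eqref{formula2-GenerWeight} established inside the proof of that lemma (indeed $|U_n(\pm1)|=|V_n(\pm1)|$ and $|W_n(\pm1)|=|Y_n(\pm1)|$ are exactly the statements $\Phi_n(\pm1)/\Phi_n^{\ast}(\pm1)$ has modulus one), so the numerator vanishes at $z=\pm1$ as well. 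This is the step I expect to be the main obstacle: verifying the divisibility cleanly and then pinning down the three coefficients of the resulting quadratic — here one uses Lemma \ref{value-gamma-geneweight} to turn $(\overline{d}+n)\alpha_n\overline{\alpha}_{n-1}+(d+n)\overline{\alpha}_n\alpha_{n-1}$ into $-2(\beta-\lambda)-\ell_{n+1,n}-\overline{\ell}_{n+1,n}$, and \eqref{formula1-need} to eliminate the remaining cross terms, collapsing the would-be quadratic down to the advertised linear polynomial.

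For the reversed equation I would run the symmetric argument through Theorem \ref{Thm4-second-order-1}: with $\Theta(|W_n|)=1$ and $\widehat{A}(z)/[zA(z)]=-1$, the $(\Phi_n^{\ast})'$-coefficient is $(\widehat{A})'(z)+Y_n(z)+U_n(z)$, and the $\Phi_n^{\ast}$-coefficient is $-[V_nW_n-U_nY_n]/[zA(z)]+Y_n'(z)$, which is the same ratio as before up to sign and hence again reduces to a linear polynomial by the identical divisibility-plus-Lemma argument. The remaining bookkeeping is to divide the whole reversed equation through by the common factor $z$ appearing in $\widehat{A}(z)$, $W_n(z)$ and $Y_n(z)$ (exactly as flagged in the proof of Corollary \ref{Edo-medida-ranga} for the analogous simplification in Example 1), which produces the stated coefficients $z(1-z^2)$, $-[(d+3-n)z^2+2(\lambda-\beta)z+\overline{d}+n-1]$, and the linear $\Phi_n^{\ast}$-coefficient, with right-hand side $[2(d+n)\alpha_{n-1}z+(\overline{d}+n+1)\alpha_n]\Phi_n(z)$ coming from $-W_n'(z)$ after the cancellation. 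In short: plug in, use $zA=-\widehat{A}$ to kill the rational terms, invoke Lemma \ref{value-gamma-geneweight} and \eqref{formula1-need} for the quadratic-to-linear collapse, and finally cancel the spurious factor of $z$.
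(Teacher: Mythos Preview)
Your approach is essentially the paper's: specialize Theorems \ref{Thm3-second-order-1} and \ref{Thm4-second-order-1} to the data of Theorem \ref{Firs-OrderDer-Generalweight}, exploit $zA(z)=-\widehat{A}(z)$, and show that $Y_nU_n-V_nW_n$ is divisible by $\widehat{A}$ with the quotient simplified via Lemma \ref{value-gamma-geneweight} and \eqref{formula1-need}. The paper carries this out by direct polynomial computation rather than an abstract divisibility argument, but the content is the same.

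Two small points. First, the modulus identities \eqref{formula1-GenerWeight}--\eqref{formula2-GenerWeight} by themselves do \emph{not} force $Y_nU_n-V_nW_n$ to vanish at $z=\pm1$: knowing $|U_n(\pm1)|=|V_n(\pm1)|$ and $|W_n(\pm1)|=|Y_n(\pm1)|$ does not give $U_nY_n=V_nW_n$ there. What does work is evaluating both first-order equations of Theorem \ref{Firs-OrderDer-Generalweight} at $z=\pm1$ (where the left sides vanish), which yields the exact ratio equalities $U_n(\pm1)/V_n(\pm1)=W_n(\pm1)/Y_n(\pm1)$ and hence the vanishing; alternatively, as the paper does, just expand $Y_nU_n-V_nW_n$ and factor $(z^2-1)$ explicitly using Lemma \ref{value-gamma-geneweight}. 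Second, no division by $z$ is needed in the reversed equation: with $\widehat{A}(z)=z(1-z^2)$ the leading term of Theorem \ref{Thm4-second-order-1} is already $z(1-z^2)(\Phi_n^{\ast})''$, and $-W_n'(z)=2(d+n)\alpha_{n-1}z+(\overline{d}+n+1)\alpha_n$ gives the stated right-hand side directly.
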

\begin{proof}Notice that using \eqref{formula1-need} we obtain
 $Y_n(z)U_n(z)-V_n(z)W_n(z) =-z\{dnz^3-[(d+n)\ell_{n,n-1}+2n(\beta-\lambda)]z^2-dnz-(\overline{d}+n)\overline{\ell}_{n,n-1}\}$, 
or, equivalently
\begin{align*}
    Y_n(z)U_n(z)-V_n(z)W_n(z)
     = & -z\big\{[dnz-(d+n)\ell_{n,n-1}-2n(\beta-\lambda)](z^2-1) \\
    &-[(\overline{d}+n)\overline{\ell}_{n,n-1}+(d+n)\ell_{n,n-1}-2n(\beta-\lambda)]\big\}. 
\end{align*}
The claim follows from Lemma \ref{value-gamma-geneweight} and Theorems \ref{Thm3-second-order-1},  \ref{Thm4-second-order-1}.
\end{proof}
\begin{corollary}\label{Edo-linear-segundaordem-MG} Let $\Phi_n$ be the MOPUC with respect the weight function \eqref{medida-geral}. Then
\begin{align*}
 z(z+\varepsilon_n)(z^2-1) \Phi_n''(z)+p_{3}(z)\Phi_n'(z)-p_{2}(z)\Phi_n(z)=0, 
\end{align*}
where  the sequence $\varepsilon_n$ is given by $(d+n+1)\overline{\alpha}_n\varepsilon_{n}=(\overline{d}+n)\overline{\alpha}_{n-1}$, and
\begin{align*}
p_{3}(z)=&[(d+3-n)z^2+2(\lambda-\beta)z+\overline{d}+n-1](z+\varepsilon_n)-z(z^2-1)],  \\
p_{2}(z)=&[n(d+2)z-(d+n+1)\ell_{n+1,n}-2n(\beta-\lambda)](z+\varepsilon_n)-nz^2 \\
&+[\ell_{n+1,n}+(d+n)\overline{\alpha}_n\alpha_{n-1}]z+\overline{d}+n. 
\end{align*}
\end{corollary}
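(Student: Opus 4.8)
The plan is to derive the linear second order equation for $\Phi_n$ from Theorem \ref{Thm3-second-order-1} (equivalently Corollary \ref{coro-linear-second-order-linear-phin}) by clearing denominators intelligently. Recall from the proof of Theorem \ref{Firs-OrderDer-Generalweight} that with the class $(2,2)$ choice \eqref{A-B-generalizedJac} we have $\widehat{A}(z)=z(1-z^2)$, so $zA(z)=z(z^2-1)=-\widehat{A}(z)$, and $V_n(z)=-(d+n+1)\overline{\alpha}_n z-(\overline{d}+n)\overline{\alpha}_{n-1}$ is linear in $z$ (hence $V_n'(z)\neq 0$ in general, so the generic branch of Theorem \ref{Thm3-second-order-1} applies). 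First I would write $V_n(z)=-(d+n+1)\overline{\alpha}_n(z+\varepsilon_n)$ where $\varepsilon_n$ is defined exactly so that this factorization holds, i.e. $(d+n+1)\overline{\alpha}_n\varepsilon_n=(\overline{d}+n)\overline{\alpha}_{n-1}$; this is the sequence in the statement. The point of this substitution is that the awkward factor $[\widehat{A}(z)]^{-1}=-[zA(z)]^{-1}$ appearing in Theorem \ref{Thm3-second-order-1}, multiplied by $V_n(z)$, is still not polynomial, but multiplying the whole equation through by the extra factor $(z+\varepsilon_n)$ and dividing out the common $(d+n+1)\overline{\alpha}_n$ turns everything into honest polynomials.

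Concretely, I would start from the linear equation in Corollary \ref{coro-linear-second-order-linear-phin}: dividing that equation by $-(d+n+1)\overline{\alpha}_n$ replaces $V_n(z)$ by $-(z+\varepsilon_n)$ throughout and, crucially, replaces $zA(z)V_n(z)$ in front of $\Phi_n''$ by $-z(z^2-1)(z+\varepsilon_n)$, which (up to the overall sign I will track) is the leading coefficient $z(z+\varepsilon_n)(z^2-1)$ claimed. Then I substitute $\widehat{A}(z)=-zA(z)=-z(z^2-1)$ so that the term $[Y_n(z)zA(z)-U_n(z)\widehat{A}(z)]V_n(z)[\widehat{A}(z)]^{-1}$ collapses to $-[Y_n(z)+U_n(z)]V_n(z)$, which is polynomial — this is where the $\widehat{A}=-zA$ coincidence does the real work. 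After that substitution the coefficient of $\Phi_n'$ becomes $V_n(z)[zA(z)]'-V_n'(z)zA(z)+[Y_n(z)+U_n(z)]V_n(z)$; dividing by $-(d+n+1)\overline{\alpha}_n$ and plugging in $U_n,Y_n$ from the proof of Theorem \ref{Firs-OrderDer-Generalweight} and simplifying with $\widehat{A}(z)=z(1-z^2)$, $[zA(z)]'=3z^2-1$, should produce exactly $p_3(z)=[(d+3-n)z^2+2(\lambda-\beta)z+\overline{d}+n-1](z+\varepsilon_n)-z(z^2-1)$, once one notices $Y_n(z)+U_n(z)=(n-d)z^2-2(\lambda-\beta)z-(\overline{d}+n)$ after combining like terms.

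For the coefficient of $\Phi_n$ the expression in Corollary \ref{coro-linear-second-order-linear-phin} is $V_n(z)U_n'(z)-V_n'(z)U_n(z)+[Y_n(z)U_n(z)-V_n(z)W_n(z)]V_n(z)[\widehat{A}(z)]^{-1}$. Here I would invoke the key computation already carried out in the proof of Corollary \ref{Thm-EDO-second-order-general-weight1}, namely the identity
\begin{equation*}
Y_n(z)U_n(z)-V_n(z)W_n(z)=-z(z^2-1)\{\,n(d+2)z-(d+n+1)\ell_{n+1,n}-2n(\beta-\lambda)\,\}+\text{(a term that cancels)},
\end{equation*}
more precisely the factored form $Y_nU_n-V_nW_n=-z\{[dnz-(d+n)\ell_{n,n-1}-2n(\beta-\lambda)](z^2-1)-[(\overline d+n)\overline\ell_{n,n-1}+(d+n)\ell_{n,n-1}-2n(\beta-\lambda)]\}$, the last bracket vanishing by Lemma \ref{value-gamma-geneweight}. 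Because of this, $[Y_nU_n-V_nW_n][\widehat A(z)]^{-1}=[Y_nU_n-V_nW_n]/(-z(z^2-1))$ is already the polynomial $n(d+2)z-(d+n+1)\ell_{n+1,n}-2n(\beta-\lambda)$ — i.e. the $\Phi_n$-coefficient appearing in Corollary \ref{Thm-EDO-second-order-general-weight1}. Multiplying that by $V_n(z)=-(d+n+1)\overline\alpha_n(z+\varepsilon_n)$, dividing the whole thing by $-(d+n+1)\overline\alpha_n$, and adding $-(V_nU_n'-V_n'U_n)/(-(d+n+1)\overline\alpha_n)=-(z+\varepsilon_n)U_n'(z)+U_n(z)$ should assemble to $p_2(z)=[n(d+2)z-(d+n+1)\ell_{n+1,n}-2n(\beta-\lambda)](z+\varepsilon_n)-nz^2+[\ell_{n+1,n}+(d+n)\overline\alpha_n\alpha_{n-1}]z+\overline d+n$, since $U_n'(z)=2nz-[\ell_{n+1,n}+(d+n)\overline\alpha_n\alpha_{n-1}]$ and $(z+\varepsilon_n)U_n'(z)-U_n(z)$ telescopes to $nz^2+2n\varepsilon_n z - \varepsilon_n[\ell_{n+1,n}+(d+n)\overline\alpha_n\alpha_{n-1}]+(\overline d+n)$, which one then folds into the rest. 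The main obstacle is purely bookkeeping: keeping the overall sign consistent when dividing by the negative quantity $-(d+n+1)\overline\alpha_n$, and verifying that $p_2$ as stated really is what drops out — in particular that no residual $\varepsilon_n$-only terms survive beyond what is displayed. A minor subtlety worth flagging is the degenerate case $\overline\alpha_n=0$ (so $V_n$ would be constant and this division illegal); in the generic semiclassical situation $\alpha_n\neq 0$, and one can note that if some $\alpha_n=0$ one falls back directly to the linear equation of Corollary \ref{Thm-EDO-second-order-general-weight1} with $\varepsilon_n$ interpreted via the limiting relation.
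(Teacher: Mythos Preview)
Your approach is correct and is essentially the paper's (implicit) one: specialize Corollary~\ref{coro-linear-second-order-linear-phin} using $\widehat A(z)=-zA(z)$ and the factorization $V_n(z)=-(d+n+1)\overline\alpha_n(z+\varepsilon_n)$, together with the computation of $Y_nU_n-V_nW_n$ from the proof of Corollary~\ref{Thm-EDO-second-order-general-weight1} and Lemma~\ref{value-gamma-geneweight}. Equivalently, and this is how the paper positions the corollary, one simply multiplies the first equation of Corollary~\ref{Thm-EDO-second-order-general-weight1} by $(z+\varepsilon_n)$ and replaces the right-hand side $-(d+n+1)\overline\alpha_n(z+\varepsilon_n)\Phi_n^\ast=V_n\Phi_n^\ast$ by $z(z^2-1)\Phi_n'-U_n\Phi_n$ via \eqref{first-derivative-1}; this yields $p_3$ and $p_2$ at once with no further algebra.

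Two small corrections to your write-up: (i) $[Y_nU_n-V_nW_n]/\widehat A(z)$ equals $dnz-(d+n)\ell_{n,n-1}-2n(\beta-\lambda)$, not the full $\Phi_n$-coefficient $n(d+2)z-(d+n+1)\ell_{n+1,n}-2n(\beta-\lambda)$ of Corollary~\ref{Thm-EDO-second-order-general-weight1}; the latter is obtained only after adding $U_n'$ and using $\ell_{n+1,n}=\ell_{n,n-1}+\overline\alpha_n\alpha_{n-1}$. (ii) In assembling $p_2$ you should \emph{add} $(V_nU_n'-V_n'U_n)/(-(d+n+1)\overline\alpha_n)=(z+\varepsilon_n)U_n'-U_n$, not its negative; with the correct sign the result is exactly $(z+\varepsilon_n)[n(d+2)z-(d+n+1)\ell_{n+1,n}-2n(\beta-\lambda)]-U_n(z)$, matching the stated $p_2$.
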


\begin{remark}
If $\beta=0$, then $d=b=\lambda+i\eta$ and $(\overline{b}+n+1)\alpha_{n}=(b+n)\alpha_{n-1}$ see \cite{Ranga-PAMS2010,Bracciali-Rampazzi-SRibeiro-JAT2023}. In this case, $(b+n)\ell_{n,n-1}=n\overline{b}$, $\varepsilon_{n}=1$, $p_{3}(z)=[(b+2-n)z+\overline{b}+n-1](z+1)^2$ and $p_{2}(z)=n(1+b)(z+1)^2$ and we recover the differential equation of Corollary \ref{Edo-medida-ranga}. 
\end{remark}

\subsubsection{Special case: Jacobi polynomials on the unit circle}\label{Subsec-JP}

Let us consider the semiclassical weight function in \eqref{medida-geral} with $\eta=0$, that is 
\begin{equation}\label{jacobi-unit-circle}
w(\theta)= [\sin^2(\theta/2)]^{\lambda}[\cos^2(\theta/2)]^{\beta}, \  \lambda>-1/2, \beta>-1/2. 
\end{equation}
This weight function produce the \textit{Jacobi polynomials on the unit circle} \cite{badkov1987systems}, that are given explicitly as 
\begin{equation*}
    \Phi_{2n-1}^{(\lambda,\beta)}(z)=(2z)^{n-1
}\left[P_{n}^{(\lambda-1/2,\beta-1/2)}\left(\frac{z^2+1}{2z}\right)+\frac{z^2-1}{2z}P_{n-1}^{(\lambda+1/2,\beta+1/2)}\left(\frac{z^2+1}{2z}\right)\right],
\end{equation*}
and
\begin{equation*}
    \Phi_{2n}^{(\lambda,\beta)}(z)=(2z)^{n
}\left[h_n\,P_{n}^{(\lambda-1/2,\beta-1/2)}\left(\frac{z^2+1}{2z}\right)+g_n\,\frac{z^2-1}{2z}P_{n-1}^{(\lambda+1/2,\beta+1/2)}\left(\frac{z^2+1}{2z}\right)\right],
\end{equation*}
where $h_n=(\lambda+\beta+n)/(\lambda+\beta+2n)$, 
$g_n=n/(\lambda+\beta+2n)$, and $P_{n}^{(a,b)}$ is the monic Jacobi polynomial of degree $n$. The Verblunsky coefficients $\alpha_n$ are real \cite{badkov1987systems}:
\begin{align*}
    \alpha_{n-1}=-\Phi_{n}(0)=-\frac{\lambda+(-1)^{n}\beta}{n+\lambda+\beta}.
\end{align*}
Notice that $\ell_{n,n-1}$ is also real and from Lemma \ref{value-gamma-geneweight} we have $(\lambda+\beta+n)\ell_{n,n-1}=-n(\beta-\lambda)$  and  $\ell_{n+1,n}+(\lambda+\beta+n)\alpha_{n}\alpha_{n-1}=\lambda-\beta$. 

By setting $\eta=0$ in Theorem \ref{Firs-OrderDer-Generalweight}, after some simplifications, we obtain the first order differential equations
\begin{align*} 
    z(z^2-1)\Phi_{n}'(z)=&[nz^2+(\beta-\lambda)z-\lambda-\beta-n]\Phi_n(z) \\
    & +\{[\lambda-(-1)^{n}\beta]z+\lambda+(-1)^n\beta\}\Phi_{n}^{\ast}(z),
\end{align*}
and
\begin{align*}
    (z^2-1)(\Phi_n^{\ast})'(z)=-&\{[\lambda+(-1)^n\beta]z+\lambda-(-1)^n\beta\}\Phi_{n}(z)\\
    &+[(\lambda+\beta)z+\lambda-\beta]\Phi_{n}^{\ast}(z). 
\end{align*}

Finally, from Corollary \ref{Thm-EDO-second-order-general-weight1} we obtain the second order differential equation
\begin{align*}
    z(z^2-1&)\Phi_n''(z)+\{(\lambda+\beta+3-n)z^2+2(\lambda-\beta)z+\lambda+\beta+n-1\}(\Phi_n)'(z) \\
    &-\{n(\lambda+\beta+2)z-(\beta-\lambda)(n-1)\}\Phi_{n}(z)=\{\lambda-(-1)^{n}\beta\}\Phi^{\ast}_n(z), 
\end{align*}
and if $\lambda-(-1)^n\beta\neq 0$, with $\varepsilon_{n}=[\lambda+(-1)^n\beta]/[\lambda-(-1)^n\beta]$, Corollary \ref{Edo-linear-segundaordem-MG} gives rise to 
\begin{equation}\label{eqsecond-jacobi-unit-circle}
    z(z^2-1) (z+\varepsilon_{n})\Phi_n''(z)+p_3(z)\Phi_n'(z)-p_{2}(z)\Phi_n(z)=0, 
\end{equation}
where the polynomials $p_2$ and $p_3$ read, in this particular case,
\begin{align*}
p_{3}(z)= &(\lambda+\beta+2-n)z^3+[2(\lambda-\beta)+(\lambda+\beta+3-n)\varepsilon_n]z^2 \\
& +[\lambda+\beta+n+2(\lambda-\beta)\varepsilon_{n}]z+(\lambda+\beta+n-1)\varepsilon_n,  \\
p_{2}(z)= &n(\lambda+\beta+1)z^2+n[(\lambda+\beta+2)\varepsilon_n+\lambda-\beta]z \\
    &+(n-1)(\lambda-\beta)\varepsilon_n
    +\lambda+\beta+n. 
\end{align*}

\begin{remark}
The linear second order differential equation for the Jacobi polynomials on the unit circle, \eqref{eqsecond-jacobi-unit-circle}, was deduced in \cite{ISMAIL-JAT_2001} using a different approach. If $\beta=0$, we obtain the second order differential equation for circular Jacobi polynomials \cite{ISMAIL-JAT_2001}.
\end{remark}

\subsection{Example 4}
As another example, we consider the semiclassical weight function 
\begin{equation}\label{medida-besselgeneralized}
w(\theta)=e^{2|u|\sin(\theta+\arg(u))}, \quad \theta \in [0,2\pi], \  u\in\mathbb{C}, 
\end{equation}
that satisfies \eqref{Eq-Tipo-Pearson-1} with
\begin{align}\label{AeB-besselG} A(z) = z \ \mbox{and} \ B(z) = uz^2+iz+\overline{u}.  
\end{align}
This measure appeared in \cite{marcellan2017sobolev} in the context of coherent pairs of measures. If $u=it/2$ and $t\in\mathbb{R}$, we obtain the weight function of the \textit{modified Bessel polynomials}, $w(\theta)=e^{t\cos(\theta)}$ \cite{ismail2005classical} already introduced in Example 2 (See Remark \ref{remark-mod-bessel}).
The coefficients from Theorem \ref{Thm1-main} and Corollary \ref{Coro1-formula} are
\begin{align*}
 & \s_{n,n-1}=i\,\overline{u}(1-|\alpha_{n-1}|^2), \ \s_{n,n}=n-i\,\overline{u}{\alpha}_n\overline{\alpha}_{n-1},  \ 
\R_{n,n}=iu\overline{\alpha}_n, \ \p_{n,n}=\s_{n,n+1}=0 \\
& \s_{n,n}=n+iu\overline{\alpha}_n\alpha_{n-1} \ \mbox{and} \ \s_{n,n+1}=-\frac{\overline{\ell}_{n+1,n}}{1-|\alpha_{n}|^2}+iu-i\overline{u}\alpha_{n+1}\overline{\alpha}_{n-1}. 
 \end{align*}
 The structure relation and the discrete equation for the Verblunsky coefficients were recently explored for this measure in \cite{Bracciali-Rampazzi-SRibeiro-JAT2023}. 

 \begin{theorem}\label{BesselG-edo-ordem1} Let $\Phi_n$ be the MOPUC with respect to the weight function \eqref{medida-besselgeneralized}. Then
 \begin{align*}
     z^2\Phi_n'(z) &=[(n-i\,\overline{u}\alpha_n\overline{\alpha}_{n-1})z+i\,\overline{u}]\Phi_n(z)+[i\,u\overline{\alpha}_nz+i\,\overline{u}\,\overline{\alpha}_{n-1}]\Phi_n^{\ast}(z), \\ z(\Phi_n^{\ast})'(z) &=i[u\alpha_{n-1}z+\overline{u}\alpha_n]\Phi_n(z)+iu[z-\overline{\alpha}_n{\alpha}_{n-1}]\Phi_n^{\ast}(z) .
 \end{align*} 
 \end{theorem}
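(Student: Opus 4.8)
The plan is to specialize the general first-order differential equations of Theorem~\ref{Thm-main-firstorder-edo} to the weight function \eqref{medida-besselgeneralized}, exactly as was done in the proofs of Theorems~\ref{Thm-aplic-medidaranga-edo1}, \ref{EDO-ordem1-newnew}, and \ref{Firs-OrderDer-Generalweight}. Since the Pearson-type equation \eqref{Eq-Tipo-Pearson-1} holds here with $A(z)=z$ and $B(z)=uz^2+iz+\overline{u}$ as in \eqref{AeB-besselG}, I first check that the hypothesis $A(1)[w(2\pi)-w(0)]=0$ is satisfied: indeed $w(2\pi)=w(0)$ because $w(\theta)=e^{2|u|\sin(\theta+\arg u)}$ is $2\pi$-periodic, so Theorem~\ref{Thm-main-firstorder-edo} applies. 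Writing $A$ and $B$ in the form \eqref{defin-A-B-grau3}, we have $a_3=a_2=a_1=0$, $a_0=1$, $b_3=0$, $b_2=u$, $b_1=i$, $b_0=\overline{u}$.

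Next I would substitute these values into the definitions of $\widehat{A}(z)$, $U_n(z)$, $V_n(z)$, $W_n(z)$, $Y_n(z)$ from Theorem~\ref{Thm-main-firstorder-edo}, together with the already-computed coefficients $\s_{n,n}=n+iu\overline{\alpha}_n\alpha_{n-1}$, $\s_{n,n+1}=0$, $\R_{n,n}=iu\overline{\alpha}_n$, $\p_{n,n}=0$ listed just before the statement. A direct computation gives $\widehat{A}(z)=\overline{a}_0z^3+\overline{a}_1z^2+\overline{a}_2z+\overline{a}_3=z^3$; since $n a_3=0$, equation \eqref{first-derivative-1} reads $z\cdot z\,\Phi_n'(z)=z^2\Phi_n'(z)=U_n(z)\Phi_n(z)+V_n(z)\Phi_n^\ast(z)$ with $U_n(z)=(s_{n,n+1}+na_3\overline{\alpha}_{n+1}\alpha_n)z^2+\s_{n,n}z+(ib_0+na_0)=\s_{n,n}z+(i\overline{u}+n)$; here one uses the alternative value $\s_{n,n}=n+iu\overline{\alpha}_n\alpha_{n-1}$ so that $U_n(z)=(n+iu\overline{\alpha}_n\alpha_{n-1})z+i\overline{u}$, which matches the claimed coefficient $[(n-i\overline{u}\alpha_n\overline{\alpha}_{n-1})z+i\overline{u}]$ once one notes $iu\overline{\alpha}_n\alpha_{n-1}=-i\overline{u}\alpha_n\overline{\alpha}_{n-1}$ is \emph{not} generally true — rather, the two forms of $\s_{n,n}$ in the preamble simply give two equivalent ways to write the same quantity, and I would use whichever form produces the stated expression. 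Similarly $V_n(z)=[ib_3-(n-2)a_3]\overline{\alpha}_nz^2-(\s_{n,n+1}\overline{\alpha}_n+na_3\overline{\alpha}_{n+1}-\R_{n,n})z+(ib_0+na_0)\overline{\alpha}_{n-1}=\R_{n,n}z+(i\overline{u}+n)\overline{\alpha}_{n-1}$. For the reversed polynomial equation, $\widehat{A}(z)(\Phi_n^\ast)'(z)=z^3(\Phi_n^\ast)'(z)$ and \eqref{first-derivative-2} reduces after dividing by the common factor $z^2$ (legitimate since both sides are divisible by $z^2$, as $W_n$ and $Y_n$ each start at order $z^2$ when $a_3=0$) to the claimed form with $W_n(z)=(-i\overline{b}_0+n\overline{a}_0)\alpha_{n-1}z^2+\overline{\R}_{n,n}z+\overline{\p}_{n,n}$ and $Y_n(z)=(-i\overline{b}_0)z^2+(\overline{\s}_{n,n}-n\overline{a}_1)z+\overline{\s}_{n,n+1}$, and reinserting $\overline{b}_0=u$, $\overline{\R}_{n,n}=\overline{iu\overline{\alpha}_n}=-i\overline{u}\alpha_n$ — wait, $\overline{iu\overline{\alpha}_n}=-i\overline{u}\alpha_n$, while the claim has $i\overline{u}\alpha_n$; I would track the conjugation carefully here, as $W_n$ in Theorem~\ref{Thm-main-firstorder-edo} is built from $\overline{\R}_{n,n}$ and $\overline{\p}_{n,n}$ with a sign convention that must be matched against $-i\overline{b}_0=-iu$ giving $-iu\alpha_{n-1}$ versus the claimed $iu\alpha_{n-1}$, so an overall sign from the reduction $-\widehat{A}(z)(\Phi_n^\ast)'(z)=\cdots$ in \eqref{first-derivative-2} must be carried through.

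The main obstacle is purely bookkeeping: there are two equivalent formulas for $\s_{n,n}$ and for $\s_{n,n+1}$ (one from Theorem~\ref{Thm1-main}, one from Corollary~\ref{Coro1-formula}), and the stated differential equations are written in a ``mixed'' form that uses $\overline{u}\alpha_n\overline{\alpha}_{n-1}$ in $U_n$ but $\overline{\alpha}_n\alpha_{n-1}$ in $Y_n$; so I must verify the identity $iu\overline{\alpha}_n\alpha_{n-1}=-i\overline{u}\alpha_n\overline{\alpha}_{n-1}$, which is false in general, and therefore the resolution is that Theorem~\ref{Thm1-main} gives $\s_{n,n}=ib_1+(n+1)a_1-a_0\overline{\ell}_{n,n-1}-[ib_0+(n+1)a_0]\alpha_n\overline{\alpha}_{n-1}=i\cdot i+0-\overline{\ell}_{n,n-1}-[i\overline{u}+n+1]\alpha_n\overline{\alpha}_{n-1}=-1-\overline{\ell}_{n,n-1}-(i\overline{u}+n+1)\alpha_n\overline{\alpha}_{n-1}$, and this must be shown equal to $n-i\overline{u}\alpha_n\overline{\alpha}_{n-1}$, i.e. $-1-\overline{\ell}_{n,n-1}-(n+1)\alpha_n\overline{\alpha}_{n-1}=n$, which is a relation among the $\ell$'s and $\alpha$'s that presumably follows from \eqref{prop-coef-n-1} together with the class-$(1,2)$ structure; alternatively and more cleanly, one simply takes the value $\s_{n,n}=n+iu\overline{\alpha}_n\alpha_{n-1}$ directly from the preamble (which was derived via Corollary~\ref{Coro1-formula}) and substitutes it into the generic $U_n$, so that no such identity is needed. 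Once the correct bookkeeping route is chosen, the proof is a one-line application of Theorem~\ref{Thm-main-firstorder-edo} followed by cancellation of the factor $z^2$ in the reversed-polynomial equation, exactly parallel to the proof of Theorem~\ref{Firs-OrderDer-Generalweight}.
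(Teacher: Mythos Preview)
Your overall strategy is exactly the paper's: specialize Theorem~\ref{Thm-main-firstorder-edo} to $A(z)=z$, $B(z)=uz^2+iz+\overline{u}$ and read off $\widehat{A},U_n,V_n,W_n,Y_n$. However, you misread the coefficients of $A$: since $A(z)=a_3z^3+a_2z^2+a_1z+a_0$, here $a_1=1$ and $a_0=a_2=a_3=0$, not $a_0=1$. This single slip cascades through your entire computation and is the source of all the apparent difficulties you describe.

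With the correct values: $\widehat{A}(z)=\overline{a}_0z^3+\overline{a}_1z^2+\overline{a}_2z+\overline{a}_3=z^2$ (not $z^3$); $ib_0+na_0=i\overline{u}$ (no extra $n$), so $V_n(z)=\R_{n,n}z+i\overline{u}\,\overline{\alpha}_{n-1}$ directly; and Theorem~\ref{Thm1-main} gives $\s_{n,n}=ib_1+(n{+}1)a_1-a_0\overline{\ell}_{n,n-1}-[ib_0+(n{+}1)a_0]\alpha_n\overline{\alpha}_{n-1}=-1+(n{+}1)-i\overline{u}\alpha_n\overline{\alpha}_{n-1}=n-i\overline{u}\alpha_n\overline{\alpha}_{n-1}$ immediately, so no auxiliary identity between the ``two forms'' is needed at all. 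For the reversed equation, $W_n(z)=-z(iu\alpha_{n-1}z+i\overline{u}\alpha_n)$ and $Y_n(z)=-z(iuz-iu\overline{\alpha}_n\alpha_{n-1})$, and since $\widehat{A}(z)=z^2$, equation~\eqref{first-derivative-2} becomes $-z^2(\Phi_n^\ast)'=W_n\Phi_n+Y_n\Phi_n^\ast$; one divides by $-z$ (not $z^2$) to obtain the stated form, with the sign issues you worried about resolving automatically. The paper's proof is precisely this one-line substitution.
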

\begin{proof}
If $A(z)$ and  $B(z)$ are as in \eqref{AeB-besselG}, from Theorem \ref{Thm-main-firstorder-edo} we get $\widehat{A}(z)=z^2$, $U_n(z)=(n-i\,\overline{u}\alpha_n\overline{\alpha}_{n-1})z+i\,\overline{u}$, $V_n(z)=iu\overline{\alpha}_nz+i\,\overline{u}\,\overline{\alpha}_{n-1}$, $W_n(z)=-z(iu\alpha_{n-1}z+i\,\overline{u}\alpha_n)$ and $Y_n(z)=-z(iuz-iu\overline{\alpha}_n\alpha_{n-1})$. 
\end{proof}

\begin{corollary}\label{corollary-GBessel-2} The polynomials $\Phi_n$ and $\Phi_n^{\ast}$ satisfy 
 \begin{align*}
      z^2\Phi_n''(z)&+[-iuz^2+(2-n)z-i\overline{u}]\Phi_n'(z)-[n(1-iuz)+|u|^2(1-|\alpha_n|^2)(1-|\alpha_{n-1}|^2)\\&+i(nu\overline{\alpha}_n\alpha_{n-1}-\overline{u}\alpha_n\overline{\alpha}_{n-1})]\Phi_n(z)=iu\overline{\alpha}_n\Phi_n^*(z), 
\end{align*}
and
\begin{align*}
z^2(\Phi_n^{\ast})''(z) &+[-iuz^2+(2-n)z-i\overline{u}](\Phi_n^{\ast})'(z)+[iu(n-2)z-|u|^2(1-|\alpha_n|^2)(1-|\alpha_{n-1}|^2) \\ & +iu\overline{\alpha}_n\alpha_{n-1}(1-n)]\Phi_n^{\ast}(z)=[2iu\alpha_{n-1}z+i\overline{u}\alpha_n]\Phi_n(z). 
 \end{align*}
\end{corollary}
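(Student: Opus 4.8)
The plan is to derive both second-order differential equations directly from Theorem~\ref{Thm3-second-order-1} and Theorem~\ref{Thm4-second-order-1}, using the explicit first-order data $\widehat{A}(z)=z^2$, $U_n$, $V_n$, $W_n$, $Y_n$ recorded in the proof of Theorem~\ref{BesselG-edo-ordem1}. Since here $A(z)=z$, we have $zA(z)=z^2=\widehat{A}(z)$, so the coefficient $[\widehat{A}(z)]^{-1}$ and $[zA(z)]^{-1}$ appearing in those theorems are the \emph{same} rational factor $z^{-2}$; this is the structural simplification that makes the final equations polynomial rather than merely rational. Note also that $V_n\not\equiv 0$ and $W_n\not\equiv 0$ generically (they vanish only in degenerate cases of the Verblunsky coefficients), so the step-function $\Theta$ in those theorems equals $1$.

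First I would compute the combination $Y_n(z)U_n(z)-V_n(z)W_n(z)$, which is the quantity controlling the $\Phi_n$-coefficient in Theorem~\ref{Thm3-second-order-1} and the $\Phi_n^{\ast}$-coefficient in Theorem~\ref{Thm4-second-order-1}. Substituting the four quadratics/linear polynomials, the leading and trailing $z$-powers should factor out a single $z$, and the remaining quadratic in $z$ should collapse using the Szeg\H{o}-recurrence identity $\kappa_n^2/\kappa_{n+1}^2=1-|\alpha_n|^2$ together with the relation $\ell_{n,n-1}=\ell_{n-1,n-2}+\overline{\alpha}_{n-1}\alpha_{n-2}$ from \eqref{prop-coef-n-1}; one expects a clean expression of the form $-z\{n(1-iuz)+|u|^2(1-|\alpha_n|^2)(1-|\alpha_{n-1}|^2)+i(nu\overline{\alpha}_n\alpha_{n-1}-\overline{u}\alpha_n\overline{\alpha}_{n-1})\}\cdot(\text{something})$ after dividing by $\widehat{A}(z)=z^2$. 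Then I would assemble the $\Phi_n'$-coefficient, namely $[zA(z)]'-U_n(z)+zA(z)Y_n(z)[\widehat{A}(z)]^{-1}=2z-U_n(z)+Y_n(z)$, which is immediate, and $U_n'(z)$, which is the constant $n-i\overline{u}\alpha_n\overline{\alpha}_{n-1}$; the right-hand side is $V_n'(z)\Phi_n^{\ast}(z)=iu\overline{\alpha}_n\Phi_n^{\ast}(z)$. Collecting terms and multiplying through by $z^2$ yields the stated equation for $\Phi_n$.

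For $\Phi_n^{\ast}$ I would run the mirror computation in Theorem~\ref{Thm4-second-order-1}: the $(\Phi_n^{\ast})'$-coefficient is $(\widehat{A})'(z)+Y_n(z)-\widehat{A}(z)U_n(z)[zA(z)]^{-1}=2z+Y_n(z)-U_n(z)$ (the same combination as before, reflecting the symmetry $zA=\widehat{A}$), the $\Phi_n^{\ast}$-coefficient involves $[V_n W_n-U_n Y_n][zA(z)]^{-1}+Y_n'(z)$ with $Y_n'(z)=-(2iuz-iu\overline{\alpha}_n\alpha_{n-1})$, and the right-hand side is $-W_n'(z)\Phi_n(z)=-[-(2iu\alpha_{n-1}z+i\overline{u}\alpha_n)]\Phi_n(z)=(2iu\alpha_{n-1}z+i\overline{u}\alpha_n)\Phi_n(z)$. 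The bookkeeping must be done carefully because $Y_n U_n - V_n W_n$ reappears with the opposite role and one must track a sign, but no new identity is needed beyond the one used for $\Phi_n$.

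The main obstacle is purely the algebraic simplification of $Y_n(z)U_n(z)-V_n(z)W_n(z)$: one must verify that the $z^2$ in $\widehat{A}(z)$ genuinely divides it and that the quotient simplifies to a \emph{polynomial} of the claimed low degree, which forces a cancellation of the cross terms $u\overline{u}|\alpha_n|^2$, $u\overline{u}|\alpha_{n-1}|^2$, etc., against the bare $|u|^2$ term — this is exactly where the identities $1-|\alpha_n|^2=\kappa_n^2/\kappa_{n+1}^2$ and the recurrence for $\ell_{n,n-1}$ enter. Once that factorization is confirmed, everything else is a routine substitution into Theorems~\ref{Thm3-second-order-1} and \ref{Thm4-second-order-1}, together with the observation $\Theta(|V_n|)=\Theta(|W_n|)=1$ in the generic case; the degenerate cases $V_n\equiv 0$ or $W_n\equiv 0$ are covered by the $\Theta(0)=0$ branch and give the same polynomial equations by continuity in the coefficients.
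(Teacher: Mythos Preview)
Your overall strategy is the same as the paper's: plug the data from Theorem~\ref{BesselG-edo-ordem1} into Theorems~\ref{Thm3-second-order-1} and~\ref{Thm4-second-order-1}, and simplify $Y_n(z)U_n(z)-V_n(z)W_n(z)$ so that $\widehat{A}(z)=z^2$ divides it. That part is fine.

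The gap is in the ``main obstacle'' paragraph: you have misidentified the algebraic input that makes the simplification work. Expanding directly gives
\[
Y_nU_n-V_nW_n=-z\Big\{\big(iun+|u|^2\alpha_n\overline{\alpha}_{n-1}+u^2\overline{\alpha}_n\alpha_{n-1}\big)z^2+(\cdots)z+\big(|u|^2\overline{\alpha}_n\alpha_{n-1}+\overline{u}^2\alpha_n\overline{\alpha}_{n-1}\big)\Big\},
\]
and for $z^2$ to divide this you need the constant term in braces (and, for the clean $z^2$-coefficient, the bracketed term in $z^2$) to vanish. Neither the norm relation $1-|\alpha_n|^2=\kappa_n^2/\kappa_{n+1}^2$ nor the $\ell_{n,n-1}$ recurrence from~\eqref{prop-coef-n-1} touches these terms: $\ell_{n,n-1}$ does not even appear in $U_n,V_n,W_n,Y_n$ for this weight. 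What is actually needed is the weight-specific identity
\[
u\,\overline{\alpha}_n\alpha_{n-1}+\overline{u}\,\alpha_n\overline{\alpha}_{n-1}=0,
\]
which the paper obtains by equating the two formulas for $\s_{n,n}$ (one from Theorem~\ref{Thm1-main}, one from Corollary~\ref{Coro1-formula}): here $\s_{n,n}=n-i\overline{u}\alpha_n\overline{\alpha}_{n-1}=n+iu\overline{\alpha}_n\alpha_{n-1}$. This identity (and its complex conjugate) kills both offending terms and is \emph{also} what collapses the $\Phi_n'$-coefficient $2z-U_n(z)+Y_n(z)$ to the stated $-iuz^2+(2-n)z-i\overline{u}$. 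Once you replace your proposed identities with this one, the rest of your outline goes through exactly as in the paper.
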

\begin{proof} Notice that
\begin{align*}
    Y_n(z)U_n(z)-V_n(z)W_n(z) = -z&\big\{[iun+|u|^2\alpha_{n}\overline{\alpha}_{n-1}+(u)^2\overline{\alpha}_n\alpha_{n-1}]z^2 \\
    & -[iun\overline{\alpha}_n\alpha_{n-1}+|u|^{2}(1-|\alpha_n|^2)(1-|\alpha_{n-1}|^2)]z \\
    & +|u|^2\overline{\alpha}_{n}{\alpha}_{n-1}+(\overline{u})^2{\alpha}_n\overline{\alpha}_{n-1}
    \big\}.
\end{align*}
We simplify the expression above by comparing the two formulas for $\s_{n,n}$: $|u|^2\alpha_{n}\overline{\alpha}_{n-1}+(u)^2\overline{\alpha}_n\alpha_{n-1}=0.$ We then conclude the proof  using Theorems \ref{Thm3-second-order-1} and  \ref{Thm4-second-order-1}.
\end{proof}

\subsubsection{Special case: modified Bessel polynomials}
\label{Sec-example-Bessel-Polynomials}
Let $u=it/2$ in the semiclassical weight function \eqref{medida-besselgeneralized}, that is, 
\begin{equation*}
w(\theta)=e^{t\cos(\theta)},
\end{equation*}
which is the weight function of the modified Bessel polynomials. 
The Verblunsky coefficients satisfy the discrete Painlevé II equation, given in \eqref{tipo-painleve-absolute-value} for $\lambda=0$ \cite{periwal1990unitary,vanassche2018orthogonal}. 
Let $\Phi_n$ be the modified Bessel polynomial of degree $n$, then from Theorem \ref{BesselG-edo-ordem1} or Theorem \ref{edo-2-order-bessel-generalized}, we have
    \begin{align*}
z^2\Phi_n'(z)&=(t/2)[(2nt^{-1}-\alpha_n\alpha_{n-1})z+1]\Phi_n(z)-(t/2)[{\alpha}_nz-{\alpha}_{n-1}]\Phi_n^{\ast}(z),  \\ -z(\Phi_n^{\ast})'(z)&=(t/2)[\alpha_{n-1}z-\alpha_n]\Phi_n(z)+(t/2)[z-{\alpha}_n{\alpha}_{n-1}]\Phi_n^{\ast}(z).
 \end{align*}

Equivalent form of these equations appeared in \cite{branquinho2024} in a different context and the connection between the results follows from Corollary \ref{Coro1-edo-first-order}.  As for the second order equations, Corollary \ref{corollary-GBessel-2} implies 
   \begin{align*}
        z^2\Phi_n''(z)  &+\big[(t/2)z^2+(2-n)z-(t/2)\big]\Phi_n'(z) \\ \nonumber
         &-\big[n+n(t/2)z+(t/2)^2(1-\alpha_n^2)(1-\alpha_{n-1}^2)-(t/2)\alpha_n\alpha_{n-1}(n+1)\big]\Phi_n(z) \\
         &=-(t/2)\alpha_n\Phi_n^*(z),
    \end{align*}
    and
    \begin{align*}
         z^2(\Phi_n^{\ast})''(z) & +\big[(t/2)z^2+(2-n)z-(t/2)\big](\Phi_n^{\ast})'(z) \\ 
         &- \big[(t/2)(n-2)z+(t/2)^2(1-\alpha_n^2)(1-\alpha_{n-1}^2)+(t/2)\alpha_n\alpha_{n-1}(1-n)\big]\Phi_n^{\ast}(z)  \\
         & =[-t\alpha_{n-1}z+(t/2)\alpha_n]\Phi_n(z). 
   \end{align*}
By using the discrete Painvelé II equation for the Verblunsky coefficients, it is possible to show that these differential equations are equivalent to the ones obtained in \cite{branquinho2024}.

\subsection{Example 5}
As a final example, let us consider the semiclassical weight function 
\begin{align}\label{Exemplo3-JAT-2013}
    w(\theta)=e^{2\Re(u/\overline{r})\arg(1-re^{-i\theta})}|e^{i\theta}-r|^{-2\Im(u/\overline{r})}. 
\end{align}
where $u,r\in \mathbb{C}$, $|r|\neq 1$, $r\neq 0$. This measure was recently explored in \cite{marcellan2017sobolev} in the context of coherent pairs of measures. This weight function  satisfies \eqref{Eq-Tipo-Pearson-1} with 
\begin{align}
&A(z) =  (z-r)\left(z-\frac{1}{\overline{r}}\right), \notag  \\ 
&  B(z) = \left(\frac{2i\overline{r}-{u}}{\overline{r}}\right)z^2+\left[\frac{2\Re(ur)-(1+|r|^2)i}{\overline{r}}\right]z-\frac{\overline{u}}{\overline{r}}. \label{A-B-ultimo-example}
\end{align}
Therefore, the coefficients from Theorem \ref{Thm1-main} and Corollary \ref{Coro1-formula} are
\begin{align*}
    &\overline{r}\s_{n,n-1}=(nr-i\overline{u})(1+|\alpha_{n-1}|^2), \ \s_{n,n+1}=n, \ \overline{r}\R_{n,n}=-(\overline{r}+iu)\overline{\alpha}_n, \ \p_{n,n}=0,  \\
    &\overline{r}\s_{n,n}=[2i\Re(ur)-n(1+|r|^2)-r(\overline{\ell}_{n+1,n}+n\alpha_n\overline{\alpha}_{n-1})+i\overline{u}\alpha_n\overline{\alpha}_{n-1}], \\
    & \overline{r}\s_{n,n}=-n(1+|r|^2)-\overline{r}\ell_{n+1,n}-(\overline{r}n+iu)\overline{\alpha}_n\alpha_{n-1}. 
\end{align*}

\begin{theorem}\label{Edo-fistorder-Exemple3-JAT-2023} Let $\Phi_{n}$ be the MOPUC with respect to the weight function \eqref{Exemplo3-JAT-2013}. Then
\begin{align*}
    z(z-r)(\overline{r}z-1)&\Phi_n'(z) = 
     \{-[\overline{r}(n+1)+iu]\overline{\alpha}_nz+(rn+\overline{iu})\overline{\alpha}_{n-1}\}\Phi^{\ast}_{n}(z)\\
    &+\{\overline{r}nz^2-[n(1+|r|^2)+\overline{r}\ell_{n+1,n}+(\overline{r}n+iu)\overline{\alpha}_{n}\alpha_{n-1}]z+rn+\overline{iu}\}\Phi_n(z), \\
\end{align*} 
and 
\begin{align*}
    (z-r)(1-\overline{r}z)(\Phi_n^{\ast})'(z)  =\{(\overline{r}n&+iu)\alpha_{n-1}z-[r(n+1)+\overline{iu}]\alpha_{n}\}\Phi_n(z) \\
    & +\{iuz - r\overline{\ell}_{n+1,n}-(rn+\overline{iu})\alpha_n\overline{\alpha}_{n-1}\}\Phi_{n}^{\ast}(z). 
\end{align*}
\end{theorem}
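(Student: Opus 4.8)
The plan is to proceed exactly as in the proofs of Theorems \ref{Thm-aplic-medidaranga-edo1}, \ref{EDO-ordem1-newnew}, \ref{Firs-OrderDer-Generalweight} and \ref{BesselG-edo-ordem1}: the weight function \eqref{Exemplo3-JAT-2013} satisfies the Pearson-type equation \eqref{Eq-Tipo-Pearson-1} with the $A(z)$ and $B(z)$ displayed in \eqref{A-B-ultimo-example}, so that (after checking that the boundary term $A(1)[w(2\pi)-w(0)]$ vanishes) Theorem \ref{Thm-main-firstorder-edo} applies verbatim. The only structural feature specific to this example is that $A(z)=(z-r)(z-1/\overline{r})$ has degree two, hence $a_3=b_3=0$; this is precisely why $\p_{n,n}=0$ and why the polynomials $U_n,V_n,W_n,Y_n$ of Theorem \ref{Thm-main-firstorder-edo} drop one degree relative to the generic case.

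First I would record the coefficients of $A$ and $B$ from \eqref{A-B-ultimo-example} --- namely $a_2=1$, $a_1=-(r+1/\overline{r})$, $a_0=r/\overline{r}$, together with the corresponding $b_2,b_1,b_0$ --- and substitute them, along with the values $\s_{n,n-1},\s_{n,n},\s_{n,n+1},\R_{n,n},\p_{n,n}$ already listed above, into the formulas for $\widehat{A},U_n,V_n,W_n,Y_n$ given in Theorem \ref{Thm-main-firstorder-edo}. A one-line factorization then shows $\widehat{A}(z)=(\overline{r}/r)\,zA(z)=(1/r)\,z(z-r)(\overline{r}z-1)$, so $zA(z)$ and $\widehat{A}(z)$ differ only by the unimodular constant $\overline{r}/r$. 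Multiplying \eqref{first-derivative-1} through by $\overline{r}$ turns it into the first displayed equation, with $\Phi_n$-coefficient $\overline{r}\,U_n(z)$ and $\Phi_n^{\ast}$-coefficient $\overline{r}\,V_n(z)$. For the second equation, note that because $a_3=\p_{n,n}=0$ and $\s_{n,n+1}=na_2$, both $W_n(z)$ and $Y_n(z)$ are divisible by $z$; cancelling this common factor against the $z$ in $\widehat{A}(z)$ and rescaling \eqref{first-derivative-2} by $-r$ yields $(z-r)(1-\overline{r}z)(\Phi_n^{\ast})'(z)=r\,[W_n(z)/z]\,\Phi_n(z)+r\,[Y_n(z)/z]\,\Phi_n^{\ast}(z)$, which is the claimed form.

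What remains is purely computational: simplifying $\overline{r}U_n$, $\overline{r}V_n$, $rW_n/z$ and $rY_n/z$ into the compact coefficients in the statement, which amounts to disciplined bookkeeping with the conjugations and with the denominators $1/\overline{r}$ and $1/r$ (using $\overline{iu}=-i\overline{u}$, $\overline{a}_0=\overline{r}/r$, and so on). In particular one must verify that the constant terms coming from $\overline{\R}_{n,n}$ and $\overline{\s}_{n,n}$ collapse --- after the $-n(1+|r|^2)$ contributions cancel --- to $-[r(n+1)+\overline{iu}]\alpha_n$ and $-r\overline{\ell}_{n+1,n}-(rn+\overline{iu})\alpha_n\overline{\alpha}_{n-1}$, respectively. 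No idea beyond Theorem \ref{Thm-main-firstorder-edo} is needed; I expect the only real obstacle to be keeping the several conjugate and reciprocal substitutions straight while simplifying.
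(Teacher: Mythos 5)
Your proposal is correct and follows essentially the same route as the paper: substitute the coefficients of $A$ and $B$ from \eqref{A-B-ultimo-example} into Theorem \ref{Thm-main-firstorder-edo}, observe that $r\widehat{A}(z)=z(z-r)(\overline{r}z-1)$ and that $W_n$ and $Y_n$ acquire a common factor $z$ because $a_3=b_3=0$, then rescale \eqref{first-derivative-1} by $\overline{r}$ and \eqref{first-derivative-2} by $r/z$ to land on the stated coefficients. (Only a cosmetic slip: the second equation is obtained by multiplying by $+r$ after cancelling $z$, not $-r$; the sign is already absorbed by the minus in \eqref{first-derivative-2}, and the final formula you wrote is the correct one.)
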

\begin{proof} If $A(z)$ and $B(z)$ are as in \eqref{A-B-ultimo-example}, from Theorem \ref{Thm-main-firstorder-edo} we obtain 
$r\widehat{A}(z)=z(z-r)(\overline{r}z-1)$,
$\overline{r}U_n(z)=\overline{r}nz^2-[n(1+|r|^2)+\overline{r}\ell_{n+1,n}+(\overline{r}n+iu)\overline{\alpha}_n\alpha_{n-1}]z+\overline{iu}+{rn}$, 
$\overline{r}V_n(z)=-[\overline{r}(n+1)+iu]\overline{\alpha}_nz+(rn+\overline{iu})\overline{\alpha}_{n-1}$,
$rW_n(z)=z\{(\overline{r}n+iu)\alpha_{n-1}z-[r(n+1)+\overline{iu}]\alpha_{n}\}$ and
$rY_n(z)=z[iuz-r\overline{\ell}_{n+1,n}-(rn+\overline{iu})\alpha_n\overline{\alpha}_{n-1}]$.  
\end{proof}

Before we proceed to the second order differential equations, the following technical result is useful to express the coefficients of the equation in a more compact form.

\begin{lemma}\label{identi-exem3-jaat2023}The following identities hold,
\begin{enumerate}
  \renewcommand{\labelenumi}{\textbf{(\alph{enumi})}}
    \item $\Im[\overline{r}\ell_{n,n-1}+[(n+1)\overline{r}+iu]\overline{\alpha}_{n}\alpha_{n-1}]=-\Re(ur)$,
    \item $\Im[(n\overline{r}+iu)\ell_{n,n-1}]=-n\Re(ur)$,
    \item$n\Im[((n+1)r+\overline{iu})\overline{\ell}_{n+1,n}]-(n+1)\Im[(nr+\overline{iu})\overline{\ell}_{n,n-1}]=0$. 
\end{enumerate}
\end{lemma}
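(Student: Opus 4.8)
The plan is to mimic the derivation of Lemma \ref{value-gamma-geneweight} and Lemma \ref{lema-weight-newnew}: extract the identities by evaluating the first-order differential equations of Theorem \ref{Edo-fistorder-Exemple3-JAT-2023} at the distinguished points $z=r$ and $z=1/\overline{r}$, where the factor $(z-r)(\overline{r}z-1)$ vanishes, and by exploiting the two available expressions for $\s_{n,n}$. Concretely, for \textbf{(a)}, I would set $z=r$ and $z=1/\overline{r}$ (equivalently, the zeros of $\widehat{A}$ other than $z=0$) in the equation $z(z-r)(\overline{r}z-1)\Phi_n'(z)=\overline{r}^{-1}[\ldots]$ and use the fact that $|\Phi_n(\xi)/\Phi_n^{\ast}(\xi)|=1$ whenever $|\xi|=1$; since $|r|\neq 1$ the two zeros $r$ and $1/\overline{r}$ are not on $\mathbb{T}$, so instead one must use the para-orthogonality-type argument at $z=1$ (and possibly $z=-1$, or another convenient unimodular point) exactly as in the proof of Lemma \ref{value-gamma-geneweight}. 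Setting $z$ equal to a point on $\mathbb{T}$ in Theorem \ref{Edo-fistorder-Exemple3-JAT-2023} and squaring moduli yields $|U_n(\xi)|^2=|V_n(\xi)|^2$; subtracting the relation obtained this way at two symmetric unimodular points (or taking real/imaginary parts) isolates a bilinear relation in $\ell_{n+1,n}$, $\overline{\alpha}_n\alpha_{n-1}$, and the data $r,u$, and the two formulas for $\overline{r}\s_{n,n}$ in the text then collapse this to identity \textbf{(a)}.

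For \textbf{(b)}, I would iterate or shift the index in \textbf{(a)} together with the coefficient recurrences \eqref{prop-coef-n-1}, in particular $\ell_{n,n-1}=\ell_{n-1,n-2}+\overline{\alpha}_{n-1}\alpha_{n-2}$, which lets one telescope the $\overline{\alpha}_{n}\alpha_{n-1}$ terms from \textbf{(a)} written for successive $n$ into the single term $(n\overline{r}+iu)\ell_{n,n-1}$; taking imaginary parts and using that $\Im[\overline{r}\,\overline{\alpha}_k\alpha_{k-1}\cdot(\text{real})]$ terms cancel in the telescoping sum reduces everything to $-n\Re(ur)$. Alternatively, \textbf{(b)} should follow directly by multiplying \textbf{(a)} through and recognizing $(n\overline{r}+iu)\ell_{n,n-1}=\overline{r}\ell_{n,n-1}+[(n+1)\overline{r}+iu]\overline{\alpha}_n\alpha_{n-1}\cdot(\ldots)$ after absorbing the $\overline{\alpha}_n\alpha_{n-1}$ contribution via the Verblunsky relation — I would check which route is shorter once the explicit form of \textbf{(a)} is in hand. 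Finally, \textbf{(c)} is a ``symmetrized'' consequence: write \textbf{(b)} (or its conjugate) at level $n+1$ and at level $n$, multiply the first by $n$ and the second by $n+1$, and subtract; the $\Re(ur)$ terms, being $-n(n+1)\Re(ur)$ in both, cancel identically, leaving exactly the combination in \textbf{(c)}. One must be slightly careful that the quantity appearing in \textbf{(c)} is $\overline{\ell}$ rather than $\ell$, so I would apply conjugation to \textbf{(b)} first, using $\overline{\Im(w)}=-\Im(\overline{w})$ and $\overline{\Re(ur)}=\Re(\overline{u}\,\overline{r})$, and track signs accordingly.

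The main obstacle I anticipate is purely bookkeeping: the weight \eqref{Exemplo3-JAT-2013} has genuinely complex parameters $r$ and $u$, so unlike Example 3 one cannot reduce to real coefficients, and the $|U_n|^2=|V_n|^2$ identity at a unimodular point produces a long expression whose real and imaginary parts must be disentangled carefully; getting \textbf{(a)} into precisely the stated normalized form (with the specific coefficient $[(n+1)\overline{r}+iu]$ rather than some equivalent variant) will require patient use of both expressions for $\s_{n,n}$ and of $\s_{n,n+1}=n$. Once \textbf{(a)} is established in the stated form, parts \textbf{(b)} and \textbf{(c)} are routine algebraic manipulations with the recurrence \eqref{prop-coef-n-1} and index shifts, with no further conceptual input needed.
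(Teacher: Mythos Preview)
Your plan for \textbf{(a)} has a genuine obstruction: in Examples 2 and 3 the zeros of $A$ lie on $\mathbb{T}$, so evaluating the first differential equation there kills the left-hand side and yields $|U_n|=|V_n|$ via $|\tau_n|=1$. Here $A(z)=(z-r)(z-1/\overline{r})$ has no zero on $\mathbb{T}$, and at a generic unimodular $\xi$ the term $\xi(\xi-r)(\overline{r}\xi-1)\Phi_n'(\xi)$ does \emph{not} vanish, so you do not obtain $|U_n(\xi)|^2=|V_n(\xi)|^2$. The paper bypasses this entirely: \textbf{(a)} follows in one line by equating the two displayed expressions for $\overline{r}\s_{n,n}$ (from Theorem~\ref{Thm1-main} and Corollary~\ref{Coro1-formula}) and taking imaginary parts, after rewriting $\ell_{n+1,n}=\ell_{n,n-1}+\overline{\alpha}_n\alpha_{n-1}$. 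You mention the two $\s_{n,n}$ formulas only as a finishing step, but they are the whole argument for \textbf{(a)}; no evaluation at points is needed or helpful.

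For \textbf{(b)}, your telescoping idea is correct and in fact more self-contained than the paper's route. Writing the $\s_{n,n}$ identity in the equivalent form $\Im[\overline{r}\ell_{n+1,n}+(n\overline{r}+iu)\overline{\alpha}_n\alpha_{n-1}]=-\Re(ur)$ and substituting $\overline{\alpha}_n\alpha_{n-1}=\ell_{n+1,n}-\ell_{n,n-1}$ gives immediately
\[
\Im[((n+1)\overline{r}+iu)\ell_{n+1,n}]-\Im[(n\overline{r}+iu)\ell_{n,n-1}]=-\Re(ur),
\]
which telescopes to \textbf{(b)} from $\ell_{0,-1}=0$. The paper reaches the same recursive step by a longer detour: it imports a difference equation for the Verblunsky coefficients from \cite{Bracciali-Rampazzi-SRibeiro-JAT2023}, multiplies it by $\overline{\alpha}_{n-1}$, combines with \textbf{(a)}, and then uses the $\ell$-recurrence. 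Your plan for \textbf{(c)} matches the paper's exactly: conjugate \textbf{(b)}, apply it at levels $n$ and $n+1$, and take the combination $n\cdot(\,)_{n+1}-(n+1)\cdot(\,)_n$ so that the $\Re(ur)$ terms cancel.
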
 

\begin{proof}Item (a) follows by comparing the two expressions for $\s_{n,n}$. 
Consider the difference equation (see, p.24 \cite{Bracciali-Rampazzi-SRibeiro-JAT2023}), for $n \geqslant 2$,
\begin{equation} \label{diference_equ_Ex3_JAT-2023_1}
[(n+1) r +  \overline{iu}]\alpha_n +[(n-1) \overline{r} + ui]  \alpha_{n-2} = \left[n(|r|^2 + 1) + 2\Re( r \overline{\ell}_{n,n-1})\right] \frac{\alpha_{n-1}}{1-|\alpha_{n-1}|^2}. 
\end{equation}
We multiply \eqref{diference_equ_Ex3_JAT-2023_1} by $\overline{\alpha}_{n-1}$ and simplify the outcome to get
\begin{align*}
\Im\{-\overline{r}\ell_{n,n-1}-[(n+1) \overline{r} +  {iu}]\overline{\alpha}_n{\alpha}_{n-1} +[(n-1) \overline{r} + ui]  \alpha_{n-2}\overline{\alpha}_{n-1}+\overline{r}\ell_{n,n-1}\} = 0. 
\end{align*}
From item (a) we obtain
$\Re(ur)+\Im\{[(n-1) \overline{r} + ui]  \alpha_{n-2}\overline{\alpha}_{n-1}+\overline{r}\ell_{n,n-1}\} = 0.$
From \eqref{prop-coef-n-1}, by replacing $\overline{\alpha}_{n-1}\alpha_{n-2}=\ell_{n,n-1}-\ell_{n-1,n-1}$, we have 
$\Im[(n\overline{r}+iu)\ell_{n,n-1}]-\Im\{[(n-1)\overline{r}+iu]\ell_{n-1,n-2}\}=-\Re(ur)$. We use this identity recursively with $\ell_{0,-1}=0$ to obtain (b). Finally, (c) follows from (b) for $n$ and $n+1$. 
\end{proof}

\begin{corollary}
\label{Thm-aplic-medida-exemplo3-JAT} Let $\Phi_n$ be the MOPUC with respect the weight function \eqref{Exemplo3-JAT-2013}. Then
 \begin{align*}
      z(z&-r)(\overline{r}z-1)  \Phi_{n}''(z) \\
     & +\{[\overline{r}(3-n)+iu]z^2+[(n-2)(1+|r|^2)-2\Re(ur)i]z-r(n-1)-\overline{iu}\}\Phi_n'(z) \\
     & -\{ n(iu+2\overline{r})z - [\overline{r}(n+1)+iu]\ell_{n+1,n}+n[2\Re(ur)i-1-|r|^2]
     \}\Phi_n(z) \\
     & = -[\overline{r}(n+1)+iu]\overline{\alpha}_n\Phi_n^{\ast}(z). 
 \end{align*}
\end{corollary}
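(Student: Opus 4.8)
The plan is to specialize the general second-order differential equation of Theorem \ref{Thm3-second-order-1} to the weight function \eqref{Exemplo3-JAT-2013}, using the explicit first-order data already assembled in the proof of Theorem \ref{Edo-fistorder-Exemple3-JAT-2023}. Recall from there that, up to the common factor arising from $r\widehat A(z)=z(z-r)(\overline r z-1)$, we have $\overline r U_n(z)$, $\overline r V_n(z)$, $rW_n(z)$ and $rY_n(z)$ written explicitly as polynomials of degree at most two in $z$ with coefficients built from $r,u,n,\ell_{n+1,n},\alpha_n,\alpha_{n-1}$. The first step is simply to read off that here $V_n(z)$ has degree one, so $V_n'(z)=-[\overline r(n+1)+iu]\overline\alpha_n/\overline r$ is the constant that will sit on the right-hand side against $\Phi_n^{\ast}(z)$, producing the stated $-[\overline r(n+1)+iu]\overline\alpha_n\Phi_n^{\ast}(z)$ after clearing denominators.

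Next I would compute the two structural pieces that Theorem \ref{Thm3-second-order-1} requires: the coefficient of $\Phi_n'(z)$, namely $[zA(z)]'-U_n(z)+zA(z)Y_n(z)[\widehat A(z)]^{-1}$, and the coefficient of $\Phi_n(z)$, namely $[Y_n(z)U_n(z)-V_n(z)W_n(z)][\widehat A(z)]^{-1}+U_n'(z)$. For the $\Phi_n'$ coefficient, since $zA(z)=z(z-r)(z^2-1)$ is not the relevant product here — rather $zA(z)$ with $A(z)=(z-r)(z-1/\overline r)$, so $zA(z)[\widehat A(z)]^{-1}=z\cdot r$ after cancelling $(z-r)(z-1/\overline r)$ — the term $zA(z)Y_n(z)[\widehat A(z)]^{-1}$ collapses to $rz\cdot Y_n(z)/$(the leftover), and combining $[zA(z)]'$ with $-U_n(z)$ and this term gives, after multiplying through by $z(z-r)(\overline r z-1)$ and by $\overline r$, the quadratic $[\overline r(3-n)+iu]z^2+[(n-2)(1+|r|^2)-2\Re(ur)i]z-r(n-1)-\overline{iu}$ claimed in the statement. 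This is a direct but somewhat lengthy polynomial identification; the imaginary-part identities of Lemma \ref{identi-exem3-jaat2023}, especially item (a), are what make the middle coefficient come out in the clean form involving $\Re(ur)$.

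The genuinely delicate step is the $\Phi_n(z)$-coefficient, which hinges on simplifying $Y_n(z)U_n(z)-V_n(z)W_n(z)$. The plan is to show this product is divisible by $\widehat A(z)$ — equivalently by $z(z-r)(\overline r z-1)/r$ — leaving a quotient that is only linear in $z$; this is the analogue of the simplifications carried out in Corollaries \ref{corollary-GBessel-2} and \ref{Thm-EDO-second-order-general-weight1}, where a quadratic $Y_nU_n-V_nW_n$ factored against $zA(z)$. Here I expect $r(Y_n U_n - V_n W_n)/[z(z-r)(\overline r z-1)]$ to equal, up to sign and the $\overline r$-normalization, $n(iu+2\overline r)z-[\overline r(n+1)+iu]\ell_{n+1,n}+n[2\Re(ur)i-1-|r|^2]$, after which adding $U_n'(z)$ (a constant here, since $U_n$ is quadratic — actually $U_n'$ contributes the $2\overline r n z$ piece) and multiplying by the common denominator yields the displayed coefficient. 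The main obstacle, and the part requiring the most care, is verifying this exact divisibility and the precise form of the linear quotient: it is not automatic from degree counting alone and relies on the difference equation \eqref{diference_equ_Ex3_JAT-2023_1} together with items (b) and (c) of Lemma \ref{identi-exem3-jaat2023} to cancel the spurious terms — in particular item (c) is tailored to force the coefficient of $z$ in the quotient to be exactly $n(iu+2\overline r)+2\overline r n$ rather than an expression still carrying $\overline\ell$-terms. Once these cancellations are confirmed, the corollary follows immediately by substituting into Theorem \ref{Thm3-second-order-1} and clearing the denominator $[\widehat A(z)]^{-1}=r/[z(z-r)(\overline r z-1)]$.
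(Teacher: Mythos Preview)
Your overall plan --- specialize Theorem~\ref{Thm3-second-order-1} using the polynomials $U_n,V_n,W_n,Y_n$ from Theorem~\ref{Edo-fistorder-Exemple3-JAT-2023} --- is the same as the paper's. The $\Phi_n'$-coefficient does come out as stated (a small correction: $zA(z)/\widehat A(z)$ is the \emph{constant} $r/\overline r$, not something with a residual factor of $z$; here $\widehat A(z)=\tfrac{z}{|r|^2}(\overline r)^2 A(z)$, so the ratio is $|r|^2/\overline r^2=r/\overline r$), and only Lemma~\ref{identi-exem3-jaat2023}(a) is needed for that part.

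The gap is in the $\Phi_n$-coefficient. You assert that $Y_nU_n-V_nW_n$ is \emph{exactly divisible} by $\widehat A(z)$, and that this divisibility will be forced by the difference equation together with items (b) and (c) of Lemma~\ref{identi-exem3-jaat2023}. The paper does \emph{not} obtain divisibility this way, and in fact never invokes item~(c). What the paper does is: use items (a) and (b) to simplify $Y_nU_n-V_nW_n$ down to the form
\[
\frac{Y_nU_n-V_nW_n}{\widehat A}=\frac{1}{\overline r}\Big[\text{(linear polynomial in }z)\Big]+\frac{\mathfrak r_n\,z}{(\overline r)^2 A(z)},
\]
with a single undetermined scalar remainder $\mathfrak r_n$ (a complicated combination of $|\alpha_n|^2$, $|\alpha_{n-1}|^2$, $|\ell_{n+1,n}\overline r+(\overline r n+iu)\overline\alpha_n\alpha_{n-1}|^2$, etc.). The identities in Lemma~\ref{identi-exem3-jaat2023} are all imaginary-part statements and do not obviously kill this real combination. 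The paper then disposes of $\mathfrak r_n$ by an \emph{analytic} argument: once the remainder is carried into the differential equation, the left-hand side is a polynomial while the right-hand side is $\mathfrak r_n\,z\,\Phi_n(z)/[\overline r A(z)]$; since $A(z)$ has a root outside the closed unit disk and all zeros of $\Phi_n$ lie strictly inside, the right-hand side would have a genuine pole there unless $\mathfrak r_n=0$.

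This zero-location step is the missing idea in your proposal. Your plan to replace it by Lemma~\ref{identi-exem3-jaat2023}(c) is not substantiated --- item~(c) relates $\Im[(\cdot)\overline\ell_{n+1,n}]$ to $\Im[(\cdot)\overline\ell_{n,n-1}]$, which is not the structure of $\mathfrak r_n$ --- and the paper does not use (c) here at all. Without either a verified algebraic proof that $\mathfrak r_n=0$ or the analytic argument above, the proof is incomplete.
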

\begin{proof}Lemma \ref{identi-exem3-jaat2023}, item (a), can be used to write
\begin{align*}
    Y_{n}(z)U_{n}(z)&-V_{n}(z)W_{n}(z) \\  
    &=\frac{z}{|r|^2} \times  
    \Big\{
    iun\overline{r}z^3-\{
iun(1+|r|^2)+\overline{r}\ell_{n,n-1}(\overline{r}n+iu)+\overline{r}n2i\Re(ur)\}z^2 \\
    & + \{n(1+|r|^2)[r\overline{\ell}_{n+1,n}+(\overline{iu}+rn)\alpha_{n}\overline{\alpha}_{n-1}]+iunr+|\overline{r}\ell_{n+1,n}+(\overline{r}n+iu)\overline{\alpha}_n\alpha_{n-1}|^2 \\
    &-|\overline{r}(n+1)+iu|^2|\alpha_n|^2-|\overline{r}n+iu|^2|\alpha_{n-1}|^2+|u|^2\}z-r(nr+\overline{iu})\overline{\ell}_{n,n-1} \Big\}. 
\end{align*}
Notice that 
    \begin{align*}
        \widehat{A}(z)=\frac{z}{|r|^2}\times (\overline{r})^2A(z). 
    \end{align*}
By defining
  \begin{align*}
      \mathfrak{r}_{n}  =& n(1+|r|^2)[r\overline{\ell}_{n+1,n}+(rn+\overline{iu})\alpha_n\overline{\alpha}_{n-1}] + |\overline{r}\ell_{n+1,n}+(\overline{r}n+iu)\overline{\alpha}_n\alpha_{n-1}|^2 \\
          & -|\overline{r}(n+1)+iu|^2|\alpha_n|^2-|\overline{r}n+iu|^2|\alpha_{n-1}|^2 +|u|^2\\
          &-(|r|^2+1)[\ell_{n,n-1}(\overline{r}n+iu)+2ni\Re(ur)], 
  \end{align*}
one can write 
  \begin{align*}
  Y_{n}(z)U_{n}(z)- V_{n}(z)&W_{n}(z)  \\ 
    & = \frac{z}{|r|^2}\bigg\{\overline{r}A(z)[iunz - \ell_{n,n-1}(\overline{r}n+iu)+i2n\Re(ur)]+ \\
  &\mathfrak{r}_{n}z + r\{\ell_{n,n-1}(\overline{r}n+iu)-\overline{\ell}_{n,n-1}(rn+\overline{iu})+2ni\Re(ur)\}\bigg\}. 
  \end{align*}
From Lemma \ref{identi-exem3-jaat2023}, item (b), we get
  \begin{align*}
      \frac{Y_{n}(z)U_{n}(z)-V_{n}(z)W_{n}(z)}{\widehat{A}(z)} = \frac{1}{(\overline{r})^2}\times
      \Big\{\overline{r}[iunz - \ell_{n,n-1}(\overline{r}n+iu)+i2n\Re(ur)]
      +\frac{\mathfrak{r}_n z}{A(z)}\Big\}. 
  \end{align*}
Therefore, from Theorem \ref{Thm3-second-order-1}, we obtain
\begin{align*}
      z(z&-r)(\overline{r}z-1)  \Phi_{n}''(z) \\
     & +\{[\overline{r}(3-n)+iu]z^2+[(n-2)(1+|r|^2)-2\Re(ur)i]z-r(n-1)-\overline{iu}\}\Phi_n'(z) \\
     & -\{ n(iu+2\overline{r})z - [\overline{r}(n+1)+iu]\ell_{n+1,n}+n[2i\Re(ur)-1-|r|^2]
     \}\Phi_n(z) \\
     &+[\overline{r}(n+1)+iu]\overline{\alpha}_n\Phi_n^{\ast}(z) = \frac{\mathfrak{r}_nz}{\overline{r}A(z)}\Phi_n(z). 
 \end{align*}
Since the polynomial $A(z)$ has one zero outside the unit disc and all zeros of $\Phi_n$ are inside the unit disk, we conclude that $\mathfrak{r}_n=0$. 
\end{proof}

\section{Final remarks and numerical simulations}
\label{final-section}

In this work we provided explicit first and second order differential equations for semiclassical MOPUC belonging to the class $(p,q)$, with $p\leqslant 3$ and $q\leqslant 3$. As a consequence, we obtained difference equations for the Verblunsky coefficients.
It is noteworthy that the Verblunsky coefficients are explicitly known only for a few MOPUC, as some examples discussed in Section  \ref{Example-ranga-WF}  and Subsection \ref{Subsec-JP}. Therefore, discrete equations are interesting tools to study properties of such coefficients, and in what follows we show how our results can be used to generate the coefficients numerically.

Let us consider the case discussed in Section \ref{Subsec-42-example2}.
for $w(\theta)=e^{t\cos(\theta)}e^{-\eta\theta}[\sin^2(\theta/2)]^\lambda$. For this measure, if $\eta=0$, the Verblunsky coefficients are real and satisfy the discrete Painlevé II equation \eqref{tipo-painleve-absolute-value}, which therefore can be used to generate the $\alpha_n$ when suitable initial conditions are provided.  In fact, let  $\mu_{n}=\int_{0}^{2\pi}e^{-in\theta}w(\theta)d\theta$ be the measure $n$-th moment.  Applying Heine's formula \cite{Simon2005} we obtain the initial conditions
\begin{align*}
    \alpha_{0}={\mu}_{1}/{\mu}_{0} , \ \mbox{and} \ \alpha_{1}=[{\mu}_{0}{\mu}_{2}-({\mu}_{1})^2]/({\mu}_{0}^{2}-\mu_{1}\overline{\mu}_{1}),
\end{align*}
and, for $n\geqslant 2$, the $\alpha_{n}$ are given recursively by
\begin{equation}\label{form-numerica}
    \alpha_{n}=-\frac{2}{t}\frac{[\lambda+(\lambda+n)\alpha_{n-1}]}{1-\alpha_{n-1}^2}-\alpha_{n-2}.
\end{equation}
In the Figure \ref{fig1a}, we present the Verblunsky coefficients obtained from \eqref{form-numerica}. We used as parameters $\lambda=0$, $\lambda=5$, $\lambda=10$, with $t=10$ fixed, and generated $\alpha_n$ using the software Wolfram Mathematica. 

A similar idea can be used for different discrete equations. For example, consider using the discrete equation from Theorem \ref{Theo-verb-new-new} to generate $\alpha_n$ for $n\geqslant 3$. If $\eta\neq 0$, the Verblunsky coefficients are complex. Again we use Heine's formula to find the initial conditions $\alpha_0$, $\alpha_{1}$, and
\begin{align*}
\alpha_{2}=\left|
\begin{array}{ccc}
\mu_1 & \mu_2 & \mu_3 \\
\mu_0 & \mu_1 & \mu_2 \\
\overline{\mu}_1 & \mu_0 & \mu_1 \\
\end{array}
\right|\left|
\begin{array}{ccc}
\mu_0 & \overline{\mu}_1 & \overline{\mu}_2 \\
\mu_1 & \mu_0 & \overline{\mu}_1 \\
\mu_2 & \mu_1 & \mu_0 \\
\end{array}
\right|^{-1}.
\end{align*}
The Figure \ref{fig1b} shows the behavior of $|\alpha_n|$ for $\eta=0$, $\eta=5$, and $\eta=10$ with $t=10$, $\lambda=5$. As expected, in both simulations we obtain $|\alpha_n|<1$ for $n=0,1,\ldots,30$. 

The drawback of using Heine's formula instead of discrete equation is that in order to find the value of $\alpha_n$ one needs to compute the moments $\mu_{0},\mu_{1},\ldots,\mu_{n+1}$ and to calculate two determinants of $(n+1)\times (n+1)$ matrices.

\begin{figure}[H]
    \centering
      \begin{minipage}[b]{0.49\linewidth}
        \centering
        \includegraphics[width=\linewidth]{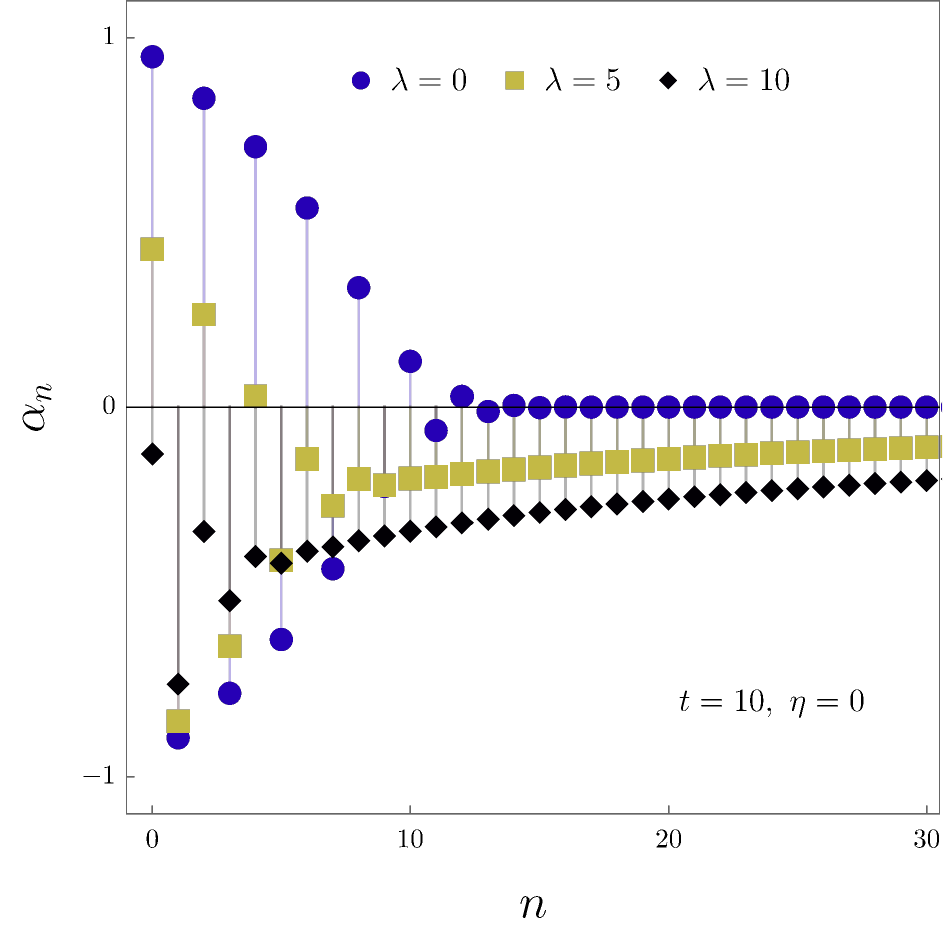} 
         \subcaption{Simulations of $\alpha_n$ using discrete equation \eqref{form-numerica}.} 
        \label{fig1a}
    \end{minipage}
    \hfill
       \begin{minipage}[b]{0.48\linewidth}
        \centering
    \includegraphics[width=\linewidth]{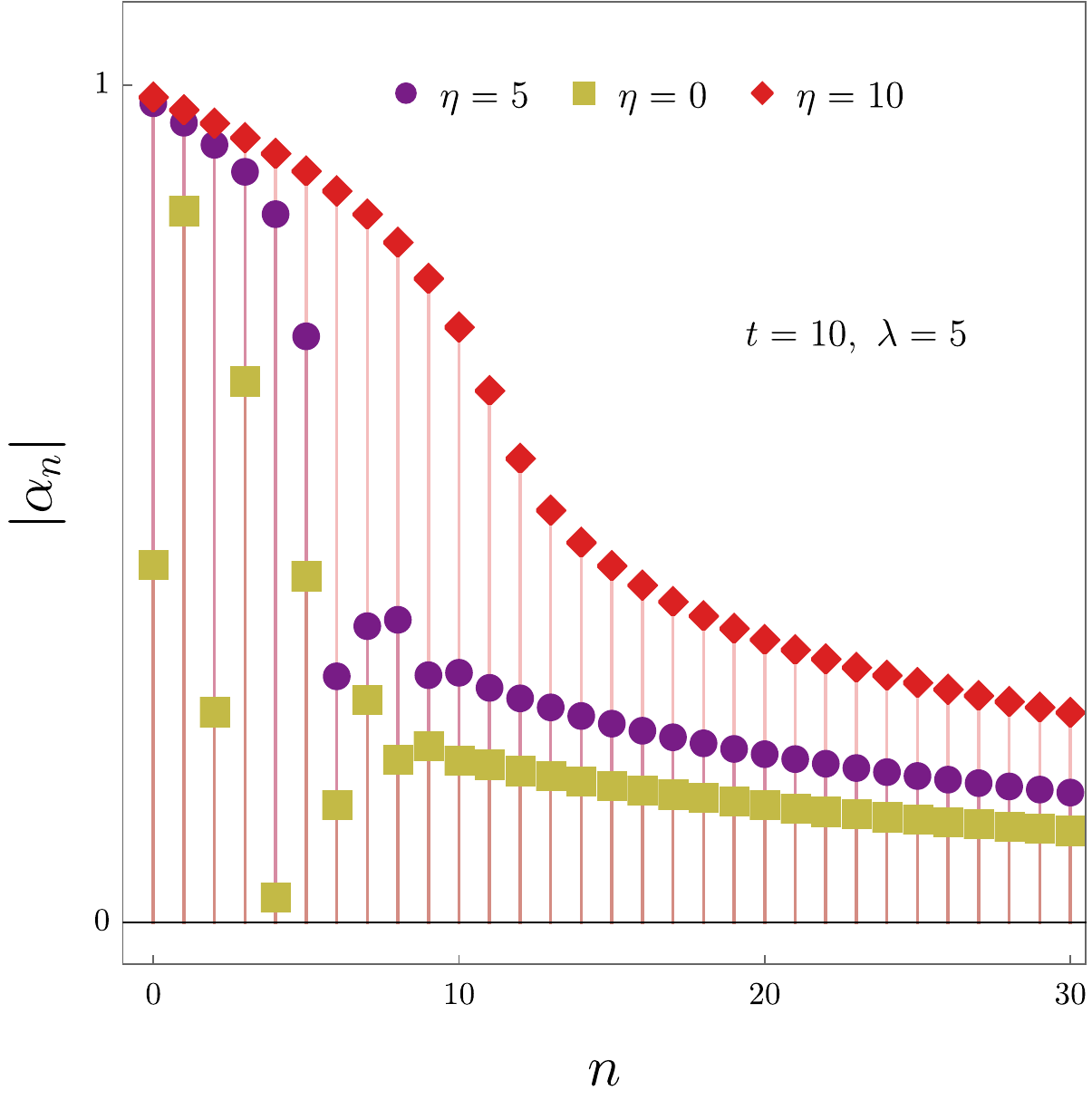}
     \subcaption{Simulations of $|\alpha_n|$ using Theorem \ref{Theo-verb-new-new}.}
    \label{fig1b}
    \end{minipage} 
   \caption{Verblunsky coefficients.}
\end{figure}

\section*{Acknowledgments}
This work was supported by Fundação de Amparo à Pesquisa do Estado de Minas Gerais (FAPEMIG), Brazil (grant number APQ-01574-24).

 \bibliographystyle{plain} 
 \bibliography{refs.bib}

\end{document}